\DeclareFontFamily{OT1}{rsfs}{}
\DeclareFontShape{OT1}{rsfs}{n}{it}{<-> rsfs10}{}
\DeclareMathAlphabet{\curly}{OT1}{rsfs}{n}{it}
\newcommand{\eqnum}{\refstepcounter{equation}\textup{\tagform@{\theequation}}}
\renewcommand\;{\hspace{.6pt}}
\newcommand\PP{\mathbb P}
\newcommand\LL{\mathbb L}
\newcommand\C{\mathbb C}
\newcommand\N{\mathbb Z}
\newcommand\Z{\mathbb Z}
\newcommand\cO{\mathcal O}
\newcommand\cA{\mathcal A}
\renewcommand\cD{\mathcal D}
\newcommand\cE{\mathcal E}
\newcommand\cF{\mathcal F}
\newcommand\cI{\mathcal I}
\renewcommand\cL{\mathcal L}
\newcommand\cZ{\mathcal Z}
\newcommand{\so}{\ \ext@arrow 0359\Rightarrowfill@{}{\hspace{3mm}}\ }
\newcommand{\rt}[1]{\xrightarrow{\ #1\ }}
\newcommand\To{\longrightarrow}
\newcommand\into{\hookrightarrow}
\newcommand\INTO{\ \ar@{^(->}[r]<-.2ex>}
\newcommand{\Into}{\ensuremath{\lhook\joinrel\relbar\joinrel\rightarrow}}
\renewcommand\_{^{}_}
\newcommand{\mat}[4]{\left(\begin{array}{cc} \!\!#1 & #2\!\! \\ \!\!#3 & #4\!\!\end{array}\right)}
\newfont{\bigtimesfont}{cmsy10 scaled \magstep5}
\newcommand{\bigtimes}{\mathop{\lower0.9ex\hbox{\bigtimesfont\symbol2}}}
\renewcommand\={\ =\ }
\newcommand\udot{^{\bullet}}
\newcommand{\wwedge}{\mbox{\Large $\wedge$}}
\newcommand\Gr{\operatorname{Gr}}
\newcommand\AJ{\operatorname{AJ}}
\newcommand\rk{\operatorname{rank}}
\newcommand\vir{\operatorname{vir}}
\newcommand\vd{\operatorname{vd}}
\newcommand\coker{\operatorname{coker}}
\newcommand\id{\operatorname{id}}
\newcommand\Hom{\operatorname{Hom}}
\renewcommand\hom{\curly H\!om}
\newcommand\End{\operatorname{End}}
\newcommand\Ext{\operatorname{Ext}}
\newcommand\ext{\curly E\!\;xt}
\newcommand\Pic{\operatorname{Pic}}
\newcommand\Jac{\operatorname{Jac}}
\newcommand\Cone{\operatorname{Cone}}
\newcommand\beq[1]{\begin{equation}\label{#1}}
\newcommand\eeq{\end{equation}}
\newcommand\beqa{\begin{eqnarray*}}
\newcommand\eeqa{\end{eqnarray*}}
\newcommand\arXiv[1]{\href{http://arxiv.org/abs/#1}{arXiv:#1}}
\newcommand\mathAG[1]{\href{http://arxiv.org/abs/math/#1}{math.AG/#1}}
\DeclareRobustCommand{\SkipTocEntry}[3]{}
\newcommand\@dotsep{4.5}
\def\@tocline#1#2#3#4#5#6#7{\relax
  \ifnum #1>\c@tocdepth % then omit
  \else
    \par \addpenalty\@secpenalty\addvspace{#2}%
    \begingroup \hyphenpenalty\@M
    \@ifempty{#4}{%
      \@tempdima\csname r@tocindent\number#1\endcsname\relax
    }{%
      \@tempdima#4\relax
    }%
    \parindent\z@ \leftskip#3\relax \advance\leftskip\@tempdima\relax
    \rightskip\@pnumwidth plus1em \parfillskip-\@pnumwidth
    #5\leavevmode #6\relax
    \leaders\hbox{$\m@th
      \mkern \@dotsep mu\hbox{.}\mkern \@dotsep mu$}\hfill
    \hbox to\@pnumwidth{\@tocpagenum{#7}}\par
    \nobreak
    \endgroup
  \fi}
\makeatletter \@addtoreset{equation}{section} \makeatother
\renewcommand{\theequation}{\thesection.\arabic{equation}}
\newtheorem{thm}[equation]{Theorem}
\newtheorem{thm*}{Theorem}
\newtheorem{lem}[equation]{Lemma}
\newtheorem{cor}[equation]{Corollary}
\newtheorem{prop}[equation]{Proposition}
\newenvironment{rmk}{\noindent\textbf{Remark}.}{\medskip}
\newenvironment{rmks}{\noindent\textbf{Remarks}.}{\medskip}
\title{Degeneracy loci, virtual cycles and nested Hilbert schemes I}
\author{Amin Gholampour and Richard P. Thomas}
\begin{document}
\maketitle
\begin{abstract} \noindent
Given a map of vector bundles on a smooth variety, consider the deepest degeneracy locus where its rank is smallest. We show it carries a natural perfect obstruction theory whose virtual cycle can be calculated by the Thom-Porteous formula.

We show nested Hilbert schemes of points on surfaces can be expressed as degeneracy loci. We show how to modify the resulting obstruction theories to recover the virtual cycles of Vafa-Witten and reduced local DT theories.

The result computes some Vafa-Witten invariants in terms of Carlsson-Okounkov operators. This proves and extends a conjecture of Gholampour-Sheshmani-Yau and generalises a vanishing result of Carlsson-Okounkov.
\end{abstract}
\tableofcontents \vspace{-9mm}

%%%%%%%%%%%%%%%%%%%%%%%%%%%%%%%%%%%%%%%%%%%%%%%%%%%%%%%%%%%%%%%%%%%%%%%%%%%

\section{Introduction}
The prototype of a scheme $Z$ with \emph{perfect obstruction theory} \cite{BF} is the zero locus of a section of a vector bundle $E$ on a smooth ambient variety $A$. We recall the construction in the next Section.

\emph{All} perfect obstruction theories are \emph{locally} of this form. In the rare situations where this is also true \emph{globally}, the natural virtual cycle \cite{BF} pushes forward to what  we might expect, namely the Euler class of the bundle:
\beq{Euler}
\iota_*[Z]^{\vir}\=c_r(E)\ \in\ A_{\vd}(A).
\eeq
Here $\iota\colon Z\into A$ is the inclusion, $r=\rk E,\ \vd=\dim A-r$ is the virtual dimension of the problem, and $[Z]^{\vir}$ lies in $A_{\vd}(Z)$ or $H_{2\!\vd}(Z)$.

\eqref{Euler} can help in computing integrals over the virtual cycle. Examples include the computation of the number 27 of lines on a cubic surface, numbers of lines and conics on quintic threefolds, and the quantum hyperplane principle. A more relevant example to us is the reduced stable pair computations in \cite{KT}, carried out by writing the moduli space of stable pairs (and its reduced perfect obstruction theory) as the zero locus of a section of a tautological bundle over a certain Hilbert scheme.\medskip

In this paper we study a generalisation of zero loci, namely degeneracy loci. We show these give another prototype of a perfect obstruction theory.\footnote{In fact we prove this by reducing to the model \eqref{model} in a bigger ambient space.}
Again, when this can be done \emph{globally}, it allows us to express integrals over the virtual cycle in terms of integrals over the ambient space, via the Thom-Porteous formula.

So fix a two term complex of vector bundles $E_\bullet=\{E_0\rt\sigma E_1\}$ on a smooth ambient space $A$. Set $n=\dim A,\ r_i=\rk(E_i)$, and denote the $r$th degeneracy locus by
$$
Z_r\ :=\ \big\{x\in A\colon\rk(\sigma|_x)\le r\}.
$$
We work with the smallest $r$ for which $Z:=Z_r$ is nonempty. Our first result is the following, made more precise in Theorem \ref{prop}.

\begin{thm*} Assume $Z_{r-1}=\emptyset$. Then both $h^0(E_\bullet|_Z)=\ker(\sigma|_Z)$ and $h^1(E_\bullet|_Z)=\coker(\sigma|_Z)$ are locally free on $Z:=Z_r$, which inherits a perfect obstruction theory
$$
\Big\{h^1(E_\bullet|_Z)^*\otimes h^0(E_\bullet|_Z)\To\Omega_A|\_Z\Big\}\To\LL_Z.
$$
The push forward of the resulting virtual cycle $[Z]^{\vir}\in A_{n-k}(Z)$ to $A$ is given by the Thom-Porteous formula
$$
\Delta\;_{r_1-r}^{r_0-r}\big(c(E_1-E_0)\big)\ \in\ A_{n-k}(A),
$$
where $k=(r_0-r)(r_1-r)$ and $\Delta^a_b(c):=\det\!\big(c_{b+j-i}\big)_{1\le i,j\le a}\,.$
\end{thm*}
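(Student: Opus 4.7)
The plan is to realise $Z$ as the zero locus of a tautological section of a vector bundle on a Grassmann bundle over $A$, thereby reducing to the model situation \eqref{Euler}, and then to descend the resulting POT and virtual cycle back to $A$. Let $\pi\colon \Gr:=\Gr(r_0-r,E_0)\To A$ be the Grassmann bundle of $(r_0-r)$-dimensional subspaces in fibres of $E_0$, with tautological exact sequence $0\to S\to\pi^*E_0\to Q\to 0$, and let $s$ be the section of $\cH:=S^*\otimes\pi^*E_1$ corresponding to the composition $S\into\pi^*E_0\rt{\pi^*\sigma}\pi^*E_1$. Its zero scheme parametrises pairs $(x,V)$ with $V\subseteq\ker\sigma|_x$. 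The hypothesis $Z_{r-1}=\emptyset$ forces $\sigma$ to have constant rank $r$ on $Z$, so $K:=\ker(\sigma|_Z)$ and $C:=\coker(\sigma|_Z)$ are locally free (in a local trivialisation $\sigma$ is conjugate to $\operatorname{diag}(I_r,0)$); this gives the first claim. It also means $V$ is uniquely recovered as $K_x$, so $\pi$ restricts to an isomorphism $Z(s)\rt{\sim}Z$, and $Z$ inherits the zero-locus POT
$$
\big\{\cH^*|\_Z\rt{ds^\vee}\Omega_\Gr|\_Z\big\}\To\LL_Z.
$$

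To identify this with the POT of the statement, observe $S|_Z=K$ and $\pi^*E_0/S|_Z=\im(\sigma|_Z)=:I$. Dualising $0\to I\to E_1|_Z\to C\to 0$ and tensoring with $K$ yields
$$
0\To K\otimes C^*\To\cH^*|\_Z\To K\otimes I^*\To 0,
$$
in which $K\otimes I^*=\Omega_{\Gr/A}|\_Z$ via the standard identification $T_{\Gr/A}=S^*\otimes(\pi^*E_0/S)$. A direct computation of $ds$ in the fibre direction of $\pi$ at a zero $(x,K_x)$ returns postcomposition with $\sigma$; dually, the composition $\cH^*|\_Z\rt{ds^\vee}\Omega_\Gr|\_Z\twoheadrightarrow\Omega_{\Gr/A}|\_Z$ coincides with the surjection above. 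In particular $ds^\vee$ sends the subbundle $K\otimes C^*$ into $\Omega_A|\_Z\subset\Omega_\Gr|\_Z$, and combining with the relative cotangent sequence produces a morphism of two-term complexes
$$
\big\{K\otimes C^*\To\Omega_A|\_Z\big\}\ \To\ \big\{\cH^*|\_Z\To\Omega_\Gr|\_Z\big\}
$$
whose quotient complex $\{\Omega_{\Gr/A}|\_Z\rt{\sim}\Omega_{\Gr/A}|\_Z\}$ is acyclic. This is therefore a quasi-isomorphism, compatible with the maps to $\LL_Z$, which identifies the two POTs.

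For the virtual class, apply \eqref{Euler} to $Z(s)\subset\Gr$ to obtain $\iota_{Z,\Gr*}[Z]^{\vir}=c_{\rk(\cH)}(\cH)\cap[\Gr]$, and push forward by $\pi$. The Kempf--Laksov formula evaluates $\pi_*c_{\rk(\cH)}(S^*\otimes\pi^*E_1)$ to $\Delta_{r_1-r}^{r_0-r}\big(c(E_1-E_0)\big)$ in $A_{n-k}(A)$.

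The main obstacle is the descent in the second paragraph: one has to match the differential $ds^\vee$ on the fibre direction of $\pi$ with postcomposition by $\sigma$, and then identify the induced map $K\otimes C^*\to\Omega_A|_Z$ with the Kodaira--Spencer-type arrow dual to the natural $T_A|_Z\to\hom(K,C)$ coming from the infinitesimal variation of $\sigma$. The constant-rank hypothesis is critical throughout: without it $K$ and $C$ would not even be locally free, the exact sequences above would fail, and the argument collapses.
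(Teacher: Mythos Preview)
Your proposal is correct and follows essentially the same approach as the paper: realise $Z$ as the zero locus of the tautological section on $\Gr(r_0-r,E_0)$, identify the resulting zero-locus perfect obstruction theory with $\{h^0\otimes(h^1)^*\to\Omega_A|_Z\}$ via the relative cotangent sequence, and push forward using the Thom--Porteous/Kempf--Laksov formula. The paper is more careful about two points you flag or gloss over: it proves the scheme-theoretic isomorphism $Z(s)\cong Z$ via a separate characterisation lemma for $Z_r$ in terms of the kernel of $\sigma$, and it verifies the commutativity of the key diagram (your ``main obstacle'') by working on the scheme-theoretic doubling $2Z$ of $Z$ inside the Grassmannian rather than by a pointwise computation.
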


\smallskip\noindent\textbf{Nested Hilbert schemes.} 
Our main application is to the punctual  Hilbert schemes of \emph{nested} subschemes of a fixed projective surface $S$. Full details and notation will be described later; for now for simplicity we restrict attention to the simplest case of the 2-step nested punctual Hilbert scheme
$$
S^{[n_1,n_2]}\ :=\ \big\{I_1\subseteq I_2\subseteq\cO_S\ \colon\ \mathrm{length}\;(\cO_S/I_i)=n_i\big\}.
$$
Now $S^{[n_1,n_2]}$ lies in the ambient space $S^{[n_1]}\times S^{[n_2]}$ as the locus of points $(I_1,I_2)$ for which there is a nonzero map $\Hom_S(I_1,I_2)\ne0$. Thus it can be seen as the degeneracy locus of the complex of vector bundles
\beq{fortnite}
R\hom_\pi(\cI_1,\cI_2) \quad\mathrm{over}\quad S^{[n_1]}\times S^{[n_2]}
\eeq
which, when restricted to the point $(I_1,I_2)$, computes $\Ext^*_S(I_1,I_2)$. When $H^{0,2}(S)=0$ this complex is 2-term, so we can apply the above theory. The resulting perfect obstruction theory on $S^{[n_1,n_2]}$ agrees with that of \cite{GSY1}. In turn this arises in local DT theory \cite{GSY2}, so we can express DT integrals in terms of Chern classes of tautological sheaves over $S^{[n_1]}\times S^{[n_2]}$.

When $H^{0,1}(S)\ne0$ the result is zero; when $H^{0,2}(S)\ne0$ the theory does not apply. So for a general projective surface $S$ we modify the complex $\Ext^*_S(I_1,I_2)$ with $H^1(\cO_S)$ and $H^2(\cO_S)$ terms. The modification is canonical over $S^{[n_1,n_2]}$, recovering the \emph{reduced} version of the local DT deformation theory that arises in the $SU(r)$ Vafa-Witten theory of $S$ \cite{TT1}. \smallskip

\noindent\textbf{Splitting trick.} We would like to extend this modification over the rest of $S^{[n_1]}\times S^{[n_2]}$, so we can apply the Thom-Porteous formula. Such modifications exist locally but \emph{not} globally, so in Section \ref{splitpr} we use a trick reminiscent of the splitting principle in topology, pulling back to a certain bundle over $S^{[n_1]}\times S^{[n_2]}$ where there \emph{is} a canonical modification. This allows us to prove the following (whose notation will be explained more fully in Sections \ref{00}--\ref{kstep}, in particular \eqref{trayce}).

\begin{thm*}
Let $S$ be any smooth projective surface. The $k$-step nested Hilbert scheme $S^{[n_1,\ldots,n_k]}$ can be seen as an intersection of degeneracy loci after pulling back to an affine bundle over $S^{[n_1]}\times\cdots\times S^{[n_k]}$. The resulting perfect obstruction theory $F\udot\to\LL_{S^{[n_1,\ldots,n_k]}}$ has virtual tangent bundle
$$
(F\udot)^\vee\ \cong\ \Big\{T_{S^{[n_1]}}\oplus\cdots\oplus T_{S^{[n_k]}}\To\ext^1_p(\cI_1,\cI_2)\_0\oplus\cdots\oplus\ext^1_p(\cI_{k-1},\cI_k)\_0\Big\},
$$
the same as the one in Vafa-Witten theory \cite{TT1} or ``reduced local DT theory" \cite{GSY1, GSY2}. The virtual cycle
$$
\big[S^{[n_1,\ldots,n_k]}\big]^{\vir}\,\in\,A_{n_1+n_k}\big(S^{[n_1,\ldots,n_k]}\big)
$$
pushes forward to
\beq{CO}
c_{n_1+n_2}\big(R\hom_{\pi}(\cI_1,\cI_2)[1]\big)\cup\cdots\cup c_{n_{k-1}+n_k}\big(R\hom_{\pi}(\cI_{k-1},\cI_k)[1]\big)
\eeq
in $A_{n_1+n_k}\big(S^{[n_1]}\times\cdots\times S^{[n_k]})$.
\end{thm*}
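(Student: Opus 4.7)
The plan is to realise $S^{[n_1,\ldots,n_k]}$ as an intersection of $2$-step nested Hilbert schemes inside the ambient product $\cA:=S^{[n_1]}\times\cdots\times S^{[n_k]}$, to treat each $2$-step piece as a degeneracy locus via (a modification of) the first Theorem, and then to multiply Chern classes. The $k$-step nesting condition decouples as $I_i\subseteq I_{i+1}$ for each $i=1,\ldots,k-1$, so
$$
S^{[n_1,\ldots,n_k]}\=\bigcap_{i=1}^{k-1}\pi_{i,i+1}^{-1}\!\big(S^{[n_i,n_{i+1}]}\big)\ \subset\ \cA,
$$
where $\pi_{i,i+1}$ is the projection to the $i$th and $(i+1)$st factors. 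It thus suffices to describe each $S^{[n_i,n_{i+1}]}$ as a degeneracy locus in $S^{[n_i]}\times S^{[n_{i+1}]}$ with the right perfect obstruction theory and Thom-Porteous push-forward, and then to assemble the pieces via the compatibility of virtual cycles with transverse intersections.

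For each pair $i$, the complex $\cE_i:=R\hom_\pi(\cI_i,\cI_{i+1})$ on $S^{[n_i]}\times S^{[n_{i+1}]}$ has fibre $\Ext^\bullet_S(I_i,I_{i+1})$ over $(I_i,I_{i+1})$, and $S^{[n_i,n_{i+1}]}$ is the locus where the map $\sigma_i\colon h^0(\cE_i)\to h^1(\cE_i)$ drops rank---generically by one, via the inclusion map. When $H^{0,2}(S)\ne0$, however, $\cE_i$ has a nontrivial $h^2$ coming from its trace part, so it is not $2$-term. One can cut this down by subtracting off $H^*(\cO_S)$-contributions via the trace $\cE_i\to R\pi_*\cO_S\otimes\id$; over $S^{[n_i,n_{i+1}]}$ itself this produces a canonical $2$-term ``reduced'' complex $\cF_i$, whose associated obstruction theory is exactly that of Vafa-Witten / reduced local DT theory \cite{TT1, GSY1, GSY2}. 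The difficulty is that the splitting needed to perform this reduction does not extend canonically over all of $\cA$.

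The main technical step---and the principal obstacle---is the \emph{splitting trick} of Section \ref{splitpr}: pass to an affine bundle $p\colon\widetilde{\cA}\to\cA$ on which the required splittings exist canonically, so that on $\widetilde{\cA}$ the modified $\cF_i$ is a genuine $2$-term complex of vector bundles with the same degeneracy locus as the pull-back of $\cE_i$. Applying the first Theorem to each $\cF_i$ on $\widetilde{\cA}$ produces a perfect obstruction theory on $p^{-1}(S^{[n_i,n_{i+1}]})$ with virtual tangent bundle $T_{S^{[n_i]}}\oplus T_{S^{[n_{i+1}]}}\to\ext^1_p(\cI_i,\cI_{i+1})\_0$, and Thom-Porteous applied at the deepest stratum (rank drop $1$) yields the top Chern class
$$
c_{n_i+n_{i+1}}\big(\cF_i[1]\big)\=c_{n_i+n_{i+1}}\big(\cE_i[1]\big),
$$
the total Chern class being unchanged by the $H^*(\cO_S)$-twist since that twist differs by pull-backs of classes from a point.

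Finally, intersecting the $k-1$ transverse degeneracy conditions on $\widetilde{\cA}$ and using product-compatibility of virtual cycles combines the individual obstruction theories into the direct-sum form stated, while cup-product of Chern classes delivers the expression \eqref{CO} on $\widetilde{\cA}$. Since $p$ is an affine bundle it induces isomorphisms on Chow groups, so both the virtual cycle and its push-forward descend canonically to $S^{[n_1,\ldots,n_k]}\subset\cA$. Agreement of the descended obstruction theory with the Vafa-Witten / reduced local DT one is a pointwise check on the nested locus, where both constructions produce the canonical reduced Ext complex of the nested ideals.
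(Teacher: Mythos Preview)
Your overall architecture matches the paper's: reduce to the $2$-step case, use the splitting trick on an affine bundle to get $2$-term complexes, apply Thom--Porteous, and descend via the Chow isomorphism for affine bundles. But two of your steps are not justified as written.

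First, the phrase ``compatibility of virtual cycles with transverse intersections'' is doing too much work. There is no transversality here, and virtual cycles do not in general behave well under intersection. What the paper actually does is realise $S^{[n_1,\ldots,n_k]}$ as the \emph{zero locus of a single section} of a direct-sum bundle on a $(k-1)$-fold fibre product of relative Grassmannians over the ambient space; the Euler class of a direct sum is the product of Euler classes, which is what gives the cup product of Thom--Porteous classes. Your intersection-of-pullbacks picture is set-theoretically the same, but you need this direct-sum zero-locus model to get both the perfect obstruction theory and the Chern-class formula, not an appeal to transversality. (Relatedly, your description of the degeneracy locus as where ``$\sigma_i\colon h^0(\cE_i)\to h^1(\cE_i)$ drops rank'' is garbled: it is the map in a $2$-term locally free resolution of $\cE_i$ that drops rank.)

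Second, the comparison with the Vafa--Witten / reduced local DT obstruction theory is \emph{not} ``a pointwise check''; it is the main content of the paper's proof of this theorem. The paper carries out an explicit cone manipulation on $\big(\bigoplus_i R\hom_p(\cI_i,\cI_i)\big)_0\to\bigoplus_i R\hom_p(\cI_i,\cI_{i+1})$, splitting off $(k-1)$ copies of $Rp_*\cO$ via a specific basis of $\C^k/\C$, to identify the virtual tangent bundles. Note also that the paper explicitly does \emph{not} claim the two perfect obstruction theories $F\udot\to\LL$ agree---only that the virtual tangent bundles (hence the virtual cycles) do. Your ``pointwise'' remark would not establish even this weaker statement; you need the global K-theory argument.
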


The formula \eqref{CO} for the pushforward of the virtual class was conjectured in \cite{GSY1} for $k=2$ and proved for toric surfaces. It was also shown to be true for more general surfaces when integrated against some natural classes. The classes $c_{n_{i-1}+n_i}\big(R\hom_{\pi}(\cI_{i-1},\cI_i)[1]\big)$, considered as maps $H^*(S^{[n_{i-1}]})\to H^{*+2n_i-2n_{i-1}}(S^{[n_i]})$, are called Carlsson-Okounkov operators. Carlsson-Okounkov \cite{CO} calculate them in terms of Grojnowski-Nakajima operators, and prove vanishing of the higher Chern classes:
\beq{vanz}
c_{n_1+n_2+i}\big(R\hom_{\pi}(\cI_1,\cI_2)[1]\big)\=0, \qquad i>0,
\eeq
by showing the left hand side is a universal expression in Chern numbers of $S$, and that this universal expression vanishes for toric surfaces by a localisation computation. This gives enough relations to prove the universal expression is in fact zero.
In Section \ref{COvan} we reprove the vanishing \eqref{vanz} rather easily and geometrically using the Thom-Porteous formula, as well as the following generalisation.

\begin{thm*}
Let $S$ be any smooth projective surface. For any curve class $\beta\in H_2(S,\Z)$, any Poincar\'e line bundle $\cL\to S\times\Pic_{\beta}(S)$, and any $i>0$,
$$
c_{n_1+n_2+i}\big(\!\;R\pi_*\;\cL-R\hom_{\pi}(\cI_1,\cI_2\otimes\cL)\big)\=0 \quad\mathrm{on}\ S^{[n_1]}\times S^{[n_2]}\times\Pic_\beta(S).
$$
\end{thm*}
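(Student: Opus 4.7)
The plan is to realize the K-theory class
\[
\cE \;:=\; R\pi_*\cL \;-\; R\hom_\pi(\cI_1,\cI_2\otimes\cL)
\]
on $M:=S^{[n_1]}\times S^{[n_2]}\times\Pic_\beta(S)$ as an honest vector bundle of rank $n_1+n_2$; the desired vanishing $c_{n_1+n_2+i}(\cE)=0$ for $i>0$ then follows automatically, since Chern classes of genuine bundles vanish above their rank.

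First I would rewrite $\cE$ in $K^0(M)$ using the two natural short exact sequences on $S\times M$,
\[
0\to\cI_2\otimes\cL\to\cL\to\cL|_{\cZ_2}\to0, \qquad 0\to\cI_1\to\cO\to\cO_{\cZ_1}\to0.
\]
Applying $R\pi_*$ to the first and $R\hom_\pi(-,\cI_2\otimes\cL)$ to the second and combining yields
\[
\cE \;=\; [\cL^{[n_2]}] \;+\; [R\hom_\pi(\cO_{\cZ_1},\cI_2\otimes\cL)],
\]
where $\cL^{[n_2]}:=\pi_*(\cL|_{\cZ_2})$ is the tautological rank-$n_2$ bundle. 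A Grothendieck-Riemann-Roch computation shows the second summand has virtual rank~$n_1$.

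The content of the proof is then to realize this second summand as a genuine rank-$n_1$ vector bundle. Away from the collision locus $\{Z_1\cap Z_2\ne\emptyset\}$, where $\cI_2$ is locally free along $\cZ_1$, the derived Hom $R\hom(\cO_{Z_1},\cI_2\otimes L)$ is concentrated in degree~$2$ as the pushforward of the line bundle $(\cI_2\otimes L\otimes\omega_S^{-1})|_{Z_1}$ from $Z_1$, and gives an honest rank-$n_1$ bundle on that open locus. Over the collision locus, however, lower $\ext$'s contribute and no global bundle representative exists on $M$ directly. I would handle this via the splitting trick of Section~\ref{splitpr}: pull back to an auxiliary affine bundle $\tilde M\to M$ over which $R\hom_\pi(\cO_{\cZ_1},\cI_2\otimes\cL)$ admits a two-term locally free resolution $E_0\xrightarrow{\sigma}E_1$ whose differential is everywhere injective, so that the cokernel $E_1/E_0$ is an honest rank-$n_1$ bundle on $\tilde M$ realizing the class.

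Multiplicativity of total Chern classes then gives $c(\cE)=c(\cL^{[n_2]})\cdot c(E_1/E_0)$ on $\tilde M$, a product vanishing above $n_1+n_2$; since $\tilde M\to M$ is an affine bundle, its pullback is injective on Chow and the vanishing descends to $M$. In the Thom-Porteous language of Theorem~\ref{prop}, fiberwise injectivity of $\sigma$ means all the degeneracy loci witnessing higher Chern classes of $E_1-E_0$ are empty on $\tilde M$, so the corresponding determinantal classes vanish. The main obstacle is precisely this injectivity step: organising the $\ext^0$- and $\ext^1$-contributions of $R\hom(\cO_{Z_1},\cI_2\otimes L)$ over the collision locus, after the appropriate affine pullback, into a two-term resolution whose differential has constant maximal rank.
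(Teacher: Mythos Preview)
Your $K$-theory rewriting
\[
\cE \;=\; [\cL^{[n_2]}] \;+\; [R\hom_\pi(\cO_{\cZ_1},\cI_2\otimes\cL)]
\]
is correct, and the overall strategy of representing $\cE$ by an honest rank-$(n_1+n_2)$ bundle is a good one --- indeed the paper's closing Remark notes that this \emph{can} be done (after an affine pullback) and is carried out in the companion paper \cite{GT2}.

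However, the mechanism you propose has a genuine gap. You want the pullback of $R\hom_\pi(\cO_{\cZ_1},\cI_2\otimes\cL)$ to admit a two-term resolution with \emph{fibrewise injective} differential. But at a closed point $(Z_1,Z_2,L)$ one computes (using $\Ext^1(\cO_{Z_1},\cO_S)=0$ and the long exact sequence of $0\to I_2\to\cO\to\cO_{Z_2}\to0$) that
\[
\Ext^1(\cO_{Z_1},I_2\otimes L)\ \cong\ \Hom(\cO_{Z_1},\cO_{Z_2}\otimes L),
\]
which is nonzero precisely on the collision locus and can have arbitrarily large dimension. Pulling back along an affine bundle $\tilde M\to M$ is flat, so it preserves the cohomology sheaves of any perfect complex; in particular $h^1$ of the pulled-back complex is still nonzero over the preimage of the collision locus, and no two-term locally free model of \emph{that complex} can have everywhere-injective differential. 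The splitting trick of Section~\ref{splitpr} does something quite different: it splits off \emph{constant} (trivial) summands $H^i(\cO_S)\otimes\cO$ from a complex whose troublesome cohomology is globally constant. Here the jumping $\Ext^1$ is genuinely geometric and cannot be removed this way. Producing an honest bundle representative requires constructing a \emph{different} $K$-theory representative, not a resolution of this one; that is the nontrivial content deferred to \cite{GT2}.

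The paper's own proof proceeds along entirely different lines. It first establishes the vanishing for $\beta\gg0$ on $S^{[n_1]}\times S^{[n_2]}\times S_\beta$ by realising the nested Hilbert scheme $S^{[n_1,n_2]}_\beta$ as a degeneracy locus and applying the higher Thom--Porteous formula \eqref{higher} with $h^0$ canonically trivialised by $s_\cD$. It then descends from $S_\beta$ to $\Pic_\beta(S)$ using Manivel's formula for Chern classes of twists, and finally extends to all $\beta$ by observing (via Grothendieck--Riemann--Roch) that the class is a universal polynomial in $\beta$ which vanishes on an open cone.
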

\medskip

\noindent\textbf{The other degeneracy loci.}
In the companion paper \cite{GT2} we work with \emph{all} the degeneracy loci $Z_k$. These do not generally admit perfect obstruction theories when $k>r$. However there are natural spaces $\widetilde Z_k\to Z_k$ dominating them which are actually resolutions of their singularities in the transverse case (when all the $Z_k$ have the correct codimension). For this reason we call the $\widetilde Z_k$ ``\emph{virtual resolutions}". Though they are singular in general we show they admit natural perfect obstruction theories and virtual cycles whose pushforwards we can again describe by Chern class formulae.\footnote{Since $\widetilde Z_r\cong Z_r$ the constructions in \cite{GT2} and  this paper coincide when $k=r$.}

In this paper the natural application was to nested \emph{punctual} Hilbert schemes of a smooth surface $S$. In \cite{GT2} the natural application is to nested Hilbert schemes of both points \emph{and curves} in $S$. Fundamentally the difference is the following. Letting $I_1,I_2\subset\cO_S$ be ideal sheaves of 0-dimensional subschemes of $S$, then
\beq{hom12}
\Hom(I_1,I_2)
\eeq
either vanishes, or --- for $I_1\subset I_2$ in the nested Hilbert scheme --- is at most $\C$. Hence $S^{[n_1,n_2]}$ is the degeneracy locus of the complex \eqref{fortnite}. Conversely, when $I_1$ or $I_2$ have divisorial components, \eqref{hom12} can become arbitrarily big, and different elements correspond to different subschemes of $S$. (In the case $I_1=\cO_S(-D)$ and $I_2=\cO_S$, elements correspond --- up to scale --- to divisors in the same linear system as the divisor $D\subset S$.) Therefore the corresponding nested Hilbert scheme \emph{dominates} the degeneracy locus of the complex \eqref{fortnite} but need not equal it. In \cite{GT2} we show it is naturally a virtual resolution of the type $\widetilde Z_k$.

\smallskip\noindent\textbf{Acknowledgements.} 
Artan Sheshmani was part of a good portion of this project, but decided to concentrate on developing virtual fundamental classes in higher rank situations \cite{SY}. We thank him for many useful conversations, as well as Davesh Maulik, Andrei Negut and Andrei Okounkov. A.G. acknowledges partial support from NSF grant DMS-1406788. 

There are some constructions in the literature which are closely related to ours; see for instance \cite[Equation 6.2]{Ne1} for $S=\PP^2$, \cite[Section 2.3]{Ne2} for some special types of nested sheaves, and \cite[Equation (2.23)]{Ne2} for their perfect obstruction theory. Just before posting this paper we became aware of the old notes \cite{MO}, which describe the local DT obstruction theory \cite{GSY2} on the nested Hilbert scheme, and a K-theoretic version of the Carlsson-Okounkov operator on toric surfaces. With hindsight it seems that Okounkov et al probably new of some form of relationship between the virtual class and the Thom-Porteous formula for toric surfaces with $H^{\ge1}(\cO_S)=0$, even if they're too modest to admit it now. 

\medskip\noindent\textbf{Notation.} Given a map $f\colon X\to Y$, we often use the same letter $f$ to denote its basechange by any map $Z\to Y$, i.e. $f\colon X\times_YZ\to Z$. We also sometimes suppress pullback maps $f^*$ on sheaves.

\section{Zero loci} \label{zl}
We start by recalling the standard construction of a perfect obstruction theory, on the zero scheme $Z$ of a section $\sigma$ of a vector bundle $E$ over a smooth ambient space $A$:
\beq{model}
\xymatrix@R=15pt@C=0pt{
& E\dto \\
Z=\sigma^{-1}(0)\ \subset & A.\ar@/^{-2ex}/[u]_\sigma}
\eeq
On $Z$ the derivative of this diagram gives
\beq{model2}
\xymatrix@R=18pt{
E^*|_Z \ar[d]_\sigma\ar[r]^{d\sigma|_Z}& \Omega_A|_Z\ar@{=}[d] \\
I/I^2 \ar[r]^d& \Omega_A|_Z,}
\eeq
where $I\subset\cO_A$ is the ideal sheaf of $Z$ generated by $\sigma$. The bottom row is a representative of the truncated cotangent complex $\LL_Z$ of $Z$; denoting the two-term locally free complex on the top row by $F\udot$ we get a morphism\footnote{\eqref{model} also induces a natural map from $F\udot$ to the \emph{full} cotangent complex of $Z$ \cite[Section 6]{BF}, but we shall not need this.}
\beq{pot}
F\udot\To\LL_Z
\eeq
in $D(\mathrm{Coh}\,Z)$ which induces an isomorphism on $0$th cohomology sheaves $h^0$ and a surjection on $h^{-1}$. This data is called a \emph{perfect obstruction theory} \cite{BF} on $Z$, and induces a virtual cycle
$$
[Z]^{\vir}\ \in\ A_{\vd}(Z)\To H_{2\!\vd}(Z)
$$
satisfying natural properties. Here $H$ denotes Borel-Moore homology, and $\vd:=\dim A-\rk E$ is the \emph{virtual dimension} of the perfect obstruction theory.

\section{Degeneracy loci}\label{degloc}
We work on a smooth complex quasi-projective variety $A$ with a map 
$$
E_0\rt{\sigma}E_1
$$
between vector bundles of ranks $r_0$ and $r_1$. We denote by
\beq{Zkay}
Z_k\ \subset\ A
\eeq
the degeneracy locus where $\rk(\sigma)$ drops to $\le k$. This has a scheme structure defined by the vanishing of the $(k+1)\times(k+1)$ minors of $\sigma$, i.e. of
\beq{wwedge}
\wwedge^{k+1}\sigma\,\colon\,\wwedge^{k+1}E_0\To\wwedge^{k+1}E_1.
\eeq
The $Z_k$ can be characterised by the rank of the cokernel of $\sigma$ over them \cite[Section 20.2]{Ei}. In Section \ref{arb} we will need a characterisation in terms of the kernel. Though this does not basechange well, it works for the smallest $Z_k$.

That is, let $r$ denote the minimal rank of $\sigma$, so that $Z_{r-1}=\emptyset$, and set $Z:=Z_r$.
This is the largest subscheme of $A$ on which $\ker\sigma|_Z$ is locally free of rank $r_0-r$:

\begin{lem} \label{h0base} For a map of schemes $f\colon T\to A$, the following are equivalent.
\begin{enumerate}
\item $f$ factors through $Z=Z_r\subset A$,
\item $\ker\big(f^*\sigma\colon f^*E_0\to f^*E_1\big)$ is a rank $r_0-r$ subbundle of $f^*E_0$,
\item $\ker\big(f^*\sigma\colon f^*E_0\to f^*E_1\big)$ has a locally free subsheaf of rank $r_0-r$.
\end{enumerate}
\end{lem}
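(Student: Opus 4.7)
The plan is to prove $(1)\Rightarrow(2)\Rightarrow(3)\Rightarrow(1)$, with the middle implication immediate (a subbundle is \emph{a fortiori} a locally free subsheaf) and the content lying in the outer implications.

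For $(1)\Rightarrow(2)$: if $f$ factors as $\iota\circ g$ with $\iota\colon Z\hookrightarrow A$, then on $Z$ itself the restriction $\sigma|_Z$ has rank exactly $r$ at every point. Indeed, the rank is $\le r$ by the definition of $Z=Z_r$, while the hypothesis $Z_{r-1}=\emptyset$ together with the lower semicontinuity of the rank of a map of vector bundles (equivalently, upper semicontinuity of $\dim\coker$) forces it to be $\ge r$ everywhere on $Z$, not only at closed points. The standard theorem that a map of vector bundles of locally constant rank has a kernel which is itself a subbundle then makes $\ker(\sigma|_Z)$ a rank-$(r_0-r)$ subbundle of $E_0|_Z$; pulling back by $g$ preserves being a subbundle (subbundles split locally), yielding $(2)$.

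For $(3)\Rightarrow(1)$, the scheme-theoretic statement that $f$ factors through $Z_r$ amounts to the vanishing
$$
\wwedge^{r+1}(f^*\sigma)\=0,
$$
since $Z_r$ is cut out by the $(r+1)$-minors of $\sigma$ (see \eqref{wwedge}). I would verify this vanishing at every associated prime $\mathfrak{p}$ of $\cO_T$ and then invoke the standard fact that a section of a coherent sheaf on a locally Noetherian scheme vanishes iff it vanishes at every associated prime stalk. The key point at each such stalk is that $\cO_{T,\mathfrak{p}}$ has the special feature that every non-zero-divisor is a unit---its maximal ideal is the annihilator of a nonzero element of $\cO_T$ (the defining property of an associated prime), hence consists entirely of zero-divisors. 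By McCoy's theorem, the injection of locally free sheaves $L\hookrightarrow f^*E_0$ has $(r_0-r)$-minors generating an ideal with zero annihilator; localising at $\mathfrak{p}$, at least one such minor must be a non-zero-divisor, hence a unit. This exhibits $L_{\mathfrak{p}}$ as a direct summand of $(f^*E_0)_{\mathfrak{p}}$, so the quotient $(f^*E_0)_{\mathfrak{p}}/L_{\mathfrak{p}}$ is free of rank $r$; since $f^*\sigma_{\mathfrak{p}}$ factors through this quotient, and since the $(r+1)$-st exterior power of any $r$-generated module is zero, we obtain $\wwedge^{r+1}(f^*\sigma)_{\mathfrak{p}}=0$.

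The main obstacle is this $(3)\Rightarrow(1)$ direction, or rather the fact that the given $L$ need \emph{not} be a subbundle of $f^*E_0$ globally when $T$ is non-reduced---so one cannot simply argue subbundle-wise on all of $T$ and bound $\rk(f^*\sigma)$ by $\rk(f^*E_0/L)$. The associated-primes trick circumvents this precisely because $L$ automatically becomes a subbundle at every associated prime stalk (by the zero-divisors-are-units feature of those local rings), and the associated-primes criterion globalises the stalk-wise vanishing of $\wwedge^{r+1}(f^*\sigma)$ to the required scheme-theoretic vanishing.
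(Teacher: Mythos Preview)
Your $(3)\Rightarrow(1)$ argument via McCoy's theorem and associated primes is correct and takes a genuinely different path from the paper. The paper instead argues that $\coker(f^*\sigma)$ has constant fibre dimension $r_1-r$ at every closed point (using $Z_{r-1}=\emptyset$), then invokes Nakayama to conclude $\coker(f^*\sigma)$ is locally free; from this it follows that $\ker(f^*\sigma)$ is a rank-$(r_0-r)$ subbundle, so $f^*E_0/\ker(f^*\sigma)$ is locally free of rank $r$ and $\wwedge^{r+1}(f^*\sigma)$ factors through zero. Your route bypasses any global analysis of the cokernel and reduces directly to a local statement at associated points, which is arguably more robust: the paper's Nakayama step (``constant fibre dimension implies locally free'') is delicate when $T$ is non-reduced, and the Lemma is later applied to such $T$ (e.g.\ $T=Z(\widetilde\sigma)$ in the proof of Theorem~\ref{prop}).

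One caveat on your $(1)\Rightarrow(2)$: the principle ``constant rank at all points $\Rightarrow$ kernel is a subbundle'' fails on non-reduced schemes (for instance multiplication by $\epsilon$ on $\cO_{\Spec k[\epsilon]/(\epsilon^2)}$ has rank $0$ at the unique point but kernel $(\epsilon)\cong k$, which is not free). The hypothesis that actually holds on $Z$ and gives the conclusion is the scheme-theoretic one: the $(r{+}1)$-minors vanish (by definition of $Z_r$) \emph{and} the $r$-minors generate the unit ideal (since $Z_{r-1}=\emptyset$). This is exactly what the paper uses, citing \cite[Proposition~20.8]{Ei}. With that small correction your argument goes through.
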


\begin{proof}
If $f$ factors through $Z$ then $\wwedge^{r+1}f^*\sigma\cong f^*\wwedge^{r+1}\sigma|_Z\equiv0$. Since $Z_{r-1}=\emptyset$ it follows from \cite[Proposition 20.8]{Ei} that $\coker f^*\sigma$ is locally free of rank $r_1-r$. Thus $\ker f^*\sigma$ is a rank $r_0-r$ subbundle of $f^*E_0$. This proves $(1)\so(2)\so(3)$.

For $(3)\so(1)$, we
suppose the kernel $K$ of $f^*E_0\to f^*E_1$ contains a locally free subsheaf of rank $r_0-r$. Therefore 
the rank of $f^*\sigma$ on the generic point of $T$ is $\le r$, and thus in fact equal to $r$ since we are assuming it drops no lower. In particular, $\coker(f^*\sigma)$ is a rank $r_1-r$ sheaf.

By lower semi-continuity of rank, $f^*\sigma|_t$ is of rank $\le r$ for any closed point $t\in T$, so, by our assumption on $r$ again, it is equal to $r$. Combined with the exact sequence
\beq{kerr}
f^*E_0|\_t\rt{\sigma|_t}f^*E_1|\_t\To(\coker f^*\sigma)|\_t\To0,
\eeq
i.e. the fact that $\coker(f^*\sigma|_t)=(\coker f^*\sigma)|_t$, this shows that $(\coker f^*\sigma)|_t$ has dimension $r_1-r$ for every closed point $t$. Therefore $\coker f^*\sigma$ is locally free of rank $r_1-r$ by the Nakayama lemma. This implies that $\ker f^*\sigma$ is a rank $r_0-r$ subbundle (rather than just a locally free subsheaf) of $f^*E_0$.

In particular $f^*E_0/K$ is locally free of rank $r$, so $\wwedge^{r+1}(f^*E_0/K)=0$. But
$$
f^*\wwedge^{r+1}\sigma\=\wwedge^{r+1}f^*\sigma\,\colon\ \wwedge^{r+1}f^*E_0\To\wwedge^{r+1}f^*E_1
$$
factors through $\wwedge^{r+1}(f^*E_0/K)$, so it is also zero. That is, $f$ factors through the zero scheme $Z\left(\wwedge^{r+1}\sigma\right)=Z_r$ of $\wwedge^{r+1}\sigma$.
\end{proof}

So $\sigma|_Z$ has rank precisely $r$, and its
kernel $h^0:=h^0(E_\bullet|_Z)$ and cokernel $h^1:=h^1(E_\bullet|_Z)$ are \emph{vector bundles} on $Z$ of rank $r_0-r$ and $r_1-r$ respectively,
\beq{h*}
0\To h^0\To E_0|_Z\rt{\sigma|_Z}E_1|_Z\To h^1\To0.
\eeq
For instance if $r=r_0-1$ then $\sigma$ is generically injective (and globally injective as a map of coherent sheaves) and $Z$ is the locus where it fails to be injective as a map of bundles. Its kernel is a line bundle over $Z$. If $E_0=\cO_A$ then $Z$ is the zero locus of $\sigma$ and we are back in the setting of Section \ref{zl}.

\begin{thm} \label{prop} The degeneracy locus $Z=Z_r$ inherits a 2-term perfect obstruction theory
$$
\big\{(h^1)^*\otimes h^0\To\Omega_A|_Z\big\}\To\LL_Z.
$$
The push forward of the resulting virtual cycle $[Z]^{\vir}\in A_{n-k}(Z)$ to $A$ is given by the Thom-Porteous formula
$$
\Delta\;_{r_1-r}^{r_0-r}\big(c(E_1-E_0)\big)\ \in\ A_{n-k}(A).
$$
Here $n=\dim A,\ k=(r_0-r)(r_1-r)$ and $\Delta\;^a_b(c):=\det\!\big(c_{b+j-i}\big)_{1\le i,j\le a}$\,.
\end{thm}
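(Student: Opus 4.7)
The footnote tells us to reduce to the zero-locus model \eqref{model}, so the natural ambient space is the Grassmann bundle $\pi\colon G:=\Gr(r_0-r,E_0)\to A$ with tautological sequence $0\to\mathcal S\to\pi^*E_0\to\mathcal Q\to0$. The composition $\mathcal S\hookrightarrow\pi^*E_0\to\pi^*E_1$ is a tautological section
$$
s\ \in\ H^0\big(G,\mathcal S^*\otimes\pi^*E_1\big).
$$
I first claim the zero scheme $Z(s)\subset G$ is carried isomorphically onto $Z$ by $\pi$. Indeed a point of $Z(s)$ is a pair $(x,K)$ with $K\subseteq\ker(\sigma|_x)$ of rank $r_0-r$; since $r$ is the minimal rank of $\sigma$ we get $x\in Z$ and $K=h^0|_x$ is forced, and Lemma \ref{h0base} applied to the family $Z(s)\hookrightarrow G\to A$ upgrades this bijection to a scheme-theoretic isomorphism $Z(s)\cong Z$, with $\mathcal S|_{Z(s)}=h^0$.

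Applying the construction of Section \ref{zl} to the zero locus of $s$ gives the perfect obstruction theory
$$
F\udot\ =\ \big\{h^0\otimes E_1^*|_Z\To\Omega_G|_Z\big\}\To\LL_Z,
$$
whose virtual dimension $\dim G-(r_0-r)r_1=n-(r_0-r)(r_1-r)$ matches. Next I would construct a canonical map of two-term complexes
$$
\big\{(h^1)^*\otimes h^0\To\Omega_A|_Z\big\}\ \rt{\ \iota\ }\ F\udot
$$
using the inclusion $(h^1)^*\hookrightarrow E_1^*$ coming from $0\to(h^1)^*\to E_1^*|_Z\to(\im\sigma|_Z)^*\to0$ in degree $-1$, and the pullback $\Omega_A|_Z\hookrightarrow\Omega_G|_Z$ in degree $0$. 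The cone of $\iota$ equals
$$
\big\{h^0\otimes(E_0/h^0)^*|_Z\rt{\ \partial\ }\Omega_{G/A}|_Z\big\},
$$
after identifying $(\im\sigma|_Z)^*$ with $(E_0/h^0)^*|_Z$ via $\bar\sigma$ and using $\Omega_{G/A}|_Z=h^0\otimes(E_0/h^0)^*$. A direct computation shows $\partial$ is the fibre-direction part of $ds$, which is the identity map — this is precisely the universal property of the Grassmannian. Hence $\iota$ is a quasi-isomorphism, and composing with $F\udot\to\LL_Z$ yields the claimed perfect obstruction theory on $Z$ inducing the same virtual cycle as $F\udot$.

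For the Thom--Porteous formula, the zero-locus model gives
$$
\iota_*[Z]^{\vir}\=\pi_*\big(c_{(r_0-r)r_1}(\mathcal S^*\otimes\pi^*E_1)\cap[G]\big)\ \in\ A_{n-k}(A).
$$
This Grassmann-bundle push-forward is a classical computation: expanding using the splitting principle and the relation $c(\mathcal S)c(\mathcal Q)=c(\pi^*E_0)$, and using that Schubert-calculus integration in $\pi$ turns the top Chern class of $\mathcal S^*\otimes\pi^*E_1$ into the Schur-type determinant $\Delta^{r_0-r}_{r_1-r}(c(E_1-E_0))$; this is precisely Example 14.4.2 (the Kempf--Laksov derivation of Thom--Porteous) in Fulton's intersection-theory book.

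The step I expect to take the most care is the quasi-isomorphism in the middle paragraph — specifically, checking that $\iota$ fits into a commuting triangle over $\LL_Z$ so that the virtual cycle induced by $\{(h^1)^*\otimes h^0\to\Omega_A|_Z\}\to\LL_Z$ genuinely coincides with that of $F\udot$. This reduces to a local computation of $ds$ in an affine trivialisation of $E_0$, $E_1$ and $G$ near a point of $Z$, where the differential of a local defining section of $\mathcal S^*\otimes\pi^*E_1$ can be written as a block matrix whose fibre block is the tautological isomorphism above and whose base block induces the canonical map $T_A|_Z\to\hom(h^0,h^1)|_Z$ obtained by linearising $\sigma$ modulo $\im\sigma|_Z$.
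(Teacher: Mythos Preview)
Your proof plan is correct and follows essentially the same route as the paper: the same Grassmann bundle $\Gr(r_0-r,E_0)$, the same tautological section (called $\widetilde\sigma$ there), identification of its zero locus with $Z$ via Lemma \ref{h0base}, and Fulton's Theorem 14.4 for the push-forward. The quasi-isomorphism you flag as the delicate step is organised in the paper as a diagram with short exact columns whose commutativity (their ``Claim 2'') is checked by restricting the section to the first-order thickening $2Z\subset\Gr_Z$ rather than by a local trivialisation, but the content is the same.
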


\begin{proof}
We work on the relative Grassmannian of $(r_0-r)$-dimensional subspaces of $E_0$,
$$
\Gr\,:=\,\Gr(r_0-r,E_0)\rt{q}A
$$
with universal subbundle $U\into q^*E_0$. Composing with $q^*\sigma$ gives a section
\beq{tilda}
\widetilde\sigma\ \in\ \Gamma(U^*\otimes q^*E_1).
\eeq
\textbf{Claim 1.} The zero locus $Z(\widetilde\sigma)\subset\Gr$ is isomorphic to $Z\subset A$ under the restriction $\overline{q}\colon Z(\widetilde\sigma)\to A$ of the projection $q\colon\!\Gr\to A$. \smallskip

At the level of closed points this is obvious: for $x\in A$
\begin{align*}
x\in Z&\iff\rk(\sigma|_x)=r \\
&\iff\rk(\ker(\sigma_x))=r_0-r \\
&\iff(E_0)|_x\mathrm{\ has\ a\ unique\ }(r_0-r)\mathrm{-dimensional\ subspace} \\ &\hspace{36mm}U_x
=\ker(\sigma_x)\mathrm{\ on\ which\ }\sigma|_x\mathrm{\ vanishes}\\
&\iff U_x\ \mathrm{is\ the\ unique\ point\ of\ }Z(\widetilde\sigma)\cap q^{-1}\{x\}.
\end{align*}
So $\overline{q}$ maps $Z(\widetilde\sigma)$ bijectively to $Z\subset A$. To see it maps scheme theoretically, note that, by construction, the composition
$$
U\Into q^*E_0\rt{q^*\sigma}q^*E_1
$$
is zero over $Z(\widetilde\sigma)$, so $\ker(\overline{q}^*\sigma)$ contains a locally free sheaf $U|_{Z(\widetilde\sigma)}$ of rank $r_0-r$. Thus $\overline{q}$ factors through $Z\subset A$ by Lemma \ref{h0base}.

By Lemma \ref{h0base} again, $\ker(\sigma|_Z)$ is a rank $r_0-r$ subbundle of $E_0$. Its classifying map $Z\to\Gr(r_0-r,E_0)$ has image in $Z(\widetilde\sigma)$ and clearly defines a right inverse to $\overline{q}\colon Z(\widetilde\sigma)\to Z$.
So to prove that $\overline{q}$ is an isomorphism to $Z$ we need only show that the inverse image $\overline q^{\,-1}\{x\}$ of any closed point $x\in Z$ is a closed point of $Z(\widetilde\sigma)$.

Given a rank $r$ linear map $\Sigma\colon V\to W$ between vector space of dimensions $r_0,r_1$, an elementary calculation show that the composition
$$
U\Into V\otimes\cO\rt{\Sigma}W\otimes\cO
$$
on the Grassmannian $\Gr(r_0-r,V)$ cuts out the \emph{reduced} point $\big[\!\;\ker\Sigma\subset V\big]\in\Gr(r_0-r,V)$. Applying this to $\Sigma=\sigma|_x$ proves Claim 1. \smallskip

\noindent\textbf{Perfect obstruction theory.} Since $Z\cong Z(\widetilde\sigma)$ is cut out of $\Gr$ by $\widetilde\sigma\in\Gamma(U^*\otimes q^*E_1)$, it inherits the standard perfect obstruction theory \eqref{model2}, i.e.
\beq{mod3}
U\otimes q^*E_1^*|_{Z(\widetilde\sigma)}\rt{d\;\widetilde\sigma|_{Z(\widetilde\sigma)}}\Omega_{\Gr}|_{Z(\widetilde\sigma)}
\eeq
mapping to $\LL_{Z(\widetilde\sigma)}=\LL_Z$. Now \eqref{mod3} fits into a diagram
\beq{dg}
\xymatrix@R=18pt{
U\big|_Z\otimes(h^1)^* \ar[d]\ar[r] & q^*\Omega_A\big|_{Z(\widetilde\sigma)} \ar[d] \\
U\otimes E_1^*\big|_Z \ar[d]_{\id_U\otimes\!}^{\!\sigma^*}\ar[r]^{d\;\widetilde\sigma|_{Z(\widetilde\sigma)}} & \Omega_{\Gr}\big|_{Z(\widetilde\sigma)} \ar[d] \\
U|_Z\otimes\big(E_0|_Z\big/\ker\sigma\big)^* \ar@{=}[r]& 
\Omega_{\Gr/A}\big|_{Z(\widetilde\sigma)\,,}}
\eeq
with left hand column the short exact sequence $U|_Z\otimes\,$\eqref{h*}${}^*$, and right hand column the natural short exact sequence of the fibration $\Gr\to A$. The bottom equality is dual to the standard identification $T_{\Gr/A}\cong\hom(U,E_0/U)$. 

\medskip
Assuming \eqref{dg} is commutative for now, we can consider it as providing a quasi-isomorphism between the top row and the middle row (which is \eqref{mod3}). Hence the perfect obstruction theory \eqref{mod3} is
$$
h^0\otimes(h^1)^*\To\Omega_A|_Z,
$$
as claimed. Just as in \eqref{Euler}, the pushforward of the resulting virtual cycle to $\Gr$ is the Euler class $c_{(r_0-r)r_1}(U^*\otimes q^*E_1)$. Pushing this down to $A$ gives the pushforward of $[Z]^{\vir}$ to $A$, by the commutativity of the diagram
$$
\xymatrix@R=18pt{
Z(\widetilde\sigma) \INTO\ar@{=}[d] & \Gr \ar[d]^ q \\
Z \INTO & A.\!}
$$
But pushing forward $c_{(r_0-r)r_1}(U^*\otimes q^*E_1)$ to $A$ gives $\Delta\;_{r_1-r}^{r_0-r}\big(c(E_1-E_2)\big)$ by \cite[Theorem 14.4]{Fu}. So we are left to prove \medskip

\noindent\textbf{Claim 2.} The diagram \eqref{dg} is commutative. \smallskip

We need only show that the lower square of \eqref{dg} commutes; the upper one is then induced from it. Let $\Gr\_Z:=\Gr\times\_{\!A}\;Z$ and observe $Z(\widetilde\sigma)\subset\Gr\_Z$, with ideal sheaf $I$ say. We let
$$
2Z\ \Into\ \Gr_Z
$$
be its scheme-theoretic doubling with ideal sheaf $I^2$. Let $p:= q|_{2Z}$ be the induced projection $2Z\to Z$ and consider the maps
\beq{ars}
U|\_{2Z}\ \Into\ (q^*E_0)|\_{2Z}\,\cong\,p^*(E_0|_Z) \To p^*(E_0/U|_Z)\rt{\sigma|_Z}p^*(E_1|_Z).
\eeq
The final arrow is constructed from $\sigma|_Z\colon E_0|_Z\to E_1|_Z$ by recalling that $U|_Z\cong\ker(\sigma|_Z)$.

%$$
%\xymatrix@R=18pt@C=12pt{
%U\otimes\Omega_{\Gr\_{Z\!}/Z} \ar[d]&& (E_1/U)\otimes\Omega_{\Gr\_{Z\!}/Z} \ar[d] \\
%\ \ U|\_{2Z} \ar[d]\INTO& (q^*E_1)|\_{2Z}\,\cong\,p^*(E_1|_Z) \ar[r]&
%p^*(E_1/U|_Z) \ar[d]\ar[r]^-{\sigma}& p^*(E_2|_Z). \\
%U|_Z && (E_1/U)|\_Z}
%$$
%The two vertical columns are the obvious short exact sequences arising from tensoring with $0\to\Omega_{\Gr\_{Z\!}/Z}\to\cO_{2Z}\to\cO_Z\to0$. The composition $U|\_{2Z}\to p^*(E_1/U|_Z)$ vanishes, so lifts to a map 
%$U|\_{2Z}\to(E_1/U)\otimes\Omega_{\Gr\_{Z\!}/Z}$ which vanishes on $U\otimes\Omega_{\Gr\_{Z\!}/Z}$. Thus it descends to a map
%$$
%U|\_Z\To(E_1/U)\otimes\Omega_{\Gr\_{Z\!}/Z}\,.
%$$
The composition of the first two arrows of \eqref{ars} is a section of $U^*|\_{2Z}\otimes p^*(E_0/U|_Z)$ on $2Z$ which vanishes on $Z$. Since the ideal of $Z\subset2Z$ is $\Omega_{\Gr\_{Z\!}/Z}$ it is a section of $(U|_Z)^*\otimes(E_0/U|_Z)\otimes\Omega_{\Gr\_{Z\!}/Z}$.
This is precisely the (adjoint of) the standard description of the isomorphism
$$
U|\_Z\otimes(E_0/U)|_Z^*\ \cong\ \Omega_{\Gr\_{Z\!}/Z}\,,
$$
i.e. the bottom row of \eqref{dg}.

Since $p^*(E_1|_Z)=(q^*E_1)|_{2Z}$, the composition of all the arrows in \eqref{ars} is just $\widetilde\sigma|_{2Z}$. It vanishes on $Z$, defining the section $[d\;\widetilde\sigma|_Z]$ of
$$
(U|_Z)^*\otimes E_1|_Z\otimes I/I^2\ \cong\ \hom\big(U\otimes E_1^*|\_Z,\Omega_{\Gr/A}|\_Z\big)
$$
which defines the central arrow of \eqref{dg}.
Thus \eqref{dg} commutes.
\end{proof}

\subsection{Higher Thom-Porteous formula}
When $r_0-r=1$, so the sheaf $h^0$ is a line bundle on the degeneracy locus $Z$, the following ``higher" Thom-Porteous formula will be useful later. Let $\iota\colon Z\into A$ denote the inclusion.

\begin{prop} \label{simplah}
If $r_0-r=1$ then the Thom-Porteous formula becomes
$$
\iota_*[Z]^{\vir}\=c_{r_1-r_0+1}(E_1-E_0)
$$
in $A_{n+r-r_1}(A)$, and for any $i\ge0$ we have the following extension to higher Chern classes:
\beq{higher}
\iota_*\Big(c_1\big((h^0)^*\big)^i\cap[Z]^{\vir}\Big)\=c_{r_1-r_0+1+i\;}(E_1-E_0).
\eeq
\end{prop}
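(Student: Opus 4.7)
My plan is to reduce everything to the Grassmannian model already set up in the proof of Theorem \ref{prop} and to compute the pushforward by a direct Chern-class expansion.

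Since $r_0-r=1$, the relative Grassmannian is the projective bundle $\Gr=\Gr(1,E_0)=\PP(E_0)\rt q A$, and the universal sub $U\hookrightarrow q^*E_0$ is the tautological line bundle. Under the isomorphism $Z\cong Z(\widetilde\sigma)$ of Claim 1 in the proof of Theorem \ref{prop}, the line bundle $h^0=\ker(\sigma|_Z)$ is identified with $U|_{Z(\widetilde\sigma)}$, since a point of $Z$ is sent to its kernel line in $E_0$. Writing $\zeta:=c_1(U^*)$ for the relative hyperplane class on $\PP(E_0)$, we therefore have $c_1((h^0)^*)=\zeta|_{Z(\widetilde\sigma)}$. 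The first formula $\iota_*[Z]^{\vir}=c_{r_1-r_0+1}(E_1-E_0)$ is the $i=0$ special case of \eqref{higher}, since the Thom-Porteous determinant $\Delta^1_{r_1-r}$ is just $c_{r_1-r}$ and $r=r_0-1$.

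For the higher formula, push $c_1((h^0)^*)^i\cap[Z]^{\vir}$ forward in two stages: first through the inclusion $Z(\widetilde\sigma)\hookrightarrow\Gr$ into $\PP(E_0)$, then through $q$ to $A$. By the standard zero-locus formula \eqref{Euler} applied to the section $\widetilde\sigma\in\Gamma(U^*\otimes q^*E_1)$ (cf.\ the end of the proof of Theorem \ref{prop}), the first pushforward is $\zeta^i\cdot c_{r_1}\!\left(U^*\otimes q^*E_1\right)\cap[\Gr]$ by the projection formula. Since $U$ is a line bundle, the total Chern class expansion
\[
c_{r_1}\!\left(U^*\otimes q^*E_1\right)\=\sum_{j=0}^{r_1}c_j(q^*E_1)\,\zeta^{r_1-j}
\]
holds on the nose. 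Multiplying by $\zeta^i$ and applying $q_*$ to $A$ gives
\[
\iota_*\Big(c_1((h^0)^*)^i\cap[Z]^{\vir}\Big)\=\sum_{j=0}^{r_1}c_j(E_1)\;q_*\zeta^{r_1+i-j}.
\]

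To finish, I will invoke the defining Segre-class identity $q_*\zeta^{r_0-1+k}=s_k(E_0)$ for the projective bundle $\PP(E_0)$, so $q_*\zeta^{r_1+i-j}=s_{r_1-r_0+1+i-j}(E_0)$, with the convention $s_k=0$ for $k<0$. Substituting and using $c(E_1-E_0)=c(E_1)\,s(E_0)$ collapses the sum to $c_{r_1-r_0+1+i}(E_1-E_0)$, as required. The verification is entirely bookkeeping --- the only genuinely substantive input is the identification $h^0\cong U|_Z$ coming from Theorem \ref{prop}, which lets the calculation be carried out on the ambient projective bundle rather than on $Z$ itself. No serious obstacle is expected; the main care is just tracking conventions so that the Segre-class calculation on $\PP(E_0)$ produces exactly $c(E_1-E_0)$ in the advertised degree.
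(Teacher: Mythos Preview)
Your proof is correct and follows essentially the same route as the paper: both identify $\PP(E_0)$ with the Grassmannian, recognise $h^0$ as the tautological line $\cO_{\PP(E_0)}(-1)$ over $Z$, expand $c_{r_1}(q^*E_1(1))$ as a polynomial in the hyperplane class, and push down via the Segre-class identity to obtain $c_{r_1-r_0+1+i}(E_1-E_0)$. The only cosmetic difference is that you frame the first formula as the $i=0$ case of \eqref{higher}, whereas the paper derives it separately from the $a=1$ simplification of the Thom--Porteous determinant.
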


\begin{proof}
The first part follows from the simplification
$$
\Delta\;_b^a\big(c(\ \cdot\ )\big)\=
c\_b(\ \cdot\ )
$$
when $a=r_0-r=1$.

For the second part, recall from \eqref{tilda} that $Z$ is cut out of $\PP(E_0)\rt{q}A$ by the vanishing of the composition
$$
\cO_{\PP(E_0)}(-1)\Into q^*E_0\rt{q^*\sigma}q^*E_1.
$$
Moreover, over this copy of $Z$, we see that the kernel $h^0$ of $E_0\to E_1$ is $\cO_{\PP(E_0)}(-1)$. Therefore, denoting Segre classes by $s_i$, we have
\begin{align*}
\iota_*\Big(c_1\big((h^0)^*\big)^i&\cap\big[Z\big]^{\vir}\Big)\=q_*\!\left(c_1\big(\cO_{\PP(E_0)}(1)\big)^i\cup c_{r\_1}(q^*E_1(1))\right) \nonumber \\
&\=q_*\left(c_1\big(\cO_{\PP(E_0)}(1)\big)^i\cup \sum_{j=0}^{r_1}c_j(q^*E_1)\cup c_1\big(\cO_{\PP(E_0)}(1)\big)^{r_1-j}\right) \nonumber \\
&\=\sum_{j=0}^{r_1}q_*\left(c_1\big(\cO_{\PP(E_0)}(1)\big)^{i+r_1-j}\cup q^*c_j(E_1)\right) \nonumber \\
&\=\sum_{j=0}^{r_1}s_{i+r_1-j-r_0+1}(E_0)\cap c_j(E_1) \nonumber \\
&\=c_{r_1-r_0+i+1}(E_1-E_0). \qedhere
\end{align*}
\end{proof}

Working throughout this Section with $\sigma^*\colon E_1^*\to E_0^*$ instead of $\sigma\colon E_0\to E_1$ gives the same results, up to some reindexing of notation.

\section{Jumping loci of direct image sheaves}
Suppose $f\colon X\to Y$ is morphism of projective schemes, with $Y$ smooth. Fix either a coherent sheaf $\cF$ on $X$ which is \emph{flat over $Y$}, or a perfect complex $\cF$ on $X$ and assume that \emph{$X$ is flat over $Y$.}

We assume that the cohomologies of $\cF$ on any closed fibre $X_y,\ y\in Y$, are concentrated in only two adjacent degrees $i,\,i+1$. Let $a$ denote the maximal dimension of $h^i(X_y,\cF_y)$ as $y$ varies throughout $Y$. That is, we assume there exists $i\in\N$ such that
\begin{itemize}
\item $h^j(X_y,\cF_y)\=0 \quad \forall j\not\in\{i,i+1\},\ \forall y\in Y$,
\item $h^i(X_y,\cF_y)\ \le\ a \quad \forall y\in Y$.
\end{itemize}
It follows that $h^{i+1}(X_y,\cF_y)$ has maximal dimension $b:=a-(-1)^i\chi(\cF_y)$.

Now $Rf_*\;\cF$ is a perfect complex on $Y$ which,
by basechange and the Nakayama lemma, can be trimmed to be a 2-term complex of locally free sheaves
$$
Rf_*\,\cF\ \simeq\ \big\{E_i\to E_{i+1}\big\}
$$
in degrees $i$ and $i+1$. On restriction to the maximal degeneracy locus
$$
Z_a\ :=\ \big\{y\in Y\colon h^i(X_y,\cF_y)=a\big\}\ \subset\ Y
$$
it has kernel of rank $a$. (Note this labelling convention differs slightly from \eqref{Zkay}.) Let $X_Z:=X\times_YZ$ and $\bar f:=f|_{X_Z}$. By \eqref{wwedge} and Theorem \ref{prop} we deduce the following.

\begin{prop} \label{prop2}
The maximal jumping locus $Z=Z_a$ has a natural scheme structure and perfect obstruction theory
$$
\Big\{\big(R^{i+1}\bar f_*\;\cF\big)^*\otimes R^i\bar f_*\;\cF\To\Omega_Y|_Z\Big\}\To\LL_Z,
$$
with the $R^j\bar f_*\;\cF$ locally free. The resulting virtual cycle
$$
[Z]^{\vir}\,\in\,A_d(Z), \quad d:=\dim Y-ab,
$$
when pushed forward to $Y$, is given by
$$
\Delta\;_b^a\big(c(Rf_*\cF[i+1])\big)\ \in\ A_d(Y). \vspace{-6mm}
$$
\ \hfill$\square$
\end{prop}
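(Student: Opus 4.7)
The plan is to reduce the statement to a direct application of Theorem \ref{prop} applied to a two-term representative of $Rf_*\cF$. First, under either of the flatness hypotheses, $Rf_*\cF$ is a perfect complex on $Y$. Since the fibre cohomology $h^j(X_y,\cF_y)$ vanishes outside $j\in\{i,i+1\}$, cohomology and base change (as quoted in the preamble to the Proposition) lets us trim $Rf_*\cF$ globally to a two-term complex
$$
E_\bullet\=\big\{E_i\rt{\sigma}E_{i+1}\big\}
$$
of locally free sheaves in degrees $i$ and $i+1$. Setting $r_0:=\rk E_i,\ r_1:=\rk E_{i+1}$, the fibrewise identification $\ker(\sigma|_y)=h^i(X_y,\cF_y)$ gives $\rk(\sigma|_y)=r_0-h^i(X_y,\cF_y)\ge r_0-a$. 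Hence the minimal rank of $\sigma$ is $r:=r_0-a$, the hypothesis $h^i(X_y,\cF_y)\le a$ says $Z_{r-1}=\emptyset$, and the locus $Z_a$ where $h^i$ attains its maximum coincides with the minimal degeneracy locus $Z_r$ of Section \ref{degloc}.

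Next, Theorem \ref{prop} applied to $E_\bullet$ furnishes $Z=Z_a=Z_r$ with the claimed perfect obstruction theory
$$
\big\{h^1(E_\bullet|_Z)^*\otimes h^0(E_\bullet|_Z)\To\Omega_Y|_Z\big\}\To\LL_Z,
$$
and both $h^0(E_\bullet|_Z)=\ker(\sigma|_Z)$ and $h^1(E_\bullet|_Z)=\coker(\sigma|_Z)$ are locally free on $Z$. To rewrite these as direct images, observe that $E_\bullet|_Z$ still represents $R\bar f_*\cF$, where $\bar f\colon X_Z\to Z$ is the basechange. Since $h^0$ is locally free on $Z$ of constant rank $a$, cohomology and base change applies a second time to identify
$$
h^0(E_\bullet|_Z)\,\cong\,R^i\bar f_*\cF, \qquad h^1(E_\bullet|_Z)\,\cong\,R^{i+1}\bar f_*\cF,
$$
giving the stated shape of the obstruction theory.

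Finally, index and sign bookkeeping. The relation $\chi(\cF_y)=(-1)^i(r_0-r_1)$, read off from the trimmed complex on any fibre, gives
$$
r_1-r\=r_1-r_0+a\=a-(-1)^i\chi(\cF_y)\=b,
$$
so the virtual dimension is $\dim Y-(r_0-r)(r_1-r)=\dim Y-ab=d$. The Thom--Porteous pushforward from Theorem \ref{prop} is $\Delta^a_b\big(c(E_{i+1}-E_i)\big)$. Since $Rf_*\cF[i+1]$ is quasi-isomorphic to $E_\bullet$ placed in degrees $-1,0$, its K-theory class is $(-1)^{i+1}\big((-1)^i[E_i]+(-1)^{i+1}[E_{i+1}]\big)=[E_{i+1}]-[E_i]$ regardless of the parity of $i$, so $c(Rf_*\cF[i+1])=c(E_{i+1}-E_i)$ and the stated formula follows. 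The only genuine point of care is the double appeal to cohomology and base change --- first to secure the global two-term presentation on $Y$, then to identify the cohomology sheaves $h^j(E_\bullet|_Z)$ over $Z$ with $R^j\bar f_*\cF$ --- after which the Proposition is an immediate consequence of Theorem \ref{prop}.
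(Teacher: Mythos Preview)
Your proposal is correct and follows exactly the approach the paper intends: the Proposition is stated with a $\square$ and no separate proof precisely because it is meant to be read off from the preceding paragraph (trimming $Rf_*\cF$ to a two-term complex $E_i\to E_{i+1}$ via basechange and Nakayama) together with a direct application of Theorem~\ref{prop}. Your careful bookkeeping of $r_0-r=a$, $r_1-r=b$, and the identification $c(Rf_*\cF[i+1])=c(E_{i+1}-E_i)$ simply makes explicit what the paper leaves implicit.
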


The result can also be applied to jump loci of relative Ext sheaves (the cohomology sheaves of $R\hom_f(A,B):=Rf_*R\hom(A,B)$) by setting $\cF:=R\hom(A,B)$. We shall use this on punctual Hilbert schemes next.

\section{Nested Hilbert schemes on surfaces with $b_1=0=p_g$}\label{00}

Given positive integers $n_1\ge n_2\ge\cdots\ge n_k$, the $k$-step nested punctual Hilbert scheme of $S$ is, as a set,
\beqa
S^{[n_1,n_2,\ldots,n_k]} &\!:=& \big\{S\supseteq Z_1\supseteq Z_2\supseteq\cdots\supseteq Z_k\ \colon\ \mathrm{length}(Z_i)=n_i\big\} \\
&=& \big\{I_1\subseteq I_2\subseteq\cdots\subseteq I_k\subset\cO_S \colon\ \mathrm{length}(\cO_S/I_i)=n_i\big\}.
\eeqa
As a scheme it represents the functor which takes any base scheme $B$ to the set of ideals $\cI_1\subseteq\cI_2\subseteq\cdots\subseteq\cI_k\subset\cO_{S\times B}$, flat over $B$, such that the restriction of $\cI_i$ to any closed fibre $S\times\{b\}$ has colength $n_i$.

For simplicity we restrict to $k=2$ for now; we will return to general $k$  in Section \ref{kstep}. \medskip

Let $S$ be a smooth complex projective surface with (for now) $h^{0,1}(S)=0=h^{0,2}(S)$, and fix integers $n_1\ge n_2$. Over
$$
S^{[n_1]}\times S^{[n_2]}\times S\rt{\pi}S^{[n_1]}\times S^{[n_2]}
$$
we have the two universal subschemes $\cZ_1,\,\cZ_2$ and their ideal sheaves $\cI_1,\,\cI_2$. We will apply Proposition \ref{prop2} to the perfect complex
$$
R\hom_\pi(\cI_1,\cI_2)\ :=\ R\pi_*R\hom(\cI_1,\cI_2)
$$
on $S^{[n_1]}\times S^{[n_2]}$. Over the closed point $(I_1,I_2)\in S^{[n_1]}\times S^{[n_2]}$ we have
\beq{uro}
\Ext^i(I_1,I_2)\=0, \qquad i\ne0,1,
\eeq
by Serre duality. Moreover
\beq{homjp}
\Hom(I_1,I_2)\=\left\{\!\!
\begin{array}{ll} 0 & Z_1\not\supseteq Z_2, \\ 
\C & Z_1\supseteq Z_2 \end{array} \right.
\eeq
is generically zero and jumps by 1 (but never more) over the nested Hilbert scheme
\beq{nst}
S^{[n_1,n_2]}\ :=\ \big\{Z_2\subseteq Z_1\subset S,\ \mathrm{length}(Z_i)=n_i\big\},
\eeq
at least set-theoretically. Despite our usual notational conventions (to denote $\pi$ basechanged by $S^{[n_1,n_2]}\into S^{[n_1]}\times S^{[n_2]}$ also by $\pi$) we reserve
$$
p\colon S^{[n_1,n_2]}\times S\To S^{[n_1,n_2]}
$$
for the obvious projection. Since $\cI_1,\cI_2$ are flat over $S^{[n_1]}\times S^{[n_2]}$ they restrict to ideal sheaves over $S^{[n_1,n_2]}$; we denote them by the same letters.

\begin{prop} \label{nest0}
If $h^{0,1}(S)=0=h^{0,2}(S)$ then the 2-step nested Hilbert scheme $S^{[n_1,n_2]}$ carries a perfect obstruction theory
\beq{nhpot}
\Big(\big(\ext^1_p(\cI_1,\cI_2)\big)^*\To\Omega_{S^{[n_1]}\times S^{[n_2]}}\big|_{S^{[n_1,n_2]}}\Big)\To\LL_{S^{[n_1,n_2]}}
\eeq
and virtual cycle
$$
\big[S^{[n_1,n_2]}\big]^{\vir}\,\in\,A_{n_1+n_2}\big(S^{[n_1,n_2]}\big).
$$
Its pushforward to $S^{[n_1]}\times S^{[n_2]}$ is given by
\beq{pfwd}
c_{n_1+n_2}\big(R\hom_{\pi}(\cI_1,\cI_2)[1]\big)\ \in\ A_{n_1+n_2}\big(S^{[n_1]}\times S^{[n_2]}\big).
\eeq
\end{prop}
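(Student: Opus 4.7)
The plan is to apply Proposition \ref{prop2} to the perfect complex $\cF := R\hom(\cI_1,\cI_2)$ on $S \times Y$ with $Y := S^{[n_1]} \times S^{[n_2]}$, using the projection $\pi$. The structural hypotheses reduce to \eqref{uro} and \eqref{homjp}. For \eqref{uro}: by Serre duality $\Ext^2(I_1, I_2) \cong \Hom(I_2, I_1 \otimes K_S)^*$, and any such map, composed with $I_1 \otimes K_S \hookrightarrow K_S$, determines a unique element of $\Hom(\cO_S, K_S) = H^0(K_S) = 0$ (extension being possible because $K_S$ is locally free and $\cO_S/I_2$ is $0$-dimensional). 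For \eqref{homjp}: the same extension argument applied to any $I_1 \to I_2 \hookrightarrow \cO_S$ produces an element of $\C\cdot\id \subset \Hom(\cO_S, \cO_S)$, forcing $\Hom(I_1, I_2) \subseteq \C$ with equality iff $I_1 \subset I_2$. Combining $a=1$ with $\chi(\cO_S) = 1$ (from $h^{0,1} = h^{0,2} = 0$) gives $\chi(I_1, I_2) = 1 - n_1 - n_2$, so $b = n_1 + n_2$ and the expected virtual dimension is $n_1 + n_2$.

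The key step is identifying the maximal jumping locus $Z \subset Y$ of Proposition \ref{prop2} scheme-theoretically with $S^{[n_1,n_2]}$, which I will do by comparing functors of points via Lemma \ref{h0base}. A $T$-point of $Y$ corresponds to flat families $(\cI_1, \cI_2)$ on $T \times S$, and by Lemma \ref{h0base} it factors through $Z$ exactly when $\hom_{\pi_T}(\cI_1, \cI_2)$ contains a line subbundle $L$. Composing $L \hookrightarrow \hom_{\pi_T}(\cI_1, \cI_2)$ with $\cI_2 \hookrightarrow \cO_{T \times S}$ yields $L \to \hom_{\pi_T}(\cI_1, \cO_{T \times S}) \cong \cO_T$, where the identification uses that the canonical inclusion $\cI_1 \hookrightarrow \cO_{T \times S}$ trivialises $\hom(\cI_1, \cO_{T \times S}) \cong \cO_{T \times S}$ (as $\cI_1$ is rank-one torsion-free with $0$-dimensional quotient) and $(\pi_T)_* \cO_{T \times S} = \cO_T$ (as $S$ is connected). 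By the fibrewise computation of the previous paragraph this morphism $L \to \cO_T$ is nowhere vanishing, canonically trivialising $L$. Adjunction then gives a morphism $\cI_1 \to \cI_2$ whose composition with $\cI_2 \hookrightarrow \cO_{T \times S}$ is the canonical inclusion; in particular it is injective with image $\cI_1$, so $\cI_1 \subset \cI_2$ is a flag, defining a $T$-point of $S^{[n_1,n_2]}$. Conversely, any flag produces a canonical nowhere-vanishing section of $\hom_{\pi_T}(\cI_1, \cI_2)$, hence a canonical line subbundle $\cO_T \hookrightarrow \hom_{\pi_T}(\cI_1, \cI_2)$. These constructions are mutually inverse, so $Z \cong S^{[n_1,n_2]}$ as schemes.

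Finally, Proposition \ref{prop2} endows $Z$ with a perfect obstruction theory whose first term is $\hom_p(\cI_1, \cI_2) \otimes (\ext^1_p(\cI_1, \cI_2))^*$. The identification just made trivialises $\hom_p(\cI_1, \cI_2)$ canonically via the universal inclusion $\cI_1 \subset \cI_2$ (a nowhere-vanishing section of this line bundle), reducing the obstruction theory to \eqref{nhpot}. The pushforward of the virtual cycle is $\Delta^1_{n_1+n_2}\bigl(c(R\hom_\pi(\cI_1, \cI_2)[1])\bigr) = c_{n_1+n_2}(R\hom_\pi(\cI_1, \cI_2)[1])$ by the simplification noted in Proposition \ref{simplah}. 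The main obstacle in this approach is the scheme-theoretic identification $Z \cong S^{[n_1,n_2]}$ in the previous paragraph: while set-theoretic equality is immediate from \eqref{homjp}, matching scheme structures requires the canonical trivialisation $\hom_\pi(\cI_1, \cO) \cong \cO$ together with the rank-one kernel characterisation of Lemma \ref{h0base}.
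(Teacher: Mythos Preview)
Your proof is correct and follows essentially the same approach as the paper: apply Proposition~\ref{prop2} to $R\hom_\pi(\cI_1,\cI_2)$, identify the degeneracy locus scheme-theoretically with $S^{[n_1,n_2]}$ by trivialising the rank-one kernel $h^0$, and simplify the Thom--Porteous class via Proposition~\ref{simplah}. The only notable difference is in the trivialisation of $h^0$: the paper argues by taking determinants or double duals of $\cI_1\otimes p^*L\to\cI_2$, whereas you compose with $\cI_2\hookrightarrow\cO$ and use the canonical isomorphism $\hom_{\pi_T}(\cI_1,\cO)\cong\cO_T$ --- both are valid and yield the same canonical trivialisation.
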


\begin{proof}
By \eqref{homjp} we may apply Proposition \ref{prop2} to the degeneracy locus $Z$ of $R\hom_\pi(\cI_1,\cI_2)$ by setting $\cF=R\hom(\cI_1,\cI_2)$. By \eqref{uro} and the Nakayama lemma $\cF$ is quasi-isomorphic to a 2-term complex of vector bundles.

As sets $Z\cong S^{[n_1,n_2]}$ by \eqref{homjp}. Over the degeneracy locus $Z$ we have the exact sequence \eqref{h*} with $h^0$ a rank one locally free sheaf, i.e. a line bundle $L$. Thus over $Z\times S$ we obtain a map
$$
\cI_1\otimes p^*L\To \cI_2
$$
which is nonzero on any fibre of $p$. Taking determinants or double duals shows that $L$ is trivial, $h^0\cong\cO_{S^{[n_1,n_2]}}$, and we get a map $\cI_1\to \cI_2$ whose classifying map gives a morphism $Z\to S^{[n_1,n_2]}$.

Conversely, since $p_*\;\hom(\cI_1,\cI_2)=\cO$ over $S^{[n_1,n_2]}$, the latter lies in the degeneracy locus of $R\hom_\pi(\cI_1,\cI_2)$, i.e. $S^{[n_1,n_2]}\subset Z$. It is clear these two maps are inverses.

The rest follows from Proposition \ref{prop2}, simplified as in Proposition \ref{simplah}, and the fact that $h^0\cong\cO_{S^{[n_1,n_2]}}$.
\end{proof}

\begin{rmks} 
In Theorem \ref{nest} we will identify our virtual cycle with that of \cite{GSY1}. The formula \eqref{pfwd} for the pushforward of this cycle was conjectured in \cite{GSY1}, proved there for toric surfaces, and shown to be true for more general surfaces when integrated against some natural classes.

From \eqref{dg} one can work out that the dual of the first arrow in \eqref{nhpot} is
$$
\ext^1_p(\cI_1,\cI_1)\oplus\ext^1_p(\cI_2,\cI_2)\rt{(\iota,\,-\iota^*)}\ext^1_p(\cI_1,\cI_2),
$$
where $\iota\colon\cI_1\to\cI_2$ is the natural inclusion. This complex is therefore the virtual tangent bundle of our perfect obstruction theory on $S^{[n_1,n_2]}$.
\end{rmks}

\section{Removing $H^1(\cO_S)$ and $H^2(\cO_S)$ on arbitrary surfaces} \label{arb}
When $h^{0,1}(S)>0$ the virtual cycle constructed in the last section becomes zero due to a trivial $H^1(\cO_S)$ piece in its obstruction sheaf. And when $h^{0,2}(S)>0$ the perfect complex $R\hom_\pi(\cI_1,\cI_2)$ over $S^{[n_1]}\times S^{[n_2]}$ becomes \emph{3-term}, as it has nonzero $h^2=\ext^2_\pi(\cI_1,\cI_2)$.

So we want to modify $R\hom_\pi(\cI_1,\cI_2)$ with $H^1(\cO_S)$ and $H^2(\cO_S)$ terms. The correct geometric way to do this is to take the product of our ambient space $S^{[n_1]}\times S^{[n_2]}$ with $\Jac(S)$ --- we do this in Section \ref{jack} when $h^{0,2}(S)=0$.\footnote{When $h^{0,2}(S)>0$ one should do the same with the \emph{derived scheme} $\Jac(S)$ with nonzero obstruction bundle $H^2(\cO_S)\otimes\cO$. We don't go this far.} In this Section we use a more \emph{ad hoc} fix which is less geometric but appears to give stronger results.

To describe it, consider the natural composition
$$
\xymatrix@C=0pt@R=16pt{
H^2(\cO_S)\otimes\_\C\cO_{S^{[n_1]}\times S^{[n_2]}} \ar[rrd]& \cong\ R^2\pi_*\;\cO\ \cong\ R^2\pi_*\;\cI_2\ = & \!\ext^2_\pi(\cO,\cI_2)\  \ar[d]^{\iota^*_1} \\
&&\!\ext^2_\pi(\cI_1,\cI_2),}\vspace{-9mm}
$$
\beq{ont}
\eeq
induced by $\iota\_1\colon\cI_1\to\cO_{S^{[n_1]}\times S^{[n_2]}\times S}$\;. Since $\ext^3_\pi(\cO/\cI_1,\cI_2)=0$ (because $\pi$ has relative dimension 2) the composition \eqref{ont} is \emph{surjective}. Therefore, if there was a lifting
\beq{find}
\xymatrix@R=18pt{
H^2(\cO_S)\otimes\cO[-2]\ar@{->>}[dr]\ar@{.>}[d] \\
R\hom_\pi(\cI_1,\cI_2) \ar[r]& \ext^2_\pi(\cI_1,\cI_2)[-2],\!}
\eeq
then the cone on the dotted arrow \eqref{find} would have no $h^2$ and so would be quasi-isomorphic to a 2-term complex of vector bundles. So we could replace $R\hom_\pi(\cI_1,\cI_2)$ by this cone: they have the same $h^0$ jumping locus $S^{[n_1,n_2]}$ (this is proved in Lemma \ref{gw}; it is not true for the $h^{\ge1}$ jumping loci, however) and the same Chern classes. Assuming we could find a similar lift for $H^1(\cO_S)\otimes\cO[-1]$ as well, applying Theorem \ref{prop} to the cone would give the following.

\begin{thm} \label{aim}
Let $S$ be any smooth projective surface. The 2-step nested Hilbert scheme $S^{[n_1,n_2]}$ carries a natural\,\footnote{Naturality will follow from the fact that the lift \eqref{find} is canonical on restriction to $S^{[n_1,n_2]}\subset S^{[n_1]}\times S^{[n_2]}$; see \eqref{rite}.} perfect obstruction theory
and virtual cycle
$$
\big[S^{[n_1,n_2]}\big]^{\vir}\,\in\,A_{n_1+n_2}\big(S^{[n_1,n_2]}\big)
$$
whose pushforward to $S^{[n_1]}\times S^{[n_2]}$ is
$c_{n_1+n_2}\big(R\hom_{\pi}(\cI_1,\cI_2)[1]\big).$
\end{thm}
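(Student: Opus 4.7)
The plan is to construct a two-term perfect complex $E^\bullet$ on (an affine-bundle cover of) $S^{[n_1]}\times S^{[n_2]}$ whose maximal kernel-degeneracy locus in the sense of Lemma \ref{h0base} is exactly $S^{[n_1,n_2]}$, whose Chern classes match those of $R\hom_\pi(\cI_1,\cI_2)[1]$, and then apply Theorem \ref{prop}. The complex $E^\bullet$ is built to kill the two obstructions to using $R\hom_\pi(\cI_1,\cI_2)$ directly: the $h^2$-summand identified with $H^2(\cO_S)\otimes\cO$ via \eqref{ont}, and an analogous trivial $H^1(\cO_S)$-piece of $h^1$.

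Concretely, I would first produce the analogue of the surjection \eqref{ont} for the $H^1$-piece, using the same recipe: base-change $R^1\pi_*\cI_2\cong H^1(\cO_S)\otimes\cO$ together with the map $\iota_1^*\colon R\hom_\pi(\cO,\cI_2)\to R\hom_\pi(\cI_1,\cI_2)$. Assume momentarily that we can lift both this map and \eqref{find} through the truncation morphisms $R\hom_\pi(\cI_1,\cI_2)\to\ext^i_\pi(\cI_1,\cI_2)[-i]$ to produce a single morphism
$$\phi\,\colon\,H^1(\cO_S)\otimes\cO[-1]\,\oplus\,H^2(\cO_S)\otimes\cO[-2]\,\longrightarrow\,R\hom_\pi(\cI_1,\cI_2).$$
The cone $E^\bullet$ of $\phi$ then has no $h^2$, and its $h^1$ is the ``reduced'' part of $\ext^1_\pi(\cI_1,\cI_2)$; in particular $E^\bullet$ is quasi-isomorphic to a two-term complex of vector bundles. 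By Lemma \ref{gw} its maximal $h^0$-jumping locus is still $S^{[n_1,n_2]}$, and in K-theory $[E^\bullet]$ differs from $[R\hom_\pi(\cI_1,\cI_2)]$ by an integer multiple of $[\cO]$, so all higher Chern classes agree. Theorem \ref{prop} applied to $E^\bullet$ then gives a perfect obstruction theory on $S^{[n_1,n_2]}$ whose virtual cycle pushes forward to $c_{n_1+n_2}\big(E^\bullet[1]\big)=c_{n_1+n_2}\big(R\hom_\pi(\cI_1,\cI_2)[1]\big)$, as required.

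The main obstacle is that $\phi$ need not exist globally on $S^{[n_1]}\times S^{[n_2]}$: the truncation triangles may fail to split. I would overcome this by the splitting trick of Section \ref{splitpr}, pulling back along a natural affine bundle $\mu\colon\widetilde A\to S^{[n_1]}\times S^{[n_2]}$ designed so that a tautological lift $\phi$ exists on $\widetilde A$. Since $\mu$ is an affine bundle, $\mu^*$ is an isomorphism on Chow groups, and the pushforward formula for the virtual cycle descends from $\widetilde A$ to the ambient space unchanged.

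Finally, for naturality of the perfect obstruction theory on $S^{[n_1,n_2]}$ itself, the crucial input is the footnote to the theorem: \emph{on restriction to $S^{[n_1,n_2]}\subset S^{[n_1]}\times S^{[n_2]}$ the lift \eqref{find} becomes canonical}, as will be exhibited by \eqref{rite}. Consequently the choices made in constructing $\phi$ on $\widetilde A$ become immaterial after restriction to $\mu^{-1}(S^{[n_1,n_2]})$, and the POT and virtual cycle descend through $\mu$ to canonical objects on $S^{[n_1,n_2]}$. The technical heart of the argument will be constructing the splitting-trick affine bundle together with its canonical $\phi$, and verifying that the POT obtained from Theorem \ref{prop} on $\widetilde A$ indeed descends, with the restriction on $S^{[n_1,n_2]}$ matching the intrinsic lift.
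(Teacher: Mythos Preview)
Your proposal is correct and follows essentially the same approach as the paper: build the map $\phi$ (the paper's $\iota_1^*\circ\Phi$, equation \eqref{coney}) after pulling back to an affine bundle of splittings, take the cone to obtain a two-term complex with the same $h^0$-jumping locus (Lemma \ref{gw}) and the same Chern classes, apply Theorem \ref{prop}, and then descend via the canonical lift \eqref{rite} on $S^{[n_1,n_2]}$. One small correction: $R^1\pi_*\cI_2$ is not isomorphic to $H^1(\cO_S)\otimes\cO$ but only surjects onto it via $\iota_2$ (see diagram \eqref{dig}); this does not affect your argument, since what you need is exactly a splitting of that surjection, which is what the affine bundle parametrises. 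The step you correctly flag as the technical heart---showing the perfect obstruction theory actually descends from the affine bundle to $S^{[n_1,n_2]}$---is the content of Proposition \ref{rhocom}, which the paper proves by comparing four related degeneracy-locus presentations \eqref{1}--\eqref{4} and tracking which parts of the derivative diagram depend on the ambient thickening versus only on the restriction to $\rho^{-1}(S^{[n_1,n_2]})$.
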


Unfortunately the lifting \eqref{find} does not exist in general, so to prove the Theorem we will use a trick borrowed from the splitting principle in topology: we pull back to a bigger space $\cA\to S^{[n_1]}\times S^{[n_2]}$ where there is such a splitting, then show the passage does not destroy any information.

For the rest of this Section we carry this out, dealing similarly with $H^1(\cO_S)$ at the same time.
\medskip

We denote by $R^{\ge1}\pi_*\;\cO$ the truncation $\tau^{\ge1}R\pi_*\;\cO$. Choosing once and for all a splitting of $R\Gamma(\cO_S)$ into its cohomologies induces a splitting
\beq{1stsplit}
R^{\ge1}\pi_*\;\cO\ \cong\ H^1[-1]\ \oplus\ H^2[-2],
\eeq
where
$$
H^i\ \colon=\ H^i(\cO_S)\otimes\cO_{S^{[n_1]}\times S^{[n_2]}}
$$
is the trivial vector bundle of rank $h^{0,i}(S)$ over $S^{[n_1]}\times S^{[n_2]}$. As described above, we wish to map this to $R\hom_{\pi}(\cI_1,\cI_2)$ in an appropriate way, which we will do by factoring through the map
\beq{amic}
\iota_1^*\,\colon\ R\pi_*\;\cI_2\To R\hom_{\pi}(\cI_1,\cI_2)
\eeq
induced by $\iota\_1\colon\cI_1\to\cO$. We relate $R\pi_*\;\cI_2$ and $R^{\ge1}\pi_*\;\cO$ by the commutative diagram of exact triangles
$$
\xymatrix@=18pt{
& \cO \ar[d]_{h^0}\ar@{=}[r]& \cO \ar[d] \\
R\pi_*\;\cI_2 \ar@{=}[d]\ar[r]& R\pi_*\;\cO \ar[d]\ar[r]& \pi_*\big(\cO/\cI_2\big) \ar[d] \\
R\pi_*\;\cI_2 \ar[r]& R^{\ge1}\pi_*\;\cO \ar[r]& \cO^{[n_2]}/\cO.\!}
$$
Here $\cO^{[n_2]}:=\pi_*(\cO/\cI_2)$ is the tautological vector bundle, and the top two rows induce the bottom one. This gives the exact triangle
\beq{split?}
\xymatrix{
\cO^{[n_2]}/\cO\,[-1] \ar[r]& R\pi_*\;\cI_2 \ar@<.5ex>[r]& R^{\ge1}\pi_*\;\cO \ar@{.>}@<.5ex>[l]}
\eeq
which we want to split (to then compose with \eqref{amic}).  To write this more explicitly, we split $R^{\ge1}\pi_*\;\cO$ by \eqref{1stsplit} and fix a 2-term locally free resolution $F_1\to F_2$ of $R\pi_*\;\cI_2$, with $F_i$ in degree $i$. Then \eqref{split?} gives
\beq{dig}
\xymatrix{
& \cO^{[n_2]}/\cO \\
0 \ar[r]& R^1\pi_*\;\cI_2 \ar[r]^-{h^1}\ar@{->>}[d]_{\iota\_2}\ar@{<-^)}[u]-U& F_1 \ar[r]& F_2 \ar[r]^-{h^2}& R^2\pi_*\;\cI_2 \ar@{=}[d]\ar[r]& 0 \\
& H^1 \ar@{.>}@/^{-2ex}/[u]_{\phi_1} &&& H^2,\!\!\! \ar@{.>}[ul]^-{\phi_2}}
\eeq
where $\iota\_2\colon\cI_2\to\cO$ and the left hand column is a short exact sequence.
Choices of splittings $\phi_1,\,\phi_2$ would induce a splitting of \eqref{split?}. 

Since the $H^i$ are \emph{free}, splittings $(\phi_1,\phi_2)$ of \eqref{dig} exist \emph{locally}. But unfortunately we can show they do \emph{not} exist globally in general. So we use a trick, pulling back to a bigger space $\cA\to S^{[n_1]}\times S^{[n_2]}$ where there is a tautological such splitting.

\subsection{A splitting trick.} \label{splitpr}
Inside the total space of the bundle
$$
\cE\ :=\ (H^1)^*\!\otimes\!R^1\pi_*\;\cI_2\ \ \oplus\ \ (H^2)^*\!\otimes\!F_2
$$
over $S^{[n_1]}\times S^{[n_2]}$ there is a natural affine bundle\footnote{Modelled on the vector bundle $(H^1)^*\otimes\frac{\cO^{[n_2]}}{\cO}\ \ \oplus\,\ (H^2)^*\otimes\;\ker(h^2)$. Bhargav Bhatt pointed out that we could have used the Jouanolou trick here to find an affine bundle whose total space is an affine variety on which therefore there exist (non-canonical) splittings.} $\cA\subset\cE$ of pointwise splittings $(\phi_1,\phi_2)$ of \eqref{dig}. That is, the surjective map of locally free sheaves
$$
(1\otimes\iota\_2\,,1\otimes h^2)\,\colon\ \cE\To\ \End H^1\ \oplus\ \End H^2
$$
induces a map on the total spaces of the associated vector bundles. Taking the inverse image of the section  $\big(\!\id_{H^1},\id_{H^2}\!\big)$ defines the affine bundle
$$
\rho\ \colon\ \cA\To S^{[n_1]}\times S^{[n_2]}.
$$

Pulling \eqref{dig} back to $\cA$, it now has a \emph{canonical} tautological splitting $\Phi=(\phi_1,\phi_2)$, giving
\beq{donesplit}
\Phi\,:\ \rho^*H^1[-1]\ \oplus\ \rho^*H^2[-2]\To\rho^*R\pi_*\;\cI_2
\eeq
as sought in \eqref{split?}. That is, composing $\Phi$ with (the pullback by $\rho^*$ of) $\iota_2\colon R\pi_*\;\cI_2\to R^{\ge1}\pi_*\;\cO$ gives the identity: $\iota_2\circ\Phi=\id$.

So finally we may compose \eqref{donesplit} with (the pullback by $\rho^*$ of) $\iota_1^*$ \eqref{amic} to give a map
\beq{coney}
\iota_1^*\circ\Phi\,\colon\ \rho^*R^{\ge1}\pi_*\;\cO\To\rho^*R\hom_\pi(\cI_1,\cI_2).
\eeq
By construction, on taking $h^2$ it
induces (the pullback by $\rho^*$ of) the surjection \eqref{ont}. Therefore the cone $C(\iota_1^*\circ\Phi)$ on \eqref{coney} has no $h^2$ and is quasi-isomorphic to a 2-term complex of locally free sheaves.

We next give a more explicit description of $C(\iota_1^*\circ\Phi)$. It is nicest over $\rho^{-1}\big(S^{[n_1,n_2]}\big)$, since on $S^{[n_1,n_2]}$ the natural inclusion $\iota\colon\cI_1\to\cI_2$ induces a \emph{canonical} lift given by the composition
\beq{rite}
R^{\ge1}\pi_*\cO\To R\pi_*\cO\rt{\id}R\hom_\pi(\cI_1,\cI_1)\rt{\iota}R\hom_\pi(\cI_1,\cI_2).
\eeq

\begin{lem} \label{invariant}
The cone $C(\iota_1^*\circ\Phi)$ can be represented by a 3-term complex of vector bundles\footnote{This can be truncated to a 2-term complex of vector bundles by removing the third term and replacing the second term by the kernel of the surjection $(\rho^*\sigma_2,\psi_2)$.}
\beq{tri}
\xymatrix@R=0pt@C=35pt{
\rho^*E_0 \ar[r]^{\rho^*\sigma_1}& \rho^*E_1 \ar[r]^{\rho^*\sigma_2}&
\rho^*E_2, \\
\oplus & \oplus \\
\rho^*H^1 \ar[ruu]_{\psi_1} & \rho^*H^2 \ar[ruu]_{\psi_2}}
\eeq
where $E_0\to E_1\to E_2$ represents $R\hom_\pi(\cI_1,\cI_2)$.

Moreover the maps may be chosen so that, on restriction to $\rho^{-1}\big(S^{[n_1,n_2]}\big)$, they are the pullbacks by $\rho^*$ of maps on $S^{[n_1,n_2]}$, and $C(\iota_1^*\circ\Phi)$ is the pullback $\rho^*\;C$ of the cone $C$ on the composition \eqref{rite}.
\end{lem}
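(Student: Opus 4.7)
The plan is to resolve $R\hom_\pi(\cI_1,\cI_2)$ by a 3-term locally free complex, lift the derived morphism $\iota_1^*\circ\Phi$ to an honest chain map using projectivity of $H^1,H^2$, and read off the cone from the standard mapping-cone formula. The subtle point is compatibility with the canonical construction \eqref{rite} on $S^{[n_1,n_2]}$.

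First, since $R\hom_\pi(\cI_1,\cI_2)$ is perfect with cohomology concentrated in degrees $0,1,2$ (Serre duality on $S$ kills $\ext^{\ge3}$), choose a locally free resolution $E_0\rt{\sigma_1}E_1\rt{\sigma_2}E_2$. Because the trivial bundles $H^1,H^2$ are projective in coherent sheaves, a derived morphism $H^i[-i]\to\rho^*E_\bullet$ is the same as an underived map $H^i\to h^i(\rho^*E_\bullet)$, and this lifts through the surjection $\ker(\rho^*\sigma_{i+1})\twoheadrightarrow h^i$ to a genuine chain map $\psi_i\colon\rho^*H^i\to\rho^*E_i$. Applying this to both summands of $\iota_1^*\circ\Phi$ and invoking the standard formula $\Cone(f)^n=A^{n+1}\oplus B^n$ delivers the complex \eqref{tri}, with differentials as pictured. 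Since $h^2$ of the cone vanishes (the whole point of $\Phi$), the rightmost map $(\psi_2,\rho^*\sigma_2)$ is surjective, permitting the truncation to a 2-term complex as in the footnote.

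For the second claim, write $f:=\iota_1^*\circ\Phi$ and $g$ for the canonical composition \eqref{rite}, both viewed as derived morphisms $R^{\ge1}\pi_*\cO\to R\hom_\pi(\cI_1,\cI_2)$ over $S^{[n_1,n_2]}$. To see $f=\rho^*g$ over $\rho^{-1}(S^{[n_1,n_2]})$, I would post-compose with the morphism $R\hom_\pi(\cI_1,\cI_2)\to R\hom_\pi(\cI_1,\cO)$ induced by $\iota_2$. On one hand, $\Phi$ is a section of $R\pi_*\cI_2\to R^{\ge1}\pi_*\cO$ from \eqref{split?}, so $R^{\ge1}\pi_*\cO\xrightarrow{\Phi}R\pi_*\cI_2\xrightarrow{\iota_2}R\pi_*\cO$ is a section of $R\pi_*\cO\to R^{\ge1}\pi_*\cO$; any two such sections differ by a derived morphism into $\cO[0]$, and $\Hom(R^{\ge1}\pi_*\cO,\cO)=0$ for degree reasons, so this section must coincide with the canonical inclusion $\iota_{\mathrm{can}}$ coming from \eqref{1stsplit}. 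On the other hand, $\iota_1=\iota_2\circ\iota$ on $S^{[n_1,n_2]}$, so the analogous postcomposition with $g$ yields $\iota_1^*\circ\iota_{\mathrm{can}}$ as well. Hence the difference $f-\rho^*g$ becomes zero after postcomposing with $\iota_2$, and therefore factors through the shifted fiber $R\hom_\pi(\cI_1,\cO/\cI_2)[-1]$.

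The final step is the Hom-vanishing $\Hom\!\big(\rho^*R^{\ge1}\pi_*\cO,\,\rho^*R\hom_\pi(\cI_1,\cO/\cI_2)[-1]\big)=0$ on $\rho^{-1}(S^{[n_1,n_2]})$. Since $\cI_1\subset\cI_2$ there forces the composition $\cI_1\to\cO\to\cO/\cI_2$ to vanish, the sheaves $h^i R\hom_\pi(\cI_1,\cO/\cI_2)$ admit useful simplifications which, combined with the fact that $R^{\ge1}\pi_*\cO$ has no $h^0$, force the vanishing. Once $f=\rho^*g$ is established in the derived category, choose chain-level lifts $\bar\psi_1,\bar\psi_2$ of $g$ on $S^{[n_1,n_2]}$ and extend them to $\cA$ by projectivity of the $H^i$; the resulting $\psi_i$ restrict to $\rho^*\bar\psi_i$ on $\rho^{-1}(S^{[n_1,n_2]})$, so $C(\iota_1^*\circ\Phi)$ over this locus agrees with $\rho^*C$. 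I expect the concluding Hom-vanishing to be the main technical hurdle; the rest is bookkeeping with the mapping cone and projectivity.
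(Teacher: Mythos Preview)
Your construction of the 3-term complex \eqref{tri} via projectivity of the trivial bundles $H^1,H^2$ is fine, and indeed cleaner than the paper's dualisation trick. The real issue is the second half of your argument, where you try to show $f:=\iota_1^*\circ\Phi$ equals $\rho^*g$ (with $g$ the canonical map \eqref{rite}) on $\rho^{-1}(S^{[n_1,n_2]})$.

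Your strategy relies on two Hom-vanishings, and both fail. The first, $\Hom(R^{\ge1}\pi_*\cO,\cO)=0$, unpacks (using \eqref{1stsplit}) to $H^1(X,\cO)^{h^{0,1}}\oplus H^2(X,\cO)^{h^{0,2}}$ for $X=\rho^{-1}(S^{[n_1,n_2]})$; since $\rho$ is affine these are cohomologies of $\rho_*\cO_X$ on the \emph{projective} scheme $S^{[n_1,n_2]}$, and there is no reason for them to vanish. More seriously, your concluding vanishing $\Hom\big(R^{\ge1}\pi_*\cO,\,R\hom_\pi(\cI_1,\cO/\cI_2)[-1]\big)=0$ is simply false. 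Its $H^1[-1]$-summand contributes $\Gamma\big(\hom_\pi(\cI_1,\cO/\cI_2)\big)^{h^{0,1}}$. Take $S$ an abelian surface and $n_1=n_2=1$: then $S^{[1,1]}\cong S$, $\cI_1=\cI_2$ is the diagonal ideal, and $\hom_\pi(\cI_\Delta,\cO_\Delta)\cong(\cI_\Delta/\cI_\Delta^2)^*\cong T_S\cong\cO_S^{\oplus2}$, which has nonzero global sections. So $f-\rho^*g$ may well factor through $R\hom_\pi(\cI_1,\cO/\cI_2)[-1]$ nontrivially, and your argument cannot conclude. (Your remark that $\cI_1\to\cO\to\cO/\cI_2$ vanishes is irrelevant: that only says one particular element of $\hom_\pi(\cI_1,\cO/\cI_2)$ is zero, not all of them.)

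The paper avoids this entirely by a direct diagram chase rather than uniqueness. The key observation is that $R\pi_*\cI_2$ is concentrated in degrees $\ge1$ (since $n_2>0$ forces $H^0(I_2)=0$), so $\Hom(R\pi_*\cI_2,\cO[0])=0$ and hence $\iota_2\colon R\pi_*\cI_2\to R\pi_*\cO$ factors as $\iota_{\mathrm{can}}\circ c$ through $R^{\ge1}\pi_*\cO$. Then $\iota_2\circ\Phi=\iota_{\mathrm{can}}\circ c\circ\Phi=\iota_{\mathrm{can}}$, and one checks directly that $\iota_*\circ\id\circ\iota_2=\iota_1^*$ as maps $R\pi_*\cI_2\to R\hom_\pi(\cI_1,\cI_2)$ (both are ``precompose with $\iota_1$''), giving $\iota_1^*\Phi=\iota_*\circ\id\circ\iota_{\mathrm{can}}=\rho^*g$ immediately. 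No Hom-vanishing into $R\hom_\pi(\cI_1,\cO/\cI_2)$ is needed.

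There is also a secondary gap in your final paragraph: even granting $f=\rho^*g$ in the derived category, extending chain-level lifts $\rho^*\bar\psi_i$ from $\rho^{-1}(S^{[n_1,n_2]})$ to $\cA$ by projectivity gives \emph{some} chain map on $\cA$, but you have not checked it still represents $f=\iota_1^*\Phi$ there. The paper handles this by taking arbitrary chain representatives of $f$ on $\cA$ and of $\rho^*g$ on $\rho^{-1}(S^{[n_1,n_2]})$, finding a homotopy between them over $\rho^{-1}(S^{[n_1,n_2]})$, extending the homotopy to $\cA$ (using sufficiently negative resolutions so that restriction of Hom-groups is surjective), and then modifying the first representative by this global homotopy.
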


\begin{rmk} Recall that by our notation convention, we are using the same notation $\rho$ for the restriction of $\rho$ to $\rho^{-1}\big(S^{[n_1,n_2]}\big)$.

The Lemma tells us that on $\rho^{-1}\big(S^{[n_1,n_2]}\big)$, the explicit resolution \eqref{tri} can be taken to be constant on the fibres of $\rho$ --- i.e. independent on the choice of lifts $(\phi_1,\phi_2)$ of \eqref{dig} --- since, after composition with $\iota_1^*$, all lifts become quasi-isomorphic to the canonical one \eqref{rite} on $\rho^{-1}\big(S^{[n_1,n_2]}\big)$.
\end{rmk}

\begin{proof}
First we show that $C(\iota_1^*\circ\Phi)$ restricted to $\rho^{-1}\big(S^{[n_1,n_2]}\big)$ is quasi-isomorphic to $\rho^*\;C$.
Consider the diagram
$$
\xymatrix{
\rho^*R^{\ge1}\pi_*\cO \ar[r]\ar[d]_\Phi& \rho^*R\pi_*\cO \ar[r]^-{\id}& \rho^*R\hom_\pi(\cI_1,\cI_1) \ar[r]^{\iota}& \rho^*R\hom_\pi(\cI_1,\cI_2) \\
\rho^*R\pi_*\;\cI_2 \ar[ru]_(.6){\iota\_2}\ar[urrr]_{\iota_1^*}\ar@/^{-2ex}/[u]}
$$
on $\rho^{-1}\big(S^{[n_1,n_2]}\big)$, where we have the canonical map $\iota\colon\rho^*\cI_1\into\rho^*\cI_2$.
Here the curved arrow is from \eqref{split?} and makes the first triangle commute. Since by construction $\Phi$ is a right inverse to this map, the first triangle also commutes if we start at the top left corner. Since the second triangle also commutes, everything does, which means that $\iota_1^*\Phi$ equals the composition of the arrows along the top row.\medskip

Next we resolve $R\hom_\pi(\cI_1,\cI_2)^\vee$ by a complex of \emph{very negative} vector bundles $G\udot$. This means that they behave like projectives in the abelian category of coherent sheaves. In particular, by making them sufficiently negative, we can arrange that the map $(\iota_1^*\Phi)^\vee$ can be represented by a genuine map of complexes
\beq{firs}
\rho^*G\udot\To\rho^*(H^1)^*[1]\ \oplus\ \rho^*(H^2)^*[2],
\eeq
and, on $S^{[n_1,n_2]}$,
the dual of the composition
\eqref{rite} is represented by a genuine map of complexes
\beq{seco}
G\udot\To(H^1)^*[1]\,\oplus\,(H^2)^*[2].
\eeq
On restriction to $\rho^{-1}\big(S^{[n_1,n_2]}\big)\subset\cA$, we have shown that the first map \eqref{firs} is quasi-isomorphic to the pullback by $\rho^*$ of the second \eqref{seco}. Again we may assume we took the $G^i$ sufficiently negative that --- by the usual proof that quasi-isomorphic maps of complexes of projectives are homotopic --- there is a homotopy between \eqref{firs} and $\rho^*\;$\eqref{seco}. This homotopy is a pair of maps
$$
\rho^*G^0\To\rho^*(H^1)^*, \qquad \rho^*G^{-1}\To\rho^*(H^2)^*,
$$
over $\rho^{-1}\big(S^{[n_1,n_2]}\big)$. By the sufficient negativity of the $G^i$ they can be extended\footnote{For $N\gg0$ the restriction $\Hom\_{\cA}(G(-N),F)\to\Hom_{\rho^{-1}(S^{[n_1,n_2]})}(G(-N),F)$ is \emph{onto} for locally free $F$ and $G$.} to maps on all of $\cA$. Modifying \eqref{firs} by this homotopy, dualising and then truncating $(G\udot)^\vee$ to a 3-term complex now gives \eqref{tri}.
\end{proof}

So $C(\iota_1^*\circ\Phi)$ is quasi-isomorphic to the 2-term complex of vector bundles
\beq{EF}
\rho^*(E_0\oplus H^1)\rt\sigma F,
\eeq
where $F$ is defined to be the kernel
\beq{Fdef}
0\To F\To\rho^*(E_1\oplus H^2)\To\rho^*E_2\To0.
\eeq
And over $\rho^{-1}\big(S^{[n_1,n_2]}\big)$, the complex \eqref{EF} can be seen as a pull back by $\rho^*$.

\begin{lem}\label{gw}
The $h^0$ jumping locus of $C(\iota_1^*\circ\Phi)$ is the same as that of $\rho^*R\hom_\pi(\cI_1,\cI_2)$, i.e. it is $\rho^{-1}\big(S^{[n_1,n_2]}\big)$.
\end{lem}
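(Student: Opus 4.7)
The plan is to use the long exact sequence of cohomology from the distinguished triangle defining $C$, and to show that the connecting map $\beta_1$ is injective on every derived fibre. This will force $\dim h^0(C|_a)=\dim\Hom(I_1,I_2)$, matching the $h^0$ of $\rho^*R\hom_\pi(\cI_1,\cI_2)$ and identifying the jumping locus as $\rho^{-1}(S^{[n_1,n_2]})$.

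First I would take the derived fibre at a closed point $a\in\cA$ over $(I_1,I_2)$ of the triangle
$$\rho^*R^{\ge1}\pi_*\cO \xrightarrow{\iota_1^*\Phi} \rho^*R\hom_\pi(\cI_1,\cI_2)\To C.$$
Since $h^0(\rho^*R^{\ge1}\pi_*\cO|_a)=0$, the cohomology long exact sequence gives
$$0\To\Hom(I_1,I_2)\To h^0(C|_a)\To H^1(\cO_S)\xrightarrow{\beta_1|_a}\Ext^1(I_1,I_2),$$
so $\dim h^0(C|_a)=\dim\Hom(I_1,I_2)+\dim\ker\beta_1|_a$. As the first summand is exactly the $h^0$ of $\rho^*R\hom_\pi(\cI_1,\cI_2)|_a$, it is enough to prove $\ker\beta_1|_a=0$ for every $a$.

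By construction $\beta_1|_a$ factors as $H^1(\cO_S)\xrightarrow{\phi_1|_a}H^1(I_2)\xrightarrow{\iota_1^*}\Ext^1(I_1,I_2)$, and $\phi_1|_a$ is injective since it splits the surjection $H^1(I_2)\twoheadrightarrow H^1(\cO_S)$ of \eqref{dig}. The heart of the argument is the containment
$$\ker\iota_1^*\ \subseteq\ \ker\bigl(H^1(I_2)\twoheadrightarrow H^1(\cO_S)\bigr)\ \cong\ \cO^{[n_2]}/\cO\big|_{(I_1,I_2)}.$$
I would establish this by applying the long exact sequence for $0\to\cI_1\to\cO\to\cO/\cI_1\to 0$ to $\Hom(-,I_2)$, which identifies $\ker\iota_1^*$ with the image of $\Ext^1(\cO/I_1,I_2)\to H^1(I_2)$. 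Post-composing this with the projection $H^1(I_2)\to H^1(\cO_S)$ yields a map factoring through $\Ext^1(\cO/I_1,\cO)$, and the latter vanishes by the local Koszul computation $\ext^1_{\mathrm{loc}}(\text{zero-dimensional},\text{locally free})=0$ on the smooth surface $S$.

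Once the containment is in hand, the conclusion is immediate: $\phi_1|_a(H^1(\cO_S))$ is a linear complement of $\cO^{[n_2]}/\cO|_{(I_1,I_2)}$ in $H^1(I_2)$, so it intersects $\ker\iota_1^*$ trivially. Hence $\beta_1|_a$ is injective, $\dim h^0(C|_a)=\dim\Hom(I_1,I_2)$, and the $h^0$-jumping locus of $C$ equals that of $\rho^*R\hom_\pi(\cI_1,\cI_2)$, namely $\rho^{-1}(S^{[n_1,n_2]})$. The main subtlety to pin down is the identification of $\ker\iota_1^*$ and the vanishing of $\Ext^1(\cO/I_1,\cO)$; everything else is a formal consequence of the splitting property of $\phi_1$.
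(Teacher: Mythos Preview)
Your argument is essentially the same as the paper's: both reduce to the injectivity of $\iota_1^*\circ\Phi$ on $h^1$, and both deduce this from the vanishing $\Ext^1(\cO/I_1,\cO)=0$ (in families, $\ext^1_{\pi}(\cO/\cI_1,\cO)=0$). The paper packages the step slightly differently, post-composing with $\iota_2\colon\ext^1(\cI_1,\cI_2)\to\ext^1(\cI_1,\cO)$ and using $\iota_2\circ\Phi=\id$ to reduce directly to the injectivity of $\iota_1^*\colon R^1\pi_*\cO\to\ext^1(\cI_1,\cO)$; your containment $\ker\iota_1^*\subseteq\ker\iota_2$ is the same observation read the other way.

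There is one genuine gap. You only check dimensions of $h^0$ on derived fibres at closed points, which shows the two jumping loci agree as \emph{sets}. But the lemma is used immediately afterwards to identify $\rho^{-1}(S^{[n_1,n_2]})$ with the degeneracy locus of \eqref{4} as a \emph{scheme}, so that Theorem~\ref{prop} can be applied. Equal pointwise ranks do not in general force equal scheme structures on degeneracy loci. The paper fixes this by running the long exact sequence over an arbitrary base $f\colon T\to\cA$, proving $h^0\big(f^*C(\iota_1^*\circ\Phi)\big)\cong\hom_{\pi_T}(\cI_1,\cI_2)$ functorially, and then invoking Lemma~\ref{h0base}, which characterises the degeneracy locus by its functor of points. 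Your argument carries over to this setting verbatim: the splitting property of $\phi_1$ is a statement about sheaves on $T$, and the key vanishing becomes $\ext^1_{\pi_T}(\cO/\cI_1,\cO)=0$, which holds for the same reason. So the fix is to replace ``closed point $a$'' by ``arbitrary $T\to\cA$'' throughout and conclude via Lemma~\ref{h0base}.
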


\begin{proof}
Given any map $T\rt{f}\cA\to S^{[n_1]}\times S^{[n_2]}$, we denote the basechange of $\pi$ by
$$
\pi\_T\,\colon\, T\times S\To T.
$$
We denote the pull backs of $\cI_1,\,\cI_2$ to $T\times S$ by the same notation. Pulling $C(\iota_1^*\circ\Phi)$ back to $T$, the long exact sequence associated to the cone becomes
$$
0\To\hom_{\pi\_T}(\cI_1,\cI_2)\To h^0\big(f^*C(\iota_1^*\circ\Phi)\big)\To R^1\pi\_{T*}\cO\rt{\iota_1^*\Phi}\ext^1_{\pi\_T}(\cI_1,\cI_2).
$$
It remains to prove that the last arrow is an injection, since that implies $\hom_{\pi\_T}(\cI_1,\cI_2)\cong h^0\big(f^*C(\iota_1^*\circ\Phi)\big)$ on any $T$, to which we can apply Lemma \ref{h0base} to conclude.

The last arrow is the composition $\iota_1^*\circ\Phi$ in the diagram
$$\xymatrix{
R^1\pi\_{T*}\;\cO \ar@{.>}@/^{2ex}/[r]^\Phi& R^1\pi\_{T*}\;\cI_2 \ar[l]^{\iota\_2}\ar[d]^{\iota_1^*} \\
\ext^1_{\pi\_T}(\cI_1,\cO) \ar@<-1ex>@{<-^)}[u]+<0pt,-12pt>^-{\iota_1^*} & \ext^1_{\pi\_T}(\cI_1,\cI_2). \ar[l]_{\iota\_2}}
$$
To prove it is an injection it is sufficient to do so after composing with $\iota\_2$ along the bottom. Since the diagram commutes and $\Phi$ is a right inverse of the $\iota\_2$ along the top, this is equivalent to the left hand $\iota_1^*$ being injective. But this follows from the vanishing of $\ext^1_{\pi\_T}(\cO/\cI_1,\cO)$.
%We can express $\rho^*R\hom_\pi(\cI_1,\cI_2)$ as a 3-term complex of vector bundles $E_0\to E_1\to E_3$ and then $C(\iota_1^*\circ\Phi)$ as the 3-term complex
%$$
%\xymatrix@R=0pt{
%E_0 \ar[r]& E_1 \ar[r]& E_2. \\
%\oplus & \oplus \\
%H^1(\cO_S)\otimes\cO_{\cA} \ar[ruu]& H^2(\cO_S)\otimes\cO_{\cA} \ar[ruu]}
%$$
\end{proof}

For brevity we set $Z:=S^{[n_1,n_2]}$. By Lemmas \ref{gw} and \ref{invariant} we can see $\rho^{-1}(Z)$ as the degeneracy locus of any of the four maps
\begin{eqnarray}
\rho^*\sigma_1 \!&\colon& \rho^*E_0\To\rho^*E_1, \label{1} \\
(\rho^*\sigma_1,\psi_1) \!&\colon& \rho^*(E_0\oplus H^1)\To\rho^*E_1, \label{2} \\
\mat{\rho^*\sigma_1}{\psi_1}00 \!\!\!&\colon& \rho^*(E_0\oplus H^1)\To\rho^*(E_1\oplus H^2), \label{3} \\
\sigma \!&\colon& \rho^*(E_0\oplus H^1)\To K,
\label{4}
\end{eqnarray}
where $K:=\ker\!\big(\rho^*(E_1\oplus H^2)\to\rho^*E_2\big)$. These give rise to four different perfect obstruction theories for $\rho^{-1}(Z)$. The one we are interested in is the fourth \eqref{4}, but we will use the third \eqref{3} and the second \eqref{2} to relate this to the first \eqref{1} which has the desirable property that it is $\rho$-invariant: it is pulled back from a perfect obstruction theory on $Z$.
\medskip

By Lemma \ref{invariant} we can write each of (\ref{1}--\ref{4}) as 
the degeneracy locus of a map
$$
s\,\colon\ \rho^*A\To B,
$$
which on restriction to $\rho^{-1}(Z)$ becomes a pullback from $Z$ --- i.e. there exists a bundle $B'$ on $Z$ and $s'\colon A|_Z\to B'$ such that
\beq{pullbk}
B|_{\rho^{-1}(Z)}\cong\rho^*B'\quad\mathrm{and}\quad s|_{\rho^{-1}(Z)}\cong\rho^*s'.
\eeq
Now apply Section \ref{degloc} with $r_0-r=1$ to this. 
We see $\rho^{-1}(Z)$ as being cut out of
$$
\rho^*\PP(A)\ \cong\ \PP(\rho^*A)\rt{q}\cA
$$
by the induced section $\widetilde s$ \eqref{tilda} of $q^*B(1)$, inducing the perfect obstruction theory \eqref{mod3}
\beq{rho1}
\xymatrix@R=18pt@C=35pt{
q^*B^*(-1)\big|_{\rho^{-1}(Z)} \ar[d]_{\widetilde s}\ar[r]^-{d\;\widetilde s}& \Omega\_{\rho^*\PP(A)}\big|_{\rho^{-1}(Z)} \ar@{=}[d] \\
\rho^*(I/I^2) \ar[r]^-d& \Omega\_{\rho^*\PP(A)}\big|_{\rho^{-1}(Z)}\,.\!\!\!}
\eeq
Here $I$ is the ideal of $Z\subset \PP(A)$, so the bottom row is the truncated cotangent complex $\LL_{\rho^{-1}(Z)}$\;.

The bottom arrow factors through $\rho^*\Omega_{\PP(A)}|_{\rho^{-1}(Z)}$, so using \eqref{pullbk} the diagram factors through
\beq{doh}
\xymatrix@R=18pt{
q^*\rho^*(B')^*(-1)\big|_{\rho^{-1}(Z)} \ar[d]_{\widetilde s}\ar[r]^-{d\;\widetilde s}& \rho^*\Omega\_{\PP(A)}\big|_{\rho^{-1}(Z)} \ar@{=}[d] \\
\rho^*(I/I^2) \ar[r]^-d& \rho^*\Omega\_{\PP(A)}\big|_{\rho^{-1}(Z)}\,.\!\!\!}
\eeq
All of the sheaves here are pullbacks by $\rho^*$. Although on $\rho^{-1}(Z)$ the map $s$ is also a pullback \eqref{pullbk}, that does \emph{not} immediately mean that the maps in the above diagram are pulled back --- they use the restriction of $s$ not just to $\rho^{-1}(Z)$ but to its scheme theoretic doubling defined by the ideal $\rho^*I^2$.

However, in the first set-up \eqref{1} the maps clearly are pulled back. Using the second \eqref{2} and third \eqref{3} we will prove the same is true for the fourth \eqref{4}, so that it descends to give a perfect obstruction theory for $Z$ independent of the $(\phi_1,\phi_2)$ choices built into $\cA$.

\begin{prop} \label{rhocom} Using the description \eqref{4} of $\rho^{-1}(Z)$, the resulting diagram \eqref{doh} is $\rho$-invariant: it is the pullback by $\rho^*$ of a perfect obstruction theory $F\udot\to\LL_Z$ for $Z=S^{[n_1,n_2]}$.
\end{prop}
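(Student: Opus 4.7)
The plan is to bridge from description (\ref{1}) --- which is manifestly $\rho$-invariant because $E_0, E_1, \sigma_1$ all live on $S^{[n_1]} \times S^{[n_2]}$, so the induced section $\widetilde s$ of $q^*\rho^*E_1(1)$ on $\PP(\rho^*E_0)$ and its derivative $d\widetilde s$ in \eqref{doh} are $\rho^*$-pullbacks of their analogues on $\PP(E_0)$ --- to description (\ref{4}) via (\ref{2}) and (\ref{3}), checking at each stage that the diagram \eqref{doh} remains a $\rho$-pullback. The structural input is that $\cA \to S^{[n_1]} \times S^{[n_2]}$ is a smooth affine bundle and $\rho^{-1}(Z) = \cA \times_{S^{[n_1]} \times S^{[n_2]}} Z$, so the ideal $\cI$ of $\rho^{-1}(Z) \subset \cA$ equals $\rho^*J$ (with $J$ the ideal of $Z$) and $\cI^2 = \rho^*(J^2)$; hence the scheme-theoretic doubling $2\rho^{-1}(Z)$ that controls $d\widetilde s$ in \eqref{doh} coincides with $\rho^{-1}(2Z)$, and the task reduces to showing the section $\widetilde s$ (equivalently, the map $s$ of (\ref{4})) is a $\rho^*$-pullback modulo $\cI^2$.

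For the step (\ref{1}) $\to$ (\ref{2}), we enlarge the source by $\rho^*H^1$ via $\psi_1$, which is a $\rho^*$-pullback on $\rho^{-1}(Z)$ by Lemma \ref{invariant}. I would strengthen the surjectivity step in the proof of Lemma \ref{invariant} --- namely that $\Hom_\cA(G(-N), F) \to \Hom_{\rho^{-1}(Z)}(G(-N), F)$ is surjective for $N \gg 0$ --- to make it surjective onto $\Hom_{\rho^{-1}(2Z)}(G(-N), F)$, thereby lifting $\psi_1$ to a $\rho^*$-pullback on $\rho^{-1}(2Z)$. For (\ref{2}) $\to$ (\ref{3}), enlarging the target by $\rho^*H^2$ via the zero map contributes only a trivially $\rho$-invariant summand to $d\widetilde s$. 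For (\ref{3}) $\to$ (\ref{4}), the restriction to the subbundle $K = \ker(\rho^*\sigma_2, \psi_2)$ is $\rho$-invariant on $\rho^{-1}(2Z)$ by the same first-order lifting applied to $\psi_2$, so the resulting diagram \eqref{doh} for (\ref{4}) is the $\rho^*$-pullback of the corresponding diagram on $\PP(A')|_Z$ for appropriate bundles $A', B'$ over $Z$.

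The main obstacle is verifying the first-order lift in Steps 1 and 3: Lemma \ref{invariant} gives pullback on $\rho^{-1}(Z)$, but we need pullback on the first-order thickening $\rho^{-1}(2Z)$. The rest is essentially bookkeeping once this infinitesimal lifting is established, and the resulting descended complex on $Z$ defines a perfect obstruction theory $F\udot \to \LL_Z$ because $\rho$ is smooth: the pullback $\rho^*(F\udot \to \LL_Z)$ recovers the pulled-back diagram we just constructed, and smoothness of $\rho$ ensures that the key condition of being a perfect obstruction theory (an isomorphism on $h^0$ and surjection on $h^{-1}$) descends from $\rho^{-1}(Z)$ to $Z$.
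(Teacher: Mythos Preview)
Your overall strategy --- bridging from description \eqref{1} through \eqref{2} and \eqref{3} to \eqref{4} --- matches the paper's. But you have misidentified the crux, and your proposed fix does not work.

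The paper does \emph{not} need $\psi_1,\psi_2$ to be $\rho$-pullbacks on the thickening $\rho^{-1}(2Z)$. Instead it observes that in the perfect obstruction theory diagram for \eqref{2}, the contribution of $\psi_1$ is \emph{zeroth-order}, not first-order. Concretely: $\rho^{-1}(Z)$ sits in $\PP(\rho^*E_0)\subset\PP(\rho^*(E_0\oplus H^1))$, and the splitting $E_0\oplus H^1$ induces a splitting $\Omega_{\PP(\rho^*(E_0\oplus H^1))}|_{\rho^{-1}(Z)}\cong\Omega_{\PP(\rho^*E_0)}|_{\rho^{-1}(Z)}\oplus(H^1)^*(-1)$. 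Under this, the arrow $d\widetilde s$ decomposes as $(\rho^*d\widetilde\sigma_1,\,\psi_1^*)$: the first summand carries all the dependence on the thickening and is manifestly a $\rho$-pullback, while the second summand is the normal derivative in the $H^1$-direction, which is just $\psi_1^*|_{\rho^{-1}(Z)}$ itself --- linear in $\psi_1$, not in $d\psi_1$. So Lemma \ref{invariant} as stated (pullback on $\rho^{-1}(Z)$ only) suffices. The passage \eqref{3}$\to$\eqref{4} then uses only that $K\subset\rho^*(E_1\oplus H^2)$ is $\rho$-invariant on $\rho^{-1}(Z)$, again zeroth-order.

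Your proposed strengthening of Lemma \ref{invariant} would not go through: its proof begins by showing $C(\iota_1^*\Phi)\cong\rho^*C$ on $\rho^{-1}(Z)$ using the tautological inclusion $\iota\colon\cI_1\hookrightarrow\cI_2$ and the canonical lift \eqref{rite}. That inclusion lives on $Z=S^{[n_1,n_2]}$ and does not extend to $2Z$, so there is no quasi-isomorphism on $\rho^{-1}(2Z)$ to homotope, and the surjectivity of restriction maps is beside the point. The paper's route avoids this entirely.
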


\begin{proof}
Applying \eqref{doh} to the first set-up \eqref{1} gives
$$
\xymatrix@R=18pt@C=40pt{
\rho^*q^*E_1^*(-1)\big|_{\rho^{-1}(Z)} \ar[d]_{\rho^*\widetilde\sigma_1}\ar[r]^-{\rho^*d(\widetilde\sigma_1)}& \rho^*\Omega\_{\PP(E_0)}\big|_{\rho^{-1}(Z)} \ar@{=}[d] \\
\rho^*(I/I^2) \ar[r]^-d& \rho^*\Omega\_{\PP(E_0)}\big|_{\rho^{-1}(Z)}\,,\!\!\!}
$$
where $I$ is the ideal of $Z\subset\PP(E_0)$.

Applied instead to the second \eqref{2}, we get the diagram 
\beq{dghell}
\xymatrix@R=18pt@C=50pt{
\rho^*q^*E_1^*(-1)\big|_{\rho^{-1}(Z)} \ar[d]_{\widetilde{(\rho^*\sigma_1,\psi_1)}}\ar[r]^-{d\widetilde{(\rho^*\sigma_1,\psi_1)}}& \rho^*\Omega\_{\PP(E_0\oplus H^1)}\big|_{\rho^{-1}(Z)} \ar@{=}[d] \\
J/J^2 \ar[r]^-d& \rho^*\Omega\_{\PP(E_0\oplus H^1)}\big|_{\rho^{-1}(Z)}\,,\!\!\!}
\eeq
where $J$ is the ideal of $\rho^{-1}(Z)\subset\PP(\rho^*E_0\oplus H^1)$. (Throughout this proof we denote $q^*H^i,\,\rho^*H^i$ and $q^*\rho^*H^i$ simply by $H^i$.) This inclusion factors
$$
\rho^{-1}(Z)\,\subset\,\PP(\rho^*E_0)\,\subset\,\PP(\rho^*E_0\oplus H^1).
$$
The first has conormal sheaf $\rho^*I/I^2$, while the second has conormal bundle $(H^1)^*(-1)$
%, as follows. Pushing $\cO(1)$ down from $\PP(\rho^*E_0\oplus H^1)$ to $\cA$ gives
%$$
%q_*\;\cO(1)\=(\rho^*E_0\oplus H^1)^*.
%$$
%Taking the $(H^1)^*$ summand, pulling back and multiplying by sections of $\cO(-1)$ defines the linear functions on the fibres of $\PP(\rho^*E_0\oplus H^1)$ which vanish on $\PP(\rho^*E_0)$ and generate its ideal. Therefore\footnote{Perhaps an easier way to see this is to recognise $q^*(H^1)^*(-1)$ as the conormal bundle to $\PP(\rho^*E_0)\subset\PP(\rho^*E_0\oplus H^1)$.}
The splitting of $\rho^*E_0\oplus H^1$ induces a splitting
$$
\Omega_{\PP(\rho^*E_0\oplus H^1)}|_{\rho^{-1}(Z)}\cong\Omega_{\PP(\rho^*E_0)}|_{\rho^{-1}(Z)}\oplus H(-1)|_{\rho^{-1}(Z)}
$$
and so
$$
J/J^2\=\rho^*(I/I^2)\oplus(H^1)^*(-1).
$$
When substituted into \eqref{dghell} it becomes
\beq{arss}
\xymatrix@R=18pt@C=40pt{
\rho^*q^*E_1^*(-1)\big|_{\rho^{-1}(Z)} \ar[d]_{(\rho^*\widetilde\sigma_1,\,\psi_1^*)}\ar[r]^-{(\rho^*d\;\widetilde\sigma_1,\,\psi_1^*)}& \rho^*\Omega\_{\PP(E_0)}\big|_{\rho^{-1}(Z)} \oplus(H^1)^*(-1)\ar@{=}[d] \\
\rho^*(I/I^2)\oplus(H^1)^*(-1) \ar[r]^-{(d,\id)}& \rho^*\Omega\_{\PP(E_0)}\big|_{\rho^{-1}(Z)}\oplus(H^1)^*(-1)\,.\!\!\!}
\eeq
The \emph{key point} of this proof is that the above diagram is pulled back by $\rho^*$ from a similar diagram on $Z$. This is clear of all the bundles involved, and also clear of the first summand of the upper and left hand arrows. But these are the only parts of the arrows which depend on the thickening of $\rho^{-1}(Z)$. The other summands $\psi_1^*$ depend only on their restriction to $\rho^{-1}(Z)$, where they are also pull backs by Lemma \ref{invariant}.

So the second degeneracy locus description of $\rho^{-1}(Z)$ \eqref{2} gives rise to a diagram which descends to (a perfect obstruction theory on) $Z$.
For the third description \eqref{3} we add an extra $(H^2)^*(-1)$ summand to the diagram \eqref{arss} with all maps from it zero:
$$\qquad
\xymatrix@R=20pt@C=60pt{
\rho^*q^*(E_1\oplus H^2)^*(-1)\big|_{\rho^{-1}(Z)} \ar[d]_{(\rho^*\widetilde\sigma_1,\,\psi_1^*)}^{\!\oplus\,(0,0)}\ar[r]^-{(\rho^*d\;\widetilde\sigma_1,\,\psi_1^*)\!}_-{\oplus\,(0,0)}& \rho^*\Omega\_{\PP(E_0)}\big|_{\rho^{-1}(Z)} \oplus(H^1)^*(-1)\ar@{=}[d] \\
\rho^*(I/I^2)\oplus(H^1)^*(-1) \ar[r]^-{(d,\id)}& \rho^*\Omega\_{\PP(E_0)}\big|_{\rho^{-1}(Z)}\oplus(H^1)^*(-1).\!\!} \vspace{-17mm}
$$
\beq{bb}
\vspace{1cm}
\eeq
This is therefore also a pullback by $\rho^*$. Finally, since \eqref{tri} is a complex, the map \eqref{3} takes values in $K\subset\rho^*(E_1\oplus H^2)$. Thus the equation cutting out $\rho^{-1}(Z)$ takes values in $q^*K(1)\subset q^*\rho^*(E_1\oplus H^2)(1)$. Therefore the 
upper horizontal and left hand vertical arrows of \eqref{bb} factor through $q^*K^*(-1)$, giving
$$
\xymatrix@R=18pt@C=40pt{
q^*K^*(-1)\big|_{\rho^{-1}(Z)} \ar[d]\ar[r]& \rho^*\Omega\_{\PP(E_0)}\big|_{\rho^{-1}(Z)} \oplus(H^1)^*(-1)\ar@{=}[d] \\
\rho^*(I/I^2)\oplus(H^1)^*(-1) \ar[r]^-{(d,\id)}& \rho^*\Omega\_{\PP(E_0)}\big|_{\rho^{-1}(Z)}\oplus(H^1)^*(-1)\,,\!\!\!} \vspace{-17mm}
$$
\beq{rho2}
\vspace{8mm}
\eeq
which is the diagram \eqref{doh} applied to the fourth degeneracy locus \eqref{4}. 

By Lemma \ref{invariant}, both $K$ and its inclusion into $\rho^*E_1\oplus H^2$ are $\rho$-invariant. Thus the quotient diagram \eqref{rho2} of the diagram \eqref{bb} is also a pull back by $\rho^*$.
\end{proof}

%\begin{rmk}
%We have induced a perfect obstruction theory on $Z$ by descending one down the fibres of $\rho$.
%Another way is to use local sections of $\rho$. Writing these locally as complete intersections, we add more equations to those of $\widetilde\sigma$ cutting out $\rho^{-1}(Z)$. It is easy to show that the resulting perfect obstruction theory on the local section, thought of as a local copy of $Z$, is independent of the choice of section.
%\end{rmk}

\begin{proof}[Proof of Theorem \ref{aim}] Applying \eqref{rho1} (with $A=E_0\oplus H^1$ and $B=K$) to the fourth description \eqref{4} induces a perfect obstruction theory on $\rho^{-1}\big(S^{[n_1,n_2]}\big)$. And diagram \eqref{doh} applied to \eqref{4} gives \eqref{rho2}, which descends --- by Proposition \ref{rhocom} --- to give a compatible perfect obstruction theory on $S^{[n_1,n_2]}$. This compatibility means they satisfy
$$
\rho^*\big[S^{[n_1,n_2]}\big]^{\vir}\=
\big[\rho^{-1}\big(S^{[n_1,n_2]}\big)\big]^{\vir}
\ \in\ A_{\;\dim\cA-k}(\cA).
$$
By Theorem \ref{prop} the second term is $\Delta\;_{r_1-r}^{r_0-r}\big(c(K-(\rho^*E_0\oplus H^1))\big)
$. But the Chern classes of $K-(\rho^*E_0\oplus H^1)$ are the same as those of $\rho^*(-E_0+E_1-E_2)$ and so those of $\rho^*R\hom_\pi(\cI_1,\cI_2)[1]$. Thus
$$
\rho^*\big[S^{[n_1,n_2]}\big]^{\vir}\=\rho^*
\Delta\;_{r_1-r}^{r_0-r}\big(c(R\hom_\pi(\cI_1,\cI_2)[1])\big)\ \in\ A_{\;\dim\cA-k}(\cA).
$$
Here $r_0-r=1$ is the rank of $\ker(\rho^*E_0\to\rho^*E_1)$ over the degeneracy locus. And $r_1-r_0=\rk K-\rk E_0-h^1(\cO_S)=\rk E_1+h^2(\cO_S)-\rk E_2-\rk E_0-h^1(\cO_S)=-\chi(I_1,I_2)+\chi(\cO_S)-1=n_1+n_2-1$, so $r_1-r=n_1+n_2$ and $k=(r_0-r)(r_1-r)=n_1+n_2$. Therefore the above becomes
$$
\rho^*\big[S^{[n_1,n_2]}\big]^{\vir}\=\rho^*\,
c\_{n_1+n_2}(R\hom_\pi(\cI_1,\cI_2)[1])\ \in\ A_{\;\dim\cA-n_1-n_2}(\cA).
$$
But since $\rho$ is an affine bundle,
\beq{krush}
\rho^*\,\colon\,A_{n_1+n_2}(S^{[n_1]}\times S^{[n_2]})\To A_{\;\dim\cA-n_1-n_2}(\cA)
\eeq
is an isomorphism \cite[Corollary 2.5.7]{Kr},
%\cite[Lemma 2.2]{To},
so the result follows.
\end{proof}

Over the degeneracy locus $\rho^{-1}\big(S^{[n_1,n_2]}\big)$, our complex $C(\iota_1^*\Phi)$ has
$$
h^0\=\cO,
$$
trivialised by the inclusion $\iota:\cI_1\into\cI_2$. And $h^1[-1]$ is the cone on
$$
h^0\big(C(\iota_1^*\Phi)\big)\ \cong\ \cO_{\rho^{-1\,}S^{[n_1,n_2]}}\,\rt{h^0}\,C(\iota_1^*\Phi)\big|_{\rho^{-1\,}S^{[n_1,n_2]}}\,.
$$
By Lemma \ref{invariant} and the description \eqref{rite}, this is
\beq{trayce}
R\hom_p(\cI_1,\cI_2)\_0\ :=\ \Cone\!\;\big(Rp_*\;\cO\rt{\iota\cdot\id}R\hom_p(\cI_1,\cI_2)\big),
\eeq
where we recall that $p$ is the basechange of $\pi$ to $S^{[n_1,n_2]}\subset S^{[n_1]}\times S^{[n_2]}$. Thus
\beq{h11}
h^1\=\ext^1_p(\cI_1,\cI_2)\_0.
\eeq
Theorem \ref{prop} shows the
perfect obstruction theory of a degeneracy locus has virtual tangent bundle
$$
T_{\cA}|_{\rho^{-1}(Z)}\To(h^0)^*\otimes h^1.
$$
As in the proof of Theorem \ref{aim} this descends to give our perfect obstruction theory on $Z=S^{[n_1,n_2]}$, yielding the following.

\begin{cor}\label{vtb}
The perfect obstruction theory on $S^{[n_1,n_2]}$ of Theorem \ref{aim} can be written, in the notation of \eqref{trayce}, as
\beq{tun}
\big\{T_{S^{[n_1]}\times S^{[n_2]}}\big|_{S^{[n_1,n_2]}}\to\ext^1_p(\cI_1,\cI_2)\_0\big\}^\vee\To\LL_{S^{[n_1,n_2]}}.
\eeq
\end{cor}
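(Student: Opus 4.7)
The plan is to apply Theorem~\ref{prop} to the degeneracy locus description~\eqref{4} of $\rho^{-1}(Z)\subset\cA$, identify the relevant cohomology sheaves, and descend from $\rho^{-1}(Z)$ to $Z=S^{[n_1,n_2]}$ via Proposition~\ref{rhocom}.

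First I identify $h^0$ and $h^1$ of $C(\iota_1^*\Phi)$ over the degeneracy locus. By Lemma~\ref{invariant} and the canonical lift~\eqref{rite}, $C(\iota_1^*\Phi)|_{\rho^{-1}(Z)}$ is canonically quasi-isomorphic to the $\rho^*$-pullback of the trace-free complex $R\hom_p(\cI_1,\cI_2)\_0$ from~\eqref{trayce}. The trivialisation $\hom_p(\cI_1,\cI_2)\cong\cO$ (generated by $\iota:\cI_1\into\cI_2$) then gives $h^0=\cO$, and~\eqref{h11} gives $h^1=\ext^1_p(\cI_1,\cI_2)\_0$.

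Theorem~\ref{prop} applied to~\eqref{4} yields a perfect obstruction theory on $\rho^{-1}(Z)$ whose virtual tangent bundle is
$$
\big\{T_\cA|_{\rho^{-1}(Z)}\to(h^0)^*\otimes h^1\big\}\=\big\{T_\cA|_{\rho^{-1}(Z)}\to\ext^1_p(\cI_1,\cI_2)\_0\big\}.
$$
By Proposition~\ref{rhocom} the underlying diagram~\eqref{rho2} is a $\rho^*$-pullback, hence descends to a POT on $Z$. Since the representative~\eqref{rho2} uses $\rho^*\Omega_{\PP(E_0\oplus H^1)}|_{\rho^{-1}(Z)}$ (pulled back from the base) rather than the full $\Omega_\cA|_{\rho^{-1}(Z)}$, the descended POT on $Z$ is represented with ambient cotangent $\Omega_{\PP(E_0\oplus H^1)}|_Z$. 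Applying the reduction~\eqref{dg} from the proof of Theorem~\ref{prop} on $Z\subset\PP(E_0\oplus H^1)$, with $h^0=\cO$ collapsing the projective-bundle direction, replaces this by $\Omega_{S^{[n_1]}\times S^{[n_2]}}|_Z$. The resulting POT
$$
\big\{\ext^1_p(\cI_1,\cI_2)\_0^{\,*}\To\Omega_{S^{[n_1]}\times S^{[n_2]}}|_Z\big\}\To\LL_Z
$$
dualises to the claimed~\eqref{tun}.

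The main technical point is that Proposition~\ref{rhocom} furnishes a representative of the POT on $\rho^{-1}(Z)$ whose ambient cotangent is pulled back from the Grassmannian $\PP(E_0\oplus H^1)$ over $B=S^{[n_1]}\times S^{[n_2]}$, rather than involving the affine-bundle directions in $\Omega_\cA$. This $\rho$-invariance is precisely what permits clean descent and then allows~\eqref{dg} to be re-applied on $Z$ itself, trading $T_\cA$ for $T_{S^{[n_1]}\times S^{[n_2]}}$ in the virtual tangent bundle.
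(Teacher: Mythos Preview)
Your proposal is correct and follows essentially the same approach as the paper. The paper's argument is the short paragraph immediately preceding the corollary: it identifies $h^0=\cO$ and $h^1=\ext^1_p(\cI_1,\cI_2)_0$, invokes Theorem~\ref{prop} to write the virtual tangent bundle as $T_{\cA}|_{\rho^{-1}(Z)}\to(h^0)^*\otimes h^1$, and then says this descends via Proposition~\ref{rhocom}; you have spelled out in more detail how the descent replaces $T_{\cA}$ by $T_{S^{[n_1]}\times S^{[n_2]}}$ through the $\rho$-invariant representative \eqref{rho2} and the reduction \eqref{dg}.
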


\section{$k$-step nested Hilbert schemes} \label{kstep}
For $n_1\ge n_2\ge\cdots\ge n_k$, the $k$-step Hilbert scheme
$$
S^{[n_1,n_2,\ldots,n_k]}\ :=\ \big\{I_1\subseteq I_2\subseteq\cdots\subseteq I_k\subseteq\cO_S,\ \mathrm{length}(\cO_S/I_i)=n_i\big\}
$$
can be seen inside $S^{[n_1]}\times\cdots\times S^{[n_k]}$ as the intersection of the $(k-1)$ degeneracy loci
$$
\big\{\!\Hom(I_i,I_{i+1})=\C\big\},\quad i=1,2,\ldots,k-1
$$
where the maps in the complexes $R\hom_\pi(\cI_i,\cI_{i+1})$ drop rank.

So when $H^{\ge1}(\cO_S)=0$ we can employ the exact same method as in Proposition \eqref{nest0}, using $k-1$ sections of tautological bundles on a $(k-1)$-fold fibre product of relative Grassmannians, to describe a perfect obstruction theory, virtual cycle, and product of Thom-Porteous terms to compute its pushforward.

For general $S$, possibly with $H^{\ge1}(\cO_S)\ne0$, we can replace the complexes $R\hom_\pi(\cI_i,\cI_{i+1})$ with their modifications $C(\iota_i^*\circ\Phi_i)$ of \eqref{coney} after pulling back to an affine bundle of splittings. Then we use the same method as in Theorem \ref{aim} to produce the following result. 
We use the projections
\begin{align*}
\pi\,\colon\ S^{[n_1]}\times\cdots\times S^{[n_k]}\times S\ &\To\, S^{[n_1]}\times\cdots\times S^{[n_k]}, \\ p\,\colon\ S^{[n_1,\ldots,n_k]}\times S\ &\To\,S^{[n_1,\ldots,n_k]},
\end{align*}
and, when $I\subset J$, the same $\Ext(I,J)\_0$ notation as in (\ref{trayce}, \ref{h11}).

\begin{thm} \label{nest}
Fix a smooth complex projective surface $S$. Via degeneracy loci the $k$-step nested Hilbert scheme $S^{[n_1,\ldots,n_k]}$ inherits a perfect obstruction theory $F\udot\to\LL_{S^{[n_1,\ldots,n_k]}}$ with virtual tangent bundle
$$
(F\udot)^\vee\ \cong\ \Big\{T_{S^{[n_1]}}\oplus\cdots\oplus T_{S^{[n_k]}}\To\ext^1_p(\cI_1,\cI_2)\_0\oplus\cdots\oplus\ext^1_p(\cI_{k-1},\cI_k)\_0\Big\},
$$
where the arrow is the obvious direct sum of the maps \eqref{tun}. This is isomorphic to the virtual tangent bundle
$$
\Cone\left\{\bigg(\bigoplus_{i=1}^kR\hom_p(\cI_i,\cI_i)\bigg)_{\!0}
\To\bigoplus_{i=1}^{k-1}R\hom_p(\cI_i,\cI_{i+1})\right\}
$$
of the perfect obstruction theory of \cite{GSY1} or Vafa-Witten theory \cite{TT1} when the latter are defined. The pushforward of the resulting virtual cycle
$$
\big[S^{[n_1,\ldots,n_k]}\big]^{\vir}\,\in\,A_{n_1+n_k}\big(S^{[n_1,\ldots,n_k]}\big)
$$
to $S^{[n_1]}\times\cdots\times S^{[n_k]}$ is given by the product
$$
c_{n_1+n_2}\big(R\hom_{\pi}(\cI_1,\cI_2)[1]\big)\cup\cdots\cup c_{n_{k-1}+n_k}\big(R\hom_{\pi}(\cI_{k-1},\cI_k)[1]\big).
$$
\end{thm}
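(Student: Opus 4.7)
The strategy is to iterate the construction of Theorem \ref{aim} across all $k-1$ adjacent pairs simultaneously. For each $i = 1, \ldots, k-1$ we repeat the splitting trick of Section \ref{splitpr}: form an affine bundle $\cA_i \to S^{[n_1]} \times \cdots \times S^{[n_k]}$ parametrising splittings $\Phi_i$ of the analogue of \eqref{dig} for the pair $(\cI_i, \cI_{i+1})$, and set $\rho \colon \cA := \cA_1 \times_{\prod_j S^{[n_j]}} \cdots \times_{\prod_j S^{[n_j]}} \cA_{k-1} \to \prod_j S^{[n_j]}$, still an affine bundle. On $\cA$, each pullback $\rho^* R\hom_\pi(\cI_i, \cI_{i+1})$ carries a canonical modification $C(\iota_i^* \circ \Phi_i)$ of the form \eqref{coney}, which by Lemma \ref{invariant} truncates to a $2$-term complex $\rho^*(E_0^i \oplus H^1) \to K^i$ as in \eqref{EF}--\eqref{Fdef}; moreover, on the preimage of the nested locus it is the pullback of the $i$-th canonical cone \eqref{rite}.

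As in the proof of Theorem \ref{aim}, each pair-wise degeneracy locus $Z^i \subset \cA$ is cut out of $\PP_i := \PP\big(\rho^*(E_0^i \oplus H^1)\big)$ as the zero scheme of a section $\widetilde\sigma_i$ of $q_i^* K^i(1)$. On the fibre product $\cP := \PP_1 \times_\cA \cdots \times_\cA \PP_{k-1}$, the scheme-theoretic intersection $\bigcap_i Z^i$ is therefore the zero locus of the direct-sum section $\bigoplus_i \widetilde\sigma_i$ of $\bigoplus_i q_i^* K^i(1)$. A pair-by-pair application of Lemma \ref{gw} (one pair at a time) identifies this intersection with $\rho^{-1}\big(S^{[n_1,\ldots,n_k]}\big)$. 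The resulting standard perfect obstruction theory \eqref{model2} splits as a direct sum over $i$, and Proposition \ref{rhocom} applied to each summand separately shows it descends along $\rho$ to a perfect obstruction theory $F\udot \to \LL_{S^{[n_1,\ldots,n_k]}}$. Reading off its virtual tangent bundle via Corollary \ref{vtb} summand-by-summand gives the claimed $\big\{\bigoplus_i T_{S^{[n_i]}} \to \bigoplus_i \ext^1_p(\cI_i,\cI_{i+1})\_0\big\}$, where the two copies of $T_{S^{[n_i]}}$ arising for $1 < i < k$ are identified via the diagonal as in the $k=2$ case. Comparison with the Vafa--Witten/reduced local DT virtual tangent bundle is then a diagram chase, using the trace-free triangles $Rp_*\cO \to R\hom_p(\cI_i,\cI_i) \to R\hom_p(\cI_i,\cI_i)\_0$ and the inclusions $\cI_i \into \cI_{i+1}$, exactly analogous to the Remarks after Proposition \ref{nest0}.

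For the pushforward, the Euler class of $\bigoplus_i q_i^* K^i(1)$ on $\cP$ factors as $\prod_i c_{\rk K^i}(q_i^* K^i(1))$; pushing down through each $\PP_i \to \cA$ independently, the $r_0 - r = 1$ form of Proposition \ref{simplah} yields the product $\prod_i c_{n_i + n_{i+1}}\big(\rho^* R\hom_\pi(\cI_i,\cI_{i+1})[1]\big)$ on $\cA$ (the numerical computation $r_1 - r_0 + 1 = n_i + n_{i+1}$ is carried out exactly as at the end of the proof of Theorem \ref{aim}). The affine bundle isomorphism \eqref{krush} applied to $\rho$ then descends this to the asserted formula on $\prod_j S^{[n_j]}$.

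The main obstacle is the scheme-theoretic identification of $\bigcap_i Z^i$ with $\rho^{-1}\big(S^{[n_1,\ldots,n_k]}\big)$: while each $Z^i$ individually matches the $i$-th pair-wise nested locus by Lemma \ref{gw}, a priori the intersection could carry excess structure from non-transversality. Verifying the equality reduces, via Lemma \ref{h0base} applied to the direct-sum complex $\bigoplus_i C(\iota_i^* \Phi_i)$ on $\cP$, to showing that each $h^0$ of the modified complexes evaluates correctly on any test scheme $T \to \cP$; this is the pair-wise injectivity argument from Lemma \ref{gw}, applied one pair at a time and using that the splittings $\Phi_i$ are mutually independent by construction of $\cA$ as a fibre product. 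A subsidiary check is that the summand-wise application of Proposition \ref{rhocom} really gives the direct-sum perfect obstruction theory, not a twisted version --- this will follow from the fact that the analogue of diagram \eqref{rho2} splits strictly as a direct sum, since the affine bundle $\cA$ separates the choices of $\Phi_i$ along orthogonal fibre directions.
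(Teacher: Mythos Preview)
Your construction of the perfect obstruction theory and the pushforward formula is correct and is exactly what the paper sketches in the paragraphs preceding the theorem; the paper does not write this out in any more detail than you do.

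The place where your proposal is too thin is the comparison with the GSY1/Vafa--Witten virtual tangent bundle, which you dismiss as ``a diagram chase \ldots\ exactly analogous to the Remarks after Proposition~\ref{nest0}''. In fact this is the \emph{entire} content of the paper's written proof, and it is not quite routine. The issue is a mismatch of traces: the GSY1 expression has $\big(\bigoplus_{i=1}^k R\hom_p(\cI_i,\cI_i)\big)_{\!0}$, which removes a \emph{single} diagonal copy of $Rp_*\cO$ via $\oplus_i\id$, whereas your direct-sum form $\bigoplus_{i=1}^{k-1} R\hom_p(\cI_i,\cI_{i+1})_0$ removes $k-1$ copies (one for each adjacent pair). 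The triangles you cite, $Rp_*\cO \to R\hom_p(\cI_i,\cI_i) \to R\hom_p(\cI_i,\cI_i)_0$, are the \emph{individual} trace-free triangles and do not by themselves reconcile these. The paper's argument is to map $(Rp_*\cO)^{\oplus k} = Rp_*\cO\otimes\C^k$ into the middle term of the GSY1 diagram, observe it factors through $Rp_*\cO\otimes(\C^k/\C)$, and then choose the explicit basis $(1,0,\ldots,0),(1,1,0,\ldots,0),\ldots,(1,\ldots,1,0)$ of $\C^k/\C$; under this basis the induced map to $\bigoplus_{i=1}^{k-1} R\hom_p(\cI_i,\cI_{i+1})$ becomes exactly $\operatorname{diag}(\iota_1,\ldots,\iota_{k-1})$, so taking cones gives the direct sum of the pairwise trace-free complexes. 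Only then does one use that everything has cohomology concentrated in degree~$1$ to replace complexes by their $h^1$. Your proposal should include this basis argument (or an equivalent one) rather than invoking the $k=2$ Remarks, which contain no such comparison.
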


\begin{rmk} Note that we are not claiming the two perfect obstruction theories are the same, although they undoubtedly are. Proving this would involve identifying the map $F\udot\to\LL$ produced by our degeneracy locus construction with the one induced by Atiyah classes in \cite{GSY2,TT1}. We do not need this because the virtual cycles depend only on the scheme structure of $S^{[n_1,\ldots, n_k]}$ and the K-theory class of $F\udot$.
\end{rmk} \vspace{-3mm}

\begin{proof} All that is left to do is relate the two virtual tangent bundles. The virtual tangent bundle of \cite{GSY1} is the cone on the bottom row of the diagram 
\beq{gsy}
\xymatrix@R=18pt{
Rp_*\;\cO \ar[d]^(.42){\oplus_{i=1}^k\id} \\
\bigoplus_{i=1}^kR\hom_p(\cI_i,\cI_i) \ar[r]\ar[d]& \bigoplus_{i=1}^{k-1}R\hom_p(\cI_i,\cI_{i+1}) \ar@{=}[d] \\
\bigg(\!\bigoplus_{i=1}^kR\hom_p(\cI_i,\cI_i)\bigg)_{\!0} \ar[r]& \bigoplus_{i=1}^{k-1}R\hom_p(\cI_i,\cI_{i+1}).\!\!}
\eeq
Here the left hand column is an exact triangle which defines the term in the lower left corner.
The central horizontal arrow acts on the $j$th summand ($1\le j\le k$) of the left hand side by taking it to $(0,\ldots,0,-i_{j-1}^*,i_j,0,\ldots,0)$ on the right hand side, where $i_j$ appears in the $j$th position and is the canonical map $\cI_j\into \cI_{j+1}$. (For $j=1$ we ignore the $-i_{j-1}^*$ term to get $(i_1,0,\ldots,0)$; for $j=k$ we ignore the $i_j$ term to get $(0,\ldots,0,-i^*_{k-1})$.) This has zero composition with
$\oplus_{i=1}^k\id$, so induces the lower horizontal arrow.

The identity map from $(Rp_*\;\cO)^{\oplus k}=Rp_*\;\cO\otimes\C^k$ to the central left hand term of \eqref{gsy} induces a map from $Rp_*\;\cO\otimes(\C^k/\C)$ to the bottom left hand term, where $\C$ sits in $\C^k$ via $(1,1,\ldots,1)$. Projecting the elements $(1,0,\ldots,0)$, $(1,1,0,\ldots,0),\,\ldots,\,(1,1,\ldots,1,0)$ of $\C^k$ defines a basis in $\C^k/\C$ and so identifies $Rp_*\;\cO\otimes(\C^k/\C)\cong(Rp_*\;\cO)^{\oplus(k-1)}$. Using our description of the central arrow, this identifies the induced map
$$
Rp_*\;\cO\otimes(\C^k/\C)\To\bigoplus_{i=1}^{k-1}R\hom_p(\cI_i,\cI_{i+1})
$$
with
$$
(Rp_*\;\cO)^{\oplus(k-1)}\rt{\operatorname{diag}(i_1,i_2,\cdots,i_{k-1})}\bigoplus_{i=1}^{k-1}R\hom_p(\cI_i,\cI_{i+1}).
$$
Taking the cone on these two maps from $(Rp_*\;\cO)^{\oplus(k-1)}$ to the two entries on the bottom row of \eqref{gsy} shows the bottom row is quasi-isomorphic to
$$
\bigoplus_{i=1}^kR\hom_p(\cI_i,\cI_i)\_0\To\ 
\bigoplus_{i=1}^{k-1}R\hom_p(\cI_i,\cI_{i+1})\_0
$$
in the notation of \eqref{trayce}.
Each of these complexes has cohomology only in degree 1, so the virtual tangent bundle of \cite{GSY1} is the cone on
$$
\bigoplus_{i=1}^k\ext^1_p(\cI_i,\cI_i)\_0\To\ 
\bigoplus_{i=1}^{k-1}\ext^1_p(\cI_i,\cI_{i+1})\_0
$$
in the notation of \eqref{h11}. On the $j$th summand on the left the arrow is $(0,\ldots,0,-i_{j-1}^*,i_j,0,\ldots,0)$. But this is $(F\udot)^\vee$, as required.

In \cite{GSY2} it is shown that the perfect obstruction theory of \cite{GSY1} is a summand of the obstruction theory one gets from localised local DT theory. The piece one has to remove is explained in terms of a more global perfect obstruction theory arising in Vafa-Witten theory in \cite{TT1}.
\end{proof}

\section{Generalised Carlsson-Okounkov vanishing} \label{COvan}

Theorem \ref{aim} expresses $\big[S^{[n_1,n_2]}\big]^{\vir}$ as a degeneracy class. This allows us to give a topological proof of the following result of Carlsson-Okounkov \cite{CO}, which we will then generalise below.

\begin{cor}
Let $S$ be any smooth projective surface. Over $S^{[n_1]}\times S^{[n_2]}$ we have the vanishing
\beq{COzero}
c_{n_1+n_2+i}\big(R\hom_{\pi}(\cI_1,\cI_2)[1]\big)\=0, \qquad i>0.
\eeq
\end{cor}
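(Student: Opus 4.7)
The plan is to deduce the vanishing directly from the higher Thom-Porteous formula (Proposition \ref{simplah}) applied to the degeneracy locus description of $S^{[n_1,n_2]}$ used in the proof of Theorem \ref{aim}. The key observation, already built into the proof of Proposition \ref{nest0} and reused in Section \ref{arb}, is that over $S^{[n_1,n_2]}$ the kernel bundle is the \emph{trivial} line bundle $h^0\cong\cO$, trivialised by the canonical inclusion $\iota\colon\cI_1\into\cI_2$. Since $c_1$ of a trivial line bundle vanishes, all higher terms in the ``higher'' Thom-Porteous formula are forced to be zero.

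More precisely, I would proceed as follows. After pulling back to the affine bundle $\rho\colon\cA\to S^{[n_1]}\times S^{[n_2]}$ of Section \ref{splitpr}, we realise $\rho^{-1}(S^{[n_1,n_2]})$ as the degeneracy locus \eqref{4}
$$
\sigma\colon\ \rho^*(E_0\oplus H^1)\To K,
$$
with $r_0-r=1$ and $r_1-r_0=n_1+n_2-1$, just as in the proof of Theorem \ref{aim}. Proposition \ref{simplah} then gives, for every $i\ge0$,
$$
\iota_*\Big(c_1\big((h^0)^*\big)^i\cap\big[\rho^{-1}(S^{[n_1,n_2]})\big]^{\vir}\Big)\=c_{n_1+n_2+i\;}\big(K-\rho^*(E_0\oplus H^1)\big),
$$
and the right hand side has the same Chern classes as $\rho^*R\hom_\pi(\cI_1,\cI_2)[1]$ by the computation already carried out at the end of the proof of Theorem \ref{aim}.

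The trivialisation $h^0\cong\cO_{\rho^{-1}(S^{[n_1,n_2]})}$ established just before Corollary \ref{vtb} kills $c_1((h^0)^*)^i$ for every $i>0$, so the left hand side vanishes. Hence $\rho^*c_{n_1+n_2+i}\big(R\hom_\pi(\cI_1,\cI_2)[1]\big)=0$ on $\cA$, and since by \eqref{krush} the pullback $\rho^*$ is an isomorphism on Chow groups, the class itself vanishes on $S^{[n_1]}\times S^{[n_2]}$. There is no real obstacle here --- everything has been set up in the previous sections; the only content is to notice that the triviality of $h^0$ makes Proposition \ref{simplah} collapse to the desired vanishing. The same argument, run with $\cI_2$ replaced by $\cI_2\otimes\cL$ and the relevant ambient space enlarged by $\Pic_\beta(S)$, yields the generalised vanishing stated earlier in the introduction.
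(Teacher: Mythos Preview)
Your proof of the corollary is correct and is essentially identical to the paper's own argument: apply the higher Thom-Porteous formula of Proposition~\ref{simplah} to the two-term complex \eqref{4} on $\cA$, use the trivialisation $h^0\cong\cO$ over $\rho^{-1}(S^{[n_1,n_2]})$ to kill the left hand side for $i>0$, observe that the right hand side computes $\rho^*c_{n_1+n_2+i}\big(R\hom_\pi(\cI_1,\cI_2)[1]\big)$, and descend via the isomorphism $\rho^*$ on Chow groups.

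One caveat: your final sentence, claiming that the generalised vanishing of Theorem~\ref{park} follows ``by the same argument'' after twisting by $\cL$ and enlarging by $\Pic_\beta(S)$, is too optimistic. The paper's proof of Theorem~\ref{park} is genuinely more involved: it passes to a \emph{different} degeneracy locus construction over $S^{[n_1]}\times S^{[n_2]}\times S_\beta$ (using the nested Hilbert scheme $S^{[n_1,n_2]}_\beta$ and the universal divisor $\cD$), proves the vanishing there for $\beta\gg0$, and then uses a GRR-type polynomial argument to extend to all $\beta$. Simply replacing $\cI_2$ by $\cI_2\otimes\cL$ in the present setup does not immediately work, because the degeneracy locus of $R\hom_\pi(\cI_1,\cI_2\otimes\cL)$ need not have $h^0$ trivialised in the same way.
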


\begin{proof} 
We apply the higher Thom-Porteous formula \eqref{higher} to our modified complex $C(\iota_1^*\circ\Phi)$ \eqref{coney} on $\cA$. It has degeneracy locus $\rho^{-1}\big(S^{[n_1,n_2]}\big)$, over which $h^0$ is just $\cO$, trivialised by the tautological inclusion $\cI_1\into\cI_2$ over the nested Hilbert scheme. Hence \eqref{higher} gives
$$
c_{r_1-r_0+i+1}\big(C(\iota_1^*\circ\Phi)[1]\big)\=0
$$
for $i>0$, where $r_1-r_0=n_1+n_2-1$.

Since $C(\iota_1^*\circ\Phi)[1]$ only differs from $\rho^*R\hom_{\pi}(\cI_1,\cI_2)[1]$ by some trivial bundles $H^1,\,H^2$, this gives
$$
\rho^*\;c_{n_1+n_2+i}\big(R\hom_{\pi}(\cI_1,\cI_2)[1]\big)\=0.
$$
But $\rho^*\colon A_{n_1+n_2-i}(S^{[n_1]}\times S^{[n_2]})\to A_{\;\dim\cA-n_1-n_2-i}(\cA)$
is an isomorphism \cite[Corollary 2.5.7]{Kr}, which gives the result.
\end{proof}

The rest of this Section is devoted to proving the following generalisation.

\begin{thm} \label{park}
Let $S$ be any smooth projective surface. For any curve class $\beta\in H_2(S,\Z)$, any Poincar\'e line bundle $\cL\to S\times\Pic_{\beta}(S)$, and any $i>0$,
\beq{film}
c_{n_1+n_2+i}\big(\!\;R\pi_*\;\cL-R\hom_{\pi}(\cI_1,\cI_2\otimes\cL)\big)\=0
\eeq
on $S^{[n_1]}\times S^{[n_2]}\times\Pic_\beta(S)$.
\end{thm}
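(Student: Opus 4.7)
The plan is to repeat the argument of the untwisted Carlsson--Okounkov vanishing (the Corollary above), now for the $\cL$-twisted complex $R\hom_\pi(\cI_1,\cI_2\otimes\cL)$ on $B:=S^{[n_1]}\times S^{[n_2]}\times\Pic_\beta(S)$. The splitting construction of Section \ref{splitpr}, with $\cL$ in place of $\cO_S$, produces an affine bundle $\rho\colon\cA_\cL\to B$ and a two-term complex $C_\cL$ of vector bundles on $\cA_\cL$; the proofs of Lemmas \ref{invariant} and \ref{gw} extend verbatim since they only use local splittability and local freeness. The analogue of the map \eqref{ont} remains surjective for the same reason: $\cO/\cI_2$ has codimension two, so $R^2\pi_*(\cI_2\otimes\cL)\cong R^2\pi_*\cL$, and composing with $\iota_1^*$ gives a surjection onto $\ext^2_\pi(\cI_1,\cI_2\otimes\cL)$.

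The resulting $h^0$-jumping locus is $\rho^{-1}(S^{[n_1,n_2]}\times\Pic_\beta)$. Unlike the untwisted case, however, the kernel over this locus is not the trivial line $\cO$ but the tautological bundle $\rho^*\pi_*\cL$: the inclusion $\iota\colon\cI_1\into\cI_2$ gives the canonical isomorphism $\hom_p(\cI_1,\cI_2\otimes\cL)\cong\cL$, whose pushforward is $\pi_*\cL$. To reduce to the rank-one kernel case of Proposition \ref{simplah}, I would pull back once more along the relative projective bundle $q\colon\PP(\rho^*\pi_*\cL)\to\cA_\cL$; the tautological sub-line $\cO(-1)\into q^*\rho^*\pi_*\cL$ then serves as a trivialising rank-one kernel via the Grassmannian construction of Theorem \ref{prop}.

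Applying Proposition \ref{simplah} to the resulting 2-term complex on $\PP(\rho^*\pi_*\cL)$, the vanishing $c_1(\cO(1))^i\cdot[\widetilde Z]^{\vir}$ for $i\ge1$ (after capping with appropriate Chern classes) gives the pullback vanishing of $c_{n_1+n_2+i}$ for our target class, modulo a final manipulation in which the Segre-class pushforwards along $q$ combine with the Chern classes of $R^{\ge1}\pi_*\cL$ (contributed in the first step) to produce the full Chern class $c(R\pi_*\cL)$ via the multiplicativity $c(R\pi_*\cL)=c(\pi_*\cL)\cdot c(R^{\ge1}\pi_*\cL)$. Injectivity of $\rho^*$ and $q^*$ on Chow groups (\cite[Cor.~2.5.7]{Kr} and the projective-bundle formula) transfers the vanishing back to~$B$.

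The main obstacle is the final Chern- and Segre-class bookkeeping on $\PP(\rho^*\pi_*\cL)$: one must confirm that the pushforward of the higher Thom--Porteous identity matches the claimed vanishing of $c_{n_1+n_2+i}(R\pi_*\cL-R\hom_\pi(\cI_1,\cI_2\otimes\cL))$ on $B$. This is conceptually straightforward but requires careful tracking of ranks and degree shifts, especially because the rank of the twisted modified complex $C_\cL[1]$ is $n_1+n_2-h^0(L)$ rather than $n_1+n_2-1$, and it is the further projective-bundle pullback that reconciles this discrepancy with the ambient Chern-class formula.
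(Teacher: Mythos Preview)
There is a genuine gap: your identification of the $h^0$-jumping locus as $\rho^{-1}\big(S^{[n_1,n_2]}\times\Pic_\beta\big)$ is incorrect, and this is where the whole argument breaks. At a point $(I_1,I_2,L)$ the condition $\Hom(I_1,I_2\otimes L)\neq0$ is \emph{not} equivalent to $I_1\subset I_2$. Any nonzero map $I_1\to I_2\otimes L$ has double dual a section $s\in H^0(L)$, and conversely a section $s$ cutting out $D$ gives such a map precisely when $I_1(-D)\subset I_2$. So for instance with $n_1=n_2=1$ and $p\neq q$, one computes $\Hom(\mathfrak m_p,\mathfrak m_q\otimes L)\cong H^0(L\otimes\mathfrak m_q)$, which is nonzero whenever $h^0(L)\ge2$ --- yet $(\mathfrak m_p,\mathfrak m_q)\notin S^{[1,1]}$. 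More seriously, the dimension of $\Hom(I_1,I_2\otimes L)$ jumps as both $(I_1,I_2)$ and $L$ vary, so the kernel of your modified complex is not locally free on any candidate degeneracy locus; in particular $\pi_*\cL$ is not a vector bundle on $\Pic_\beta(S)$ for general $\beta$, so $\PP(\pi_*\cL)$ does not even make sense. The hypothesis $r_0-r=1$ of Proposition~\ref{simplah} is therefore unavailable.

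The paper circumvents this by changing the ambient space. For $\beta\gg0$ it works over $S^{[n_1]}\times S^{[n_2]}\times S_\beta$ --- the Hilbert scheme of \emph{curves}, not $\Pic_\beta$ --- so that a specific divisor $\cD$ (not just its class) is recorded. One then considers $R\hom_\pi(\cI_1(-\cD),\cI_2)$ and kills the unwanted $\pi_*\cO(\cD)$ contribution by the explicit map $\Psi$ of \eqref{sie}; the resulting cone $\Cone(\Psi)[-1]$ is a genuine 2-term complex whose kernel drops by exactly one, canonically trivialised by $s_\cD$, over the nested Hilbert scheme $S^{[n_1,n_2]}_\beta$ of \eqref{nestbeta}. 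Proposition~\ref{simplah} then gives the vanishing \eqref{park2} on $S_\beta$, which one descends to $\Pic_\beta$ by a line-bundle twist and Leray--Hirsch. Finally, the passage from $\beta\gg0$ to arbitrary $\beta$ is a separate Grothendieck--Riemann--Roch argument: the class \eqref{film} is polynomial in $c_1(\cL)$, and vanishing on an open cone of $\beta$ forces vanishing identically. Your proposal misses both of these steps.
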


To prove this we will work with more general nested Hilbert schemes of subschemes $S\supset Z_1\supseteq Z_2$, by allowing $Z_1$ to have dimension $\le1$ instead of just 0. Separating out its divisorial and 0-dimensional parts, we are then led, for $\beta\in H_2(S,\Z)$, to the nested Hilbert scheme $S^{[n_1,n_2]}_{\beta}$. As a set it is
\begin{multline} \label{nestbeta}
S^{[n_1,n_2]}_{\beta}\ :=\ \big\{I_1(-D)\subset I_2\subset\cO_S\ \colon \\\mathrm{length}(\cO_S/I_i)=n_i,\ D\ \mathrm{Cartier\ with}\ [D]=\beta\big\}.
\end{multline}
As a scheme it represents the functor taking schemes $B$ to families of nested ideals $\cI_1(-\cD)\into \cI_2\into\cO_{S\times B}$, flat over $B$. Here $\cD$ is a Cartier divisor, the $\cO_S/\cI_i$ are finite over $B$ of length $n_i$, and --- on restriction to any closed fibre $S_b$ --- $\cD_b$ has class $\beta$ and the maps are still injections.

Setting $\beta=0$ and $n_1\ge n_2$ recovers the punctual nested Hilbert scheme \eqref{nst}. Instead setting $n_1=0=n_2$ gives the Hilbert scheme of curves $S_\beta$, which fibres over $\Pic_\beta(S)\ni L$ with fibres $\PP(H^0(L))$. \medskip

In the sequel \cite{GT2} we will construct a natural perfect obstruction theory and virtual cycle on $S^{[n_1,n_2]}_{\beta}$ for any $\beta$. Here we only sketch a less general construction for classes $\beta\gg0$ since we do not actually need the virtual class, only the degeneracy locus expression, in order to prove Theorem \ref{park}.

\subsection{Another degeneracy locus construction}
So fix $\beta\gg0$ sufficiently positive that $H^{\ge1}(L)=0$ for all $L\in\Pic_\beta(S)$. The Abel-Jacobi map $\AJ\colon S_\beta\to\Pic_\beta(S)$ is then a projective bundle. 
Let $\cD$ be the universal curve in $S_\beta\times S$ (or any basechange thereof) and as usual let $\pi$ denote any projection down $S$. Then
$$
R\hom_\pi(\cI_1(-\cD),\cO) \quad\mathrm{over}\quad S^{[n_1]}\times S^{[n_2]}\times S_\beta
$$
has $h^2=0$. Also $h^0=\pi_*\;\cO(\cD)$ and
$$
h^1\=\ext^1_\pi(\cI_1(-\cD),\cO)\ \cong\ \ext^2_\pi(\cO_{\cZ_1}(-\cD),\cO)\ \cong\ \Big[\big(K_S(-\cD)\big)^{[n_1]}\Big]^*,
$$
with the last isomorphism\footnote{Given any line bundle $L$ on $S$, there is a tautological rank $n_1$ vector bundle $L^{[n_1]}:=\pi_*\big[(\cO_{S^{[n_1]}}\boxtimes L)\otimes\cO_{\cZ_1}\big]$ over $S^{[n_1]}$ whose fibre over $Z_1\in S^{[n_1]}$ is $\Gamma(L|_{Z_1})$. Here we are using the obvious family generalisation applied to the line bundle $K_S(-\cD)$ over $S\times S_\beta$.} given by Serre duality down the fibres of $\pi$.

Thus $R\hom_\pi(\cI_1(-\cD),\cO)$ can be trimmed to a 2-term complex of vector bundles $E_0\to E_1$ sitting in an exact sequence
$$
0\To\pi_*\;\cO(\cD)\To E_0\To E_1\To\Big[\big(K_S(-\cD)\big)^{[n_1]}\Big]^*\To0,
$$
all of whose terms are locally free.

So just as in Section \ref{splitpr} we may work on an affine bundle $\rho\colon\cA\to S^{[n_1]}\times S^{[n_2]}\times S_\beta$ over which this splits canonically, giving an isomorphism
$$
\rho^*R\hom_\pi(\cI_1(-\cD),\cO)\ \cong\ \rho^*\pi_*\;\cO(\cD)\ \oplus\ \rho^*\Big[\big(K_S(-\cD)\big)^{[n_1]}\Big]^*[-1]
$$
which induces the identity on cohomology sheaves.
From now on we shall omit $\rho^*$ from our notation and work as if this splitting holds on $S^{[n_1]}\times S^{[n_2]}\times S_\beta$ since we know that $\rho^*$ induces an isomorphism on Chow groups \eqref{krush}.

In particular we get an induced composition
$$\xymatrix{
R\hom_\pi(\cI_1(-\cD),\cI_2) \ar[r]\ar[rrd]_{\Psi}& R\hom_\pi(\cI_1(-\cD),\cO) \ar[r]& \pi_*\;\cO(\cD) \ar[d] \\ && \frac{\pi_*\;\cO(\cD)}{s_{\cD}\cdot\;\cO}\;,\!}
\vspace{-14mm}
$$
\beq{sie}\vspace{6mm}\eeq
where $s_{\cD}\colon\cO\to\pi_*\;\cO(\cD)$ is induced by adunction from the section $s_{\cD}\colon\pi^*\cO\to\cO(\cD)$  cutting out $\cD$. At a closed point $(I_1,I_2,D)$ of $S^{[n_1]}\times S^{[n_2]}\times S_\beta$, the horizontal composition along the top of \eqref{sie} acts on $h^0$ as follows. It takes a nonzero element of $\Hom(I_1(-D),I_2)$ --- i.e. a point of the nested Hilbert scheme up to scale --- to its divisorial part in $H^0(\cO(D))$; this is injective. The vertical map then compares this to the divisor $D$. Thus $h^0(\Psi)$ has one dimensional kernel $\cO$ (canonically trivialised by $s_{\cD}$) at precisely the points of the nested Hilbert scheme
\beq{sett}
S^{[n_1,n_2]}_\beta\ \stackrel\iota\Into\ S^{[n_1]}\times S^{[n_2]}\times S_\beta,
\eeq
and the kernel is never any bigger. Said differently, the 2-term complex of vector bundles
$$
\Cone(\Psi)[-1]
$$
drops rank by $1$ on the subset \eqref{sett}, and no further. By working very similar to that in Proposition \ref{nest0} one can easily show that \eqref{sett} also describes the degeneracy locus scheme-theoretically, inducing a perfect obstruction theory on $S^{[n_1,n_2]}_\beta$. By the Thom-Porteous formula of Proposition \ref{simplah} the resulting virtual cycle therefore satisfies
$$
\iota_*\big[S^{[n_1,n_2]}_\beta\big]^{\vir}\=
c\_b\big(\!\Cone(\Psi)\big),
$$
where $b=\chi(\Cone(\Psi))+1=n_1+n_2$. More generally, by \eqref{higher},
$$
\iota_*\!\left(\!\;c_1\big((h^0)^*\big)^i\cap\big[S^{[n_1,n_2]}_\beta\big]^{\vir}\right)\=c_{n_1+n_2+i\;}\big(\!\Cone(\Psi)\big).
$$
Since we have already observed that $h^0(\Cone(\Psi)[-1])\cong\cO$ is trivialised by the restriction of $s_{\cD}$ to \eqref{sett}, this gives
\beq{park2}
c_{n_1+n_2+i}\big(R\pi_*\;\cO(\cD)-R\hom_\pi(\cI_1(-\cD),\cI_2)\big)\=0 \quad\text{on}\ S^{[n_1]}\times S^{[n_2]}\times S_\beta
\eeq
for $\beta\gg0$ and all $i>0$. Notice how close this is to the result claimed in Theorem \ref{park}. 
%We need to descend \eqref{park2} from $S_\beta$ to $\Pic_\beta(S)$, and generalise from $\beta\gg0$ to all $\beta\in H_2(S,\Z)$. \medskip
%
%Let $p\colon\Pic_\beta(S)\times S\to\Pic_\beta(S)$ denote the projection, and fix any Poincar\'e line bundle $\cL\to\Pic_\beta(S)\times S$ so that
%\beq{AJ}
%S_\beta\=\PP(p_*\;\cL)\rt{\AJ}\Pic_\beta(S).
%\eeq
%The tautological subbundle
%$$
%\cO_{\PP(p_*\;\cL)}(-1)\Into\AJ^*(p_*\;\cL)
%$$
%composed with (the pullback by $(\AJ\times1)^*$ of) $p^*p_*\;\cL\to\cL$ defines a section of $(\AJ\times1)^*\cL(1)$ cutting out the universal divisor $\cD\subset S_\beta\times S$. Therefore, suppressing the obvious pullback map for clarity, we can identify
%$$
%\cO(\cD)\ \cong\ \cL(1).
%$$

\begin{proof}[Proof of Theorem \ref{park}.]
We want to descend \eqref{park2} from $S_\beta$ to $\Pic_\beta(S)$ and then extend from $\beta\gg0$ to all $\beta\in H_2(S,\Z)$. We will use the formula of \cite[Proposition 1]{Ma},
$$
c_{n+i}(F\otimes M)\=\sum_{j=0}^{n+i}{\rk F-j\choose n+i-j}c_j(F)\;c_1(M)^{n+i-j},
$$
for any perfect complex $F$ and line bundle $M$, using the usual conventions for negative binomial coefficients. Applying this to $F=R\pi_*\;\cO(\cD)-R\hom_\pi(\cI_1(-\cD),\cI_2)$ of rank $n:=n_1+n_2$ gives
\beq{FoM}
c_{n_1+n_2+i}(F\otimes M)\=\sum_{j=n_1+n_2+1}^{n_1+n_2+i}{n_1+n_2-j\choose n_1+n_2+i-j}c_j(F)\;c_1(M)^{n_1+n_2+i-j},
\eeq
because for smaller $j$ the inequalities $n_1+n_2+i-j>n_1+n_2-j\ge0$ force the binomial coefficient to vanish. By the vanishing \eqref{park2} this gives
\beq{M}
c_{n_1+n_2+i}(F\otimes M)\=0
\eeq
for $i>0$ and any line bundle $M$ on $S^{[n_1]}\times S^{[n_2]}\times S_\beta$. For any Poincar\'e line bundle $\cL$ pulled back from $S\times\Pic_\beta(S)$, the line bundle $\cL(-\cD)$ is trivial on each $S$ fibre and is the pullback $\pi^*M$ of a line bundle $M$ on 
$S^{[n_1]}\times S^{[n_2]}\times S_\beta$. (In fact $M=\cO(-1)$ is the tautological bundle if we consider $S_\beta\to\Pic_\beta(S)$ to be the projectivisation of the vector bundle $\pi_*\;\cL$.) Subsituting into \eqref{M} gives
$$
c_{n_1+n_2+i}\big(R\pi_*\;\cL-R\hom_\pi(\cI_1,\cI_2\otimes\cL)\big)\=0
$$
on $S^{[n_1]}\times S^{[n_2]}\times S_\beta$. Since this is pulled back from $S^{[n_1]}\times S^{[n_2]}\times\Pic_\beta(S)$ the Leray-Hirsch theorem shows we have the same vanishing there.
\medskip

So we have proved the vanishing \eqref{film} for $\beta\gg0$, and we need to generalise it to all $\beta\in H_2(S,\Z)$.
%First we use a trick from \cite[Lecture 19]{Mu}. We assume our $\beta\gg0$ is positive enough that $\chi(L)>h^{0,1}(S)=\dim\Pic_\beta(S)$ when $c_1(L)=\beta$. Letting $p\colon\Pic_\beta(S)\times S\to\Pic_\beta(S)$ denote the projection, we have
%\beq{AJ}
%S_\beta\=\PP(p_*\;\cL)\rt{AJ}\Pic_\beta(S),
%\eeq
%for any Poincar\'e line bundle $\cL\to\Pic_\beta(S)\times S$. We will take sections of the Abel-Jacobi fibration \eqref{AJ}. These correspond to line subbundles (not subsheaves!)
%$$
%\cM\Into p_*\;\cL \quad\mathrm{on}\ \Pic_\beta(S).
%$$
%Equivalently, they correspond to nowhere zero sections $$
%\cO_{\Pic_\beta(S)}\To p_*\big(\cL\otimes p^*\cM^{-1}).
%$$
%Since by assumption $\rk p_*\;\cL>\dim\Pic_\beta(S)$, such sections exist as soon as the line bundle $\cM^{-1}$ is sufficiently positive.
%
%So, replacing $\cL$ by $\cL\otimes p^*\cM^{-1}$ for $\cM^{-1}\gg0$ if necessary, we may assume that $p_*\;\cL$ has a section $s$ without zeros. Therefore $\PP(\im s)$ is a section of the Abel-Jacobi fibration \eqref{AJ}. Taking the product with $S$, the section $s$ defines a section of $\cL\to\Pic_\beta(S)\times S$ with zeros
%$$
%\cD\ \cap\ \big(\!\Pic_\beta(S)\times S)\ \subset\ S_\beta\times S.
%$$
%Thus we can identify $\cL$ with the restriction of $\cO(\cD)$ to the section. Pulling back \eqref{park2} by this section then gives
%$$
%c_{n_1+n_2+i}\big(R\pi_*\;\cL-R\hom_\pi(\cI_1,\cI_2\otimes\cL)\big)\=0 \quad\text{on}\ S^{[n_1]}\times S^{[n_2]}\times\Pic_\beta(S).
%$$
%To extend this vanishing to all $\beta$ and $\cL$, we use
We write the left hand side of \eqref{film} on $S^{[n_1]}\times S^{[n_2]}\times\Pic_\beta(S)$ in terms of characteristic classes using the Grothendieck-Riemann-Roch theorem applied to $\pi$. The result is an $H^{2(n_1+n_2+i)}\big(S^{[n_1]}\times S^{[n_2]}\times\Pic_\beta(S)\big)$-valued polynomial expression in the variables 
$$
\xymatrix@R=10pt@C=0pt{
(\beta,\id,\gamma)  \ar@{=}[d]&\in& H^2(S)\ \oplus\ H^1(S)\otimes H^1(S)^*\ \oplus\ H^2(\Pic_\beta(S)) \ar@{=}[d]<-12ex> \\
c_1(\cL) & \in & \hspace{-4cm} H^2(\Pic_\beta(S)\times S).}
$$
We have shown that this polynomial vanishes on an open cone of classes $\beta\gg0$ (for any $\gamma$).
%$\gammma\gg0$ where the curve class is very positive and the Poincar\'e line bundle is twisted by a very positive line bundle over $\Pic_\beta(S)$.
It therefore vanishes for all $\beta$.
\end{proof}

\begin{cor}\label{sero} For any curve class $\beta$, let $\cD \subset S\times S_\beta$ be the universal divisor. Then for $i>0$
$$c_{n_1+n_2+i}\big(R\pi_*\;\cO(\cD)-R\hom_\pi(\cI_1(-\cD),\cI_2)\big)\=0 \quad\text{on}\ S^{[n_1]}\times S^{[n_2]}\times S_\beta.$$
\end{cor}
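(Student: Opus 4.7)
The plan is to deduce Corollary \ref{sero} from Theorem \ref{park} by running the last step of the proof of Theorem \ref{park} in reverse: I pull Theorem \ref{park} back along the Abel--Jacobi map $\AJ\colon S_\beta\to\Pic_\beta(S)$ and then twist by a line bundle on $S_\beta$ to trade $\AJ^*\cL$ for $\cO(\cD)$.

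First I would identify the two line bundles $\cO(\cD)$ and $\AJ^*\cL$ on $S\times S_\beta$ up to a pullback. Both restrict to $\cO_S(D)$ on each fibre $S\times\{D\}$, so by the seesaw principle
\[
\cO(\cD)\ \cong\ \AJ^*\cL\otimes\pi^*N
\]
for some line bundle $N$ on $S_\beta$ (up to the further pullback to $S^{[n_1]}\!\times\!S^{[n_2]}\!\times\!S_\beta$). Combining the identity $R\hom(A\otimes L,B)\cong R\hom(A,B)\otimes L^{-1}$ with the projection formula for $\pi^*N$, one then obtains
\[
R\pi_*\;\cO(\cD)\,-\,R\hom_\pi(\cI_1(-\cD),\cI_2)\ \cong\ \big[R\pi_*(\AJ^*\cL)\,-\,R\hom_\pi(\cI_1,\cI_2\otimes\AJ^*\cL)\big]\otimes N.
\]
Call the bracketed factor $F$, of rank $n:=n_1+n_2$. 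Pulling back Theorem \ref{park} along $\id\times\AJ$ (using flat base change to commute pullback with $R\hom_\pi$) gives $c_{n+j}(F)=0$ for every $j>0$.

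Finally I would conclude by the Macdonald formula \cite[Proposition 1]{Ma} already used in the proof of Theorem \ref{park}:
\[
c_{n+i}(F\otimes N)\=\sum_{j=0}^{n+i}\binom{n-j}{n+i-j}\,c_j(F)\,c_1(N)^{n+i-j}.
\]
For $j\le n$ the binomial coefficient vanishes because $n+i-j>n-j\ge0$; for $j>n$ the class $c_j(F)$ vanishes by the previous step. Hence every summand is zero and $c_{n+i}(F\otimes N)=0$, which is exactly the statement of the corollary. No serious obstacle arises because Theorem \ref{park} is already available for arbitrary $\beta$; the only delicate point is keeping straight which twists acquire $L$ versus $L^{-1}$ in the identification of Step~2, but once that is verified the rest is purely formal.
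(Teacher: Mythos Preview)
Your proof is correct and follows essentially the same route as the paper: the paper also pulls back Theorem \ref{park} along the Abel--Jacobi map, identifies $\cO(\cD)\cong\AJ^*\cL\otimes\pi^*N$ (citing \cite[Lemma 2.15]{DKO} to get $N=\cO_{\PP^*}(1)$ explicitly, where your seesaw argument gives an unspecified $N$), and then applies the Manivel formula \eqref{FoM}. One small remark: the base change you invoke need not be flat --- $\AJ$ is generally not flat --- but since the complexes involved are perfect and $\pi$ is proper, $R\pi_*$ and $R\hom_\pi$ commute with arbitrary base change, which is all you need for the Chern class identity.
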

\begin{proof}
By \cite[Lem 2.15]{DKO} we can identify the Hilbert scheme $S_\beta$ with the projective cone $\PP^*(R^2\pi_* \cL^*(K_S))$ of quotient line bundles of $R^2\pi_* \cL^*(K_S)$, in  such a way that its natural projection to $\Pic_\beta(S)$ is given by the Abel-Jacobi morphism, and  $\cO(\cD)\cong \AJ^*\cL \otimes \cO_{\PP^*}(1)$ over $S\times S_\beta$. Now substitute
$$
F:=R\pi_*\;\cL-R\hom_{\pi}(\cI_1,\cI_2\otimes\cL),\qquad M:=\cO_{\PP^*}(1)
$$
over $S^{[n_1]}\times S^{[n_2]}\times S_\beta$ into \eqref{FoM}. Each of the terms on the right hand side vanishes for any $\beta$ by Theorem \ref{park}.
\end{proof}

\begin{rmk}
This result suggests that $R\pi_*\;\cO(\cD)-R\hom_\pi(\cI_1,\cI_2(\cD))$ has the same K-theory class as an honest vector bundle of rank $n_1+n_2$ on $S^{[n_1]}\times S^{[n_2]}\times S_\beta$. We show in \cite[Equation 4.27]{GT2} that this is actually true after we pull back an affine bundle over $S^{[n_1]}\times S^{[n_2]}\times S_\beta$. Therefore its higher Chern classes are zero after pulling back to this affine bundle. Since this pullback is an isomorphism on Chow groups \cite[Corollary 2.5.7]{Kr}, this gives another explanation for the vanishing of Corollary \ref{sero}.

Aravind Asok kindly pointed out that it is possible that any bundle on the affine bundle is pulled back from the base; this would prove $R\pi_*\;\cO(\cD)-R\hom_\pi(\cI_1,\cI_2(\cD))$ is represented by a bundle on $S^{[n_1]}\times S^{[n_2]}\times S_\beta$.
\end{rmk}

\section{Alternative approach to the virtual cycle using $\Jac(S)$}\label{jack}

Instead of removing $H^1(\cO_S)$ by hand, as we did in Section \ref{arb}, we can do it geometrically by replacing the moduli space $S^{[n]}$ of ideal sheaves by the moduli space $S^{[n]}\times\Jac(S)$ of rank 1 torsion free sheaves.

Let $\cL$ be a Poincar\'e line bundle over $S\times\Jac(S)$, and let
$$
\cL_1,\,\cL_2\To\big[S^{[n_1]}\times\Jac(S)\big]\times\big[S^{[n_2]}\times\Jac(S)\big]\times S
$$
be $\pi_{25}^*\;\cL$ and $\pi_{45}^*\;\cL$ respectively, where $\pi_{ij}$ is projection to the product of the $i$th and $j$th factors.

Then the degeneracy locus of the 2-term complex\footnote{It is only 2-term if $p_g(S)=0$. If $p_g(S)>0$ then we can pull back to an affine bundle where $H^2(\cO_S)$ splits off, as in Section \ref{splitpr}.}
\beq{2turm}
R\hom_{\pi}(\cI_1\otimes\cL_1,\cI_2\otimes\cL_2)
\eeq
is
$$
S^{[n_1,n_2]}\times\Jac(S)\ \subset\ \big[S^{[n_1]}\times\Jac(S)\big]\times\big[S^{[n_2]}\times\Jac(S)\big],
$$
where the map is the product of the usual inclusion $S^{[n_1,n_2]}\subset S^{[n_1]}\times S^{[n_2]}$ with the diagonal map $\Jac(S)\subset\Jac(S)\times\Jac(S)$.

Therefore, just as in Sections \ref{degloc} and \ref{00},
$S^{[n_1,n_2]}\times\Jac(S)$ inherits a perfect obstruction theory
$$
\big(\ext^1_p(\cI_1,\cI_2)\big)^*\To\Omega_{S^{[n_1]}\times\Jac(S)\times S^{[n_2]}\times\Jac(S)}\big|_{S^{[n_1,n_2]}\times\Jac(S)}
$$
(note the $\cL_i$ cancel over the diagonal $\Jac(S)$). And the resulting virtual cycle, pushed forward to 
$S^{[n_1]}\times\Jac(S)\times S^{[n_2]}\times\Jac(S)$, is
$$
c\_{n_1+n_2+g}\big(R\hom_\pi(\cI_1\otimes\cL_1,\cI_2\otimes\cL_2)\big),\qquad g:=h^{0,1}(S).
$$
Everything so far has been invariant under the obvious diagonal action of $\Jac(S)$. Taking a slice by pulling back to $\{\cO_S\}\times\Jac(S)\subset\Jac(S)\times\Jac(S)$ gives the following.

\begin{prop} \label{night} There is a perfect obstruction theory
\beq{mapp}
\big(\ext^1_p(\cI_1,\cI_2)\big)^*\To\Omega_{S^{[n_1]}\times S^{[n_2]}\times\Jac(S)}\big|_{S^{[n_1,n_2]}\times\{\cO_S\}}
\eeq
on $S^{[n_1,n_2]}$. The push forward of the resulting virtual cycle
$$
\big[S^{[n_1,n_2]}\big]^{\vir}\,\in\,A_{n_1+n_2}\big(S^{[n_1,n_2]}\big)
$$
to $S^{[n_1]}\times S^{[n_2]}\times\Jac(S)$ is
\beq{PD}
c_{n_1+n_2+g}\big(R\hom_{\pi}(\cI_1,\cI_2\otimes\cL)[1]\big). \vspace{-6mm}
\eeq
\ \hfill$\square$
\end{prop}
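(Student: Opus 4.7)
The plan is to apply Theorem \ref{prop} directly to the 2-term complex $R\hom_\pi(\cI_1, \cI_2\otimes\cL)$ on the smooth ambient $A := S^{[n_1]}\times S^{[n_2]}\times\Jac(S)$, adapting the proof of Proposition \ref{nest0}. The $\Jac(S)$-equivariant construction on the larger ambient $A'$ sketched just above motivates this formulation, but arguing directly on $A$ avoids dealing with the slice. First assume $p_g(S) = 0$: then Serre duality gives $\Ext^2(I_1, I_2\otimes L) = 0$ at every closed point $(I_1, I_2, L) \in A$, so $R\hom_\pi(\cI_1, \cI_2\otimes\cL)$ is a 2-term complex of vector bundles by basechange and Nakayama.

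At a closed point, $\Hom(I_1, I_2\otimes L) = \C$ precisely when $L = \cO_S$ and $I_1\subseteq I_2$, and is $0$ otherwise. The maximal $h^0$-jumping locus is therefore $S^{[n_1,n_2]}\times\{\cO_S\} \cong S^{[n_1,n_2]}$, with $h^0$ canonically trivialised by the tautological inclusion $\cI_1 \into \cI_2$. Lemma \ref{h0base}, applied exactly as in the proof of Proposition \ref{nest0}, upgrades this to a scheme-theoretic identification. Theorem \ref{prop} then produces the perfect obstruction theory \eqref{mapp}: the $\cL$ twist becomes trivial on the slice $L = \cO_S$, so $h^1$ is $\ext^1_p(\cI_1, \cI_2)$. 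The pushforward is computed by Proposition \ref{simplah} with $r_0 - r = 1$ and $r_1 - r_0 + 1 = -\chi(I_1, I_2\otimes L) + 1 = n_1 + n_2 + g$ (obtained via Riemann--Roch, using $\chi(L) = \chi(\cO_S) = 1 - g$ since $L$ has degree zero and $p_g = 0$), giving \eqref{PD}. The virtual dimension $(2n_1 + 2n_2 + g) - (n_1 + n_2 + g) = n_1 + n_2$ matches.

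For $p_g(S) > 0$ the complex acquires a three-term truncation, and the footnote directs us to the splitting trick of Section \ref{splitpr}. The surjection $H^2(\cO_S)\otimes\cO_A[-2] \to \ext^2_\pi(\cI_1, \cI_2\otimes\cL)$ (the $\cL$-twisted analogue of \eqref{ont}) lifts through $R\hom_\pi(\cI_1, \cI_2\otimes\cL)$ after pulling back to an affine bundle $\rho\colon\cA \to A$ of splittings; taking cones produces a 2-term complex $C$ on $\cA$ with $h^0$-jumping locus $\rho^{-1}(S^{[n_1,n_2]})$ (Lemma \ref{gw} analogue) and Chern classes differing from those of $\rho^*R\hom_\pi(\cI_1, \cI_2\otimes\cL)[1]$ only by trivial $H^2$ summands. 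Applying the previous step to $C$ on $\cA$ and descending via the $\rho$-invariance argument of Proposition \ref{rhocom} yields the perfect obstruction theory on $S^{[n_1,n_2]}$, and the Chow-group isomorphism $\rho^*$ of \cite[Cor.~2.5.7]{Kr} recovers \eqref{PD}. The main obstacle is precisely this $p_g > 0$ step: replacing $S^{[n_i]}$ by $S^{[n_i]}\times\Jac(S)$ has geometrised away the $H^1(\cO_S)$ contribution, but $H^2(\cO_S)$ still requires the full apparatus of Section \ref{splitpr} (canonical lift over the nested locus, $\rho$-invariance of the obstruction theory under the choice of splitting), now twisted by a Poincar\'e bundle; verifying that each step survives the extra decorations is the bulk of the work.
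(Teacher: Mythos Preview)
Your proposal is correct and essentially the same as the paper's argument, with one cosmetic difference: the paper first sets up the degeneracy locus of $R\hom_\pi(\cI_1\otimes\cL_1,\cI_2\otimes\cL_2)$ on the doubled ambient $\big[S^{[n_1]}\times\Jac(S)\big]\times\big[S^{[n_2]}\times\Jac(S)\big]$, observes the diagonal $\Jac(S)$-equivariance, and then slices to $\{\cO_S\}\times\Jac(S)$, whereas you work directly on the slice $A=S^{[n_1]}\times S^{[n_2]}\times\Jac(S)$ from the start. Since restricting to that slice makes $\cL_1$ trivial and identifies the complex with your $R\hom_\pi(\cI_1,\cI_2\otimes\cL)$, the two constructions coincide after the slice; your route simply avoids justifying that the degeneracy-locus perfect obstruction theory is compatible with restriction to the slice (which follows from Lemma~\ref{h0base} anyway). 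Your handling of the $p_g>0$ case via the splitting trick of Section~\ref{splitpr} is exactly what the paper's footnote prescribes.
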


%\begin{rmk} Using the affine bundle of Section \ref{arb} to split off $H^2(\cO_S)$ only, this result can also be extended verbatim to the case $p_g(S)>0$. \end{rmk}

\begin{rmk} The canonical section
$$
\cO\To\hom(\cI_1,\cI_2)\To R\hom(\cI_1,\cI_2)
$$
over $S^{[n_1,n_2]}\times S$ gives
\beq{mapq}
R^1p_*\;\cO\To\ext^1_p(\cI_1,\cI_2).
\eeq
Dualising gives
$$
\big(\ext^1_p(\cI_1,\cI_2)\big)^*\To H^1(\cO_S)^*\otimes\cO_{S^{[n_1,n_2]}}\ \cong\ \Omega_{\Jac(S)}.
$$
One can show that this map is the projection of \eqref{mapp} to $\Omega_{\Jac(S)}$.

So letting $\ext^1_p(\cI_1,\cI_2)_0$ denote the cokernel of the injection \eqref{mapq}, we can simplify the perfect obstruction theory \eqref{mapp} to
$$
\big(\ext^1_p(\cI_1,\cI_2)\_0\big)^*\To\Omega_{S^{[n_1]}\times S^{[n_2]}}\big|_{S^{[n_1,n_2]}},
$$
recovering the one of Section \ref{arb} by Corollary \ref{vtb}.
\end{rmk}

\begin{rmk}
The degeneracy locus $S^{[n_1,n_2]}$ of Proposition \ref{night} lies in
\beq{innn}
S^{[n_1]}\times S^{[n_2]}\times\{\cO_S\}\ \stackrel j\Into\  S^{[n_1]}\times S^{[n_2]}\times\Jac(S),
\eeq
and \eqref{PD} gives an expression for the pushforward of the virtual cycle to the right hand side of \eqref{innn}. It would be nice to deduce a similar expression for the pushforward of the virtual cycle to the left hand side of \eqref{innn} (as we managed in Theorem \ref{aim} using the \emph{ad hoc} method of Section \ref{splitpr} to remove $H^1(\cO_S)$). The more geometric method of this Section does not seem to give such an expression directly. But we \emph{can} deduce it from \eqref{PD} if we use
 the generalised Carlsson-Okounkov vanishing result of Theorem \ref{park}. This allows us to write
\begin{multline} \label{mult}
c_{n_1+n_2+g}\big(R\hom_{\pi}(\cI_1,\cI_2\otimes\cL)[1]\big)\= \\
c_g\big(R\pi_*\;\cL[1]\big)\cdot
c_{n_1+n_2}\big(R\pi_*\;\cL-R\hom_{\pi}(\cI_1,\cI_2\otimes\cL)\big)
\end{multline}
on $S^{[n_1]}\times S^{[n_2]}\times\Jac(S)$, because the higher Chern classes of $R\pi_*\;\cL-R\hom_{\pi}(\cI_1,\cI_2\otimes\cL)$ vanish. (The lower Chern classes do not feature because they are multiplied by
$c_{>g}\big(R\pi_*(\cL)\big)$ which are pulled back from $\Jac(S)$ of dimension $g$ and so are zero.) 

Setting $n_1=0=n_2$ in \eqref{PD} shows $c_g\big(R\pi_*\;\cL[1]\big)$ is Poincar\'e dual to the origin $\cO_S\in\Jac(S)$ (all multiplied by $S^{[n_1]}\times S^{[n_2]}$). Since $\cL$ and $R\pi_*\;\cL$ become trivial on this locus, the right hand side of \eqref{mult} becomes
$$
j_*\,c_{n_1+n_2}\big(R\hom_{\pi}(\cI_1,\cI_2)[1]\big),
$$
using the pushforward map \eqref{innn}. Combined again with \eqref{PD} this recovers the result of Theorem \ref{aim}, that the virtual cycle's pushforward to $S^{[n_1]}\times S^{[n_2]}$ is $c_{n_1+n_2}\big(R\hom_{\pi}(\cI_1,\cI_2)[1]\big)$. This argument would only not be circular, however, if we could prove the generalised Carlsson-Okounkov vanishing of Theorem \ref{park} without using Theorem \ref{aim}.
\end{rmk}

\bibliographystyle{halphanum}
\bibliography{references}

\bigskip \noindent {\tt{amingh@math.umd.edu}} \medskip

\noindent Department of Mathematics \\
\noindent University of Maryland \\
College Park, MD 20742 \\
USA

\bigskip \noindent {\tt{richard.thomas@imperial.ac.uk}} \medskip

\noindent Department of Mathematics \\
\noindent Imperial College London\\
\noindent London SW7 2AZ \\
\noindent UK

\end{document}